\newcommand{\Xcomment}[1]{}
\newtheorem{theorem}{Theorem}[section]
\newtheorem{lemma}[theorem]{Lemma}
\newtheorem{corollary}[theorem]{Corollary}
\newtheorem{prop}[theorem]{Proposition}
\makeatletter \@addtoreset{equation}{section} \makeatother
\newenvironment{proof}{\noindent{\bf Proof}\/}%
{\hfill$\qed$\medskip}
\def\qed{ \ \vrule width.1cm height.3cm depth0cm}
\newenvironment{numitem}{\refstepcounter{equation}\begin{enumerate}%
\item[(\thesection.\arabic{equation})]$\quad$}{\end{enumerate}}
\newenvironment{numitem1}{\refstepcounter{equation}\begin{enumerate}%
\item[(\thesection.\arabic{equation})]}{\end{enumerate}}
\newcommand{\refeq}[1]{(\ref{eq:#1})}  % reference to equation
\def\rest#1{_{\,\vrule height 1.6ex width 0.05em depth 0pt\, #1}}
\renewcommand{\section}{\@startsection{section}{1}{0pt}%
{-3.5ex plus -1ex minus -.2ex}{2.3ex plus .2ex}%
{\normalfont\Large}}
\renewcommand{\subsection}{\@startsection{subsection}{2}{0pt}%
{-3.0ex plus -1ex minus -.2ex}{1.5ex plus .2ex}%
{\normalfont\normalsize\bf}}
\def\Rset{{\mathbb R}}
\def\Nset{{\mathbb N}}
\def\Zset{{\mathbb Z}}
\def\Ascr{{\cal A}}
\def\Bscr{{\cal B}}
\def\Cscr{{\cal C}}
\def\Dscr{{\cal D}}
\def\Escr{{\cal E}}
\def\Iscr{{\cal I}}
\def\Kscr{{\cal K}}
\def\Mscr{{\cal M}}
\def\Pscr{{\cal P}}
\def\Rscr{{\cal R}}
\def\frakF{{\mathfrak F}}
\def\frakL{{\mathfrak L}}
\def\frakS{{\mathfrak S}}
\def\tilde{\widetilde}
\def\hat{\widehat}
\def\bar{\overline}
\def\deltain{\delta^{\rm in}}
\def\deltaout{\delta^{\rm out}}
\def\Zin{Z^{\rm in}}
\def\Zout{Z^{\rm out}}
\def\Mlh{M^{\rm lh}}
\def\Muh{M^{\rm uh}}
\def\Mlhmax{M^{\rm lh}_{\rm max}}
\def\Muhmax{M^{\rm uh}_{\rm max}}
\def\Mver{M^{\rm vert}}
\def\Succ{{\rm Succ}}
\def\ISucc{{\rm ISucc}}
\def\Vodd{{\rm V^{\rm odd}}}
\def\Veven{{\rm V^{\rm even}}}
\def\odivide{/}
\begin{document}

 \begin{center}
{\large\bf Planar flows and quadratic relations over semirings%
\footnote[1]{Supported by RFBR grant 10-01-9311-CNRSL\_\,a.}}
 \end{center}

 \begin{center}
{\sc Vladimir~I.~Danilov}\footnote[2] {Central Institute of Economics and
Mathematics of the RAS, 47, Nakhimovskii Prospect, 117418 Moscow, Russia; emails:
danilov@cemi.rssi.ru (V.I.~Danilov); koshevoy@cemi.rssi.ru (G.A.~Koshevoy).},
{\sc Alexander~V.~Karzanov}\footnote[3]{Institute for System Analysis of the
RAS, 9, Prospect 60 Let Oktyabrya, 117312 Moscow, Russia; email:
sasha@cs.isa.ru. Corresponding author.},
{\sc Gleb~A.~Koshevoy}$^2$
\end{center}

% \begin{center}
% 22.08.2011
% \end{center}

%\maketitle

 \begin{quote}
 {\bf Abstract.} \small
Adapting Lindstr\"om's well-known construction, we consider a wide class of
functions which are generated by flows in a planar acyclic directed graph whose
vertices (or edges) take weights in an arbitrary commutative semiring. We give
a combinatorial description for the set of ``universal'' quadratic relations
valid for such functions. Their specializations to particular semirings involve
plenty of known quadratic relations for minors of matrices (e.g., Pl\"ucker
relations) and the tropical counterparts of such relations. Also some
applications and related topics are discussed.
 \medskip

{\em Keywords}\,: Pl\"ucker relation, Dodgson condensation, tropicalization,
semiring, planar graph, network flow, Lindstr\"om's lemma, Schur function,
Laurent phenomenon

\medskip
{\em AMS Subject Classification}\, 05C75, 05E99
  \end{quote}

\parskip=3pt

%----------------------- Sec. 1

\section{\Large Introduction}  \label{sec:intr}

In this paper we consider functions which take values in a commutative semiring
and are generated by planar flows. Functions of this sort satisfy plenty of
quadratic relations, and our goal is to describe a combinatorial method to
reveal and prove such relations. One important class consists of quadratic
relations of Pl\"ucker type.

Recall some basic facts concerning Pl\"ucker algebra and Pl\"ucker coordinates.
For a positive integer $n$, let $[n]$ denote the set $\{1,2,\ldots,n\}$.
Consider the $n\times n$ matrix $\bf x$ of indeterminates $x_{ij}$ and the
corresponding commutative polynomial ring $\Zset[\bf x]$. Also consider the
polynomial ring $\Zset[\Delta]$ generated by variables $\Delta_S$ indexed by
the subsets $S\subseteq [n]$. They are linked by the natural ring homomorphism
$\psi:\Zset[\Delta]\to\Zset[\bf x]$ that brings each variable $\Delta_S$ to the
flag minor polynomial for $S$, i.e., to the determinant of the submatrix
$\bf{x}_S$ formed by the column set $S$ and the row set $\{1,\ldots,|S|\}$ of
$\bf x$. An important fact is that the ideal ${\rm ker}(\psi)$ of
$\Zset[\Delta]$ is generated by homogeneous quadrics, each being a certain
integer combination of products $\Delta_S\Delta_{S'}$. They correspond to
quadratic relations on the Pl\"ucker coordinates of an invertible $n\times n$
matrix (regarded as a point of the corresponding flag manifold embedded in an
appropriate projective space); for a survey see, e.g.,~\cite[Ch.~14]{MS}.

There are many quadratic Pl\"ucker relations on flag minors of a matrix whose
entries are assumed to belong to an arbitrary commutative ring $\mathfrak{R}$
(the case $\mathfrak{R}=\Rset$ or $\mathbb{C}$ is most popular). Let $f(S)$
denote the flag minor with a column set $S$ in this matrix.

The simplest examples of Pl\"ucker relations involve triples: for any three
elements $i<j<k$ in $[n]$ and any subset $X\subseteq[n]-\{i,j,k\}$, the flag
minor function $f:2^{[n]}\to \mathfrak{R}$ of an $n\times n$ matrix satisfies
   \begin{equation} \label{eq:AP3}
    f(Xik)f(Xj)-f(Xij)f(Xk)-f(Xjk)f(Xi)=0,
    \end{equation}
where for brevity we write $Xi'\ldots j'$ for $X\cup\{i',\ldots,j'\}$. We
call~\refeq{AP3} the \emph{AP3-relation} (abbreviating ``algebraic Pl\"ucker
relation with triples").  Another well-known special case (in particular,
encountered in a characterization of Grassmannians) involves quadruples
$i<j<k<\ell$; this is of the form
   \begin{equation} \label{eq:AP4}
    f(Xik)f(Xj\ell)-f(Xij)f(Xk\ell)-f(Xi\ell)f(Xjk)=0.
    \end{equation}

A general algebraic Pl\"ucker relation on flag minors of a matrix can be
written as
  \begin{equation} \label{eq:a_pluck}
  \sum_{A\in\Ascr} f(X\cup A)\,f(X\cup(Y-A))-
     \sum_{B\in\Bscr} f(X\cup B)\,f(X\cup(Y-B))=0.
  \end{equation}
Here $X$ and $Y$ are disjoint subsets of $[n]$, and $\Ascr$ and $\Bscr$ are
certain families of $p$-element subsets of $Y$, for some $p$.

In fact, an instance of~\refeq{a_pluck} (such as~\refeq{AP3} or~\refeq{AP4})
represents a class of relations of ``the same type''. More precisely, let
$m:=|Y|$ and define $\gamma_Y$ to be the order preserving bijective map $[m]\to
Y$, i.e., $\gamma_Y(i)<\gamma_Y(j)$ for $i<j$. This gives the families
$\Ascr_0,\Bscr_0$ of $p$-element subsets of the initial interval $[m]$ such
that $\Ascr=\{\gamma_Y(C)\,\colon C\in\Ascr_0\}$ and
$\Bscr=\{\gamma_Y(C)\,\colon C\in\Bscr_0\}$. We call $\Ascr_0$ the
\emph{pattern} of $\Ascr$ and write $\Ascr=\gamma_Y(\Ascr_0)$, and similarly
for $\Bscr_0$ and $\Bscr$. When considering a class of functions $f:2^{[n]}\to
\Rscr$ and speaking of~\refeq{a_pluck} as a ``universal'' (or ``stable'')
relation, we require that~\refeq{a_pluck} be valid for all functions within
this class and depend only on $m$, $p$, and the patterns $\Ascr_0,\Bscr_0$, but
not on $X$ and $Y$. Namely,~\refeq{a_pluck} should hold for any choice of
disjoint $X,Y\subseteq[n]$ with $|Y|=m$ and for the corresponding families
$\Ascr:=\gamma_Y(\Ascr_0)$ and $\Bscr:=\gamma_Y(\Bscr_0)$.

In particular,~\refeq{a_pluck} turns into~\refeq{AP3} when $m=3$, $p=2$,
$\Ascr_0=\{13\}$, $\Bscr_0=\{12,23\}$ and $Y=\{i,j,k\}$, and turns
into~\refeq{AP4} when $m=4$ $p=2$, $\Ascr_0=\{13\}$, $\Bscr_0=\{12,14\}$ and
$Y=\{i,j,k,\ell\}$.

An important fact established by Lindstr\"om~\cite{Li} is that the minors of
many matrices can be expressed in terms of flows in a planar graph. A certain
flow model will play a key role in our description; next we specify the
terminology and notation that we use. (A more general flow model yielding a
generalization of Lindstr\"om's result is given in~\cite{Po,Ta}.)

By a \emph{planar network} we mean a finite directed planar \emph{acyclic}
graph $G=(V,E)$ in which two subsets $S=\{s_1,\ldots,s_n\}$ and
$T=\{t_1,\ldots,t_{n'}\}$ of vertices are distinguished, called the sets of
\emph{sources} and \emph{sinks}, respectively. We assume that these vertices,
also called \emph{terminals}, lie in the \emph{boundary} of a compact convex
region in the plane, which we denote by $O$ and sometimes conditionally call a
``circumference'', and the remaining part of the graph lies inside $O$. The
terminals appear in $O$ in the cyclic order $s_n,\ldots,s_1,t_1,\ldots,t_{n'}$
clockwise (with possibly $s_1=t_1$ or $s_n=t_{n'}$), and for convenience we say
that the sources and sinks lie in the ``lower'' and ``upper'' halves of $O$,
respectively, and that the indices in each set grow ``from left to right''.

Two important particular cases are: the (square) \emph{grid} $\Gamma_{n,n'}$
and the \emph{half-grid} $\Gamma^\triangle_n$, where the vertices in the former
are the integer points $(i,j)\in\Rset^2$ with $1\le i\le n$, $1\le j\le n'$,
the vertices in the latter are the integer points $(i,j)$ with $1\le j\le i\le
n$, and the edges in both cases are all possible ordered pairs of the form
$((i,j),(i-1,j))$ or $((i,j),(i,j+1))$. The sources are the points
$s_i:=(i,1)$, whereas the sinks are the points $t_j:=(1,j)$ in the former case,
and the $t_j:=(j,j)$ in the latter case. The graphs $\Gamma_{5,4}$ and
$\Gamma^\triangle_4$ are illustrated in Fig.~\ref{fig:Gamma}.

\begin{figure}[htb]
 \begin{center}
 \unitlength=0.8mm
\begin{picture}(120,35)
  \put(0,-5){\begin{picture}(50,35)   % left
 \put(0,5){\circle{2}}
 \put(10,5){\circle{2}}
 \put(20,5){\circle{2}}
 \put(30,5){\circle{2}}
 \put(40,5){\circle{2}}
 \put(0,15){\circle{2}}
 \put(10,15){\circle*{1.7}}
 \put(20,15){\circle*{1.7}}
 \put(30,15){\circle*{1.7}}
 \put(40,15){\circle*{1.7}}
 \put(0,25){\circle{2}}
 \put(10,25){\circle*{1.7}}
 \put(20,25){\circle*{1.7}}
 \put(30,25){\circle*{1.7}}
 \put(40,25){\circle*{1.7}}
 \put(0,35){\circle{2}}
 \put(10,35){\circle*{1.7}}
 \put(20,35){\circle*{1.7}}
 \put(30,35){\circle*{1.7}}
 \put(40,35){\circle*{1.7}}
 \put(39,5){\vector(-1,0){8}}
 \put(29,5){\vector(-1,0){8}}
 \put(19,5){\vector(-1,0){8}}
 \put(9,5){\vector(-1,0){8}}
 \put(39,15){\vector(-1,0){8}}
 \put(29,15){\vector(-1,0){8}}
 \put(19,15){\vector(-1,0){8}}
 \put(9,15){\vector(-1,0){8}}
 \put(39,25){\vector(-1,0){8}}
 \put(29,25){\vector(-1,0){8}}
 \put(19,25){\vector(-1,0){8}}
 \put(9,25){\vector(-1,0){8}}
 \put(39,35){\vector(-1,0){8}}
 \put(29,35){\vector(-1,0){8}}
 \put(19,35){\vector(-1,0){8}}
 \put(9,35){\vector(-1,0){8}}
 \put(40,6){\vector(0,1){8}}
 \put(40,16){\vector(0,1){8}}
 \put(40,26){\vector(0,1){8}}
 \put(30,6){\vector(0,1){8}}
 \put(30,16){\vector(0,1){8}}
 \put(30,26){\vector(0,1){8}}
 \put(20,6){\vector(0,1){8}}
 \put(20,16){\vector(0,1){8}}
 \put(20,26){\vector(0,1){8}}
 \put(10,6){\vector(0,1){8}}
 \put(10,16){\vector(0,1){8}}
 \put(10,26){\vector(0,1){8}}
 \put(0,6){\vector(0,1){8}}
 \put(0,16){\vector(0,1){8}}
 \put(0,26){\vector(0,1){8}}
 \put(0,1){\makebox(0,0)[cc]{$s_1$}}
 \put(10,1){\makebox(0,0)[cc]{$s_2$}}
 \put(20,1){\makebox(0,0)[cc]{$s_3$}}
 \put(30,1){\makebox(0,0)[cc]{$s_4$}}
 \put(40,1){\makebox(0,0)[cc]{$s_5$}}
 \put(-4,7){\makebox(0,0)[cc]{$t_1$}}
 \put(-4,17){\makebox(0,0)[cc]{$t_2$}}
 \put(-4,27){\makebox(0,0)[cc]{$t_3$}}
 \put(-4,37){\makebox(0,0)[cc]{$t_4$}}
   \end{picture}}
   \put(80,-5){\begin{picture}(40,35)  %  -right
 \put(0,5){\circle{2}}
 \put(10,5){\circle{2}}
 \put(20,5){\circle{2}}
 \put(30,5){\circle{2}}
 \put(10,15){\circle{2}}
 \put(20,15){\circle*{1.7}}
 \put(30,15){\circle*{1.7}}
 \put(20,25){\circle{2}}
 \put(30,25){\circle*{1.7}}
 \put(30,35){\circle{2}}
 \put(29,5){\vector(-1,0){8}}
 \put(19,5){\vector(-1,0){8}}
 \put(9,5){\vector(-1,0){8}}
 \put(29,15){\vector(-1,0){8}}
 \put(19,15){\vector(-1,0){8}}
 \put(29,25){\vector(-1,0){8}}
 \put(30,6){\vector(0,1){8}}
 \put(30,16){\vector(0,1){8}}
 \put(30,26){\vector(0,1){8}}
 \put(20,6){\vector(0,1){8}}
 \put(20,16){\vector(0,1){8}}
 \put(10,6){\vector(0,1){8}}
 \put(0,1){\makebox(0,0)[cc]{$s_1$}}
 \put(10,1){\makebox(0,0)[cc]{$s_2$}}
 \put(20,1){\makebox(0,0)[cc]{$s_3$}}
 \put(30,1){\makebox(0,0)[cc]{$s_4$}}
 \put(-4,7){\makebox(0,0)[cc]{$t_1$}}
 \put(7,17){\makebox(0,0)[cc]{$t_2$}}
 \put(17,27){\makebox(0,0)[cc]{$t_3$}}
 \put(27,37){\makebox(0,0)[cc]{$t_4$}}
   \end{picture}}
     \end{picture}
  \end{center}
\caption{The grid $\Gamma_{5,4}$ (left) and the half-grid $\Gamma^\triangle_4$
(right).} \label{fig:Gamma}
  \end{figure}

In what follows, the collection of pairs $(I\subseteq[n],I'\subseteq[n'])$ with
equal sizes: $|I|=|I'|$, is denoted by $\Escr^{n,n'}$.  By an
$(I|I')$-\emph{flow} we mean a collection $\phi$ of \emph{pairwise (vertex)
disjoint directed paths} in $G$ going from the source set $S_I:=\{s_i\,\colon
i\in I\}$ to the sink set $T_{I'}:=\{t_j\,\colon j\in I'\}$. The set of
$(I|I')$-flows in $G$ is denoted by $\Phi_{I|I'}^G$, or simply by
$\Phi_{I|I'}$.

Let $w:V\to\mathfrak{R}$ be a \emph{weighting} on the vertices of $G$
(alternatively, one can consider a weighting on the edges; see the end of this
section). We associate to $w$ the function $f=f_w$ on $\Escr^{n,n'}$ defined by
  \begin{equation} \label{eq:alg_f}
  f(I|I'):=\sum\nolimits_{\phi\in\Phi_{I|I'}}\prod\nolimits_{v\in V_\phi} w(v),
  \qquad (I,I')\in\Escr^{n,n'},
  \end{equation}
where $V_\phi$ is the set of vertices occurring in a flow $\phi$. (When $G$ has
no flow for some $(I,I')$, we set $f(I|I'):=0$.) We refer to $f$ obtained in
this way as an \emph{algebraic flow-generated function}, or an
\emph{AFG-function} for short.

When an $(I|I')$-flow $\phi$ enters the first $|I|=:k$ sinks (i.e., $I'=[k]$),
we say that $\phi$ is a \emph{flag flow} for $I$. Accordingly, we use the
abbreviated notation $\Phi_I$ for $\Phi_{I|\,[k]}$, and $f_w(I)$ for
$f_w(I|\,[k])$. When we are interested in the flag case only, $f_w$ is regarded
as a function on the set $2^{[n]}$ of subsets of $[n]$.

Lindstr\"om~\cite{Li} showed that if $M$ is the $n'\times n$ matrix whose
entries $m_{ji}$ are defined as $\sum_{\phi\in\Phi_{\{i\}|\{j\}}}\prod_{v\in
V_\phi} w(v)$, then for any $(I,I')\in\Escr^{n,n'}$, the minor of $M$ with the
column set $I$ and the row set $I'$ is equal to the value $f(I|I')$ as
in~\refeq{alg_f}. A converse property is known to be valid for the totally
nonnegative matrices (see~\cite{Br}): the minors of such a matrix can be
expressed as above via flows for some planar network and weighting. (Recall
that a real matrix is called \emph{totally nonnegative} (\emph{totally
positive}) if all minors in it are nonnegative (resp., positive).) Moreover, we
show in the Appendix that a similar property holds for any matrix over a field.

Another important application of the flow model concerns tropical analogues of
the above quadratic relations. In this case the flow-generated function $f=f_w$
determined by a weighting $w$ on $V$ is defined by
   \begin{equation} \label{eq:trop_f}
  f(I| I'):=\max_{\phi\in\Phi_{I|I'}}\left(\sum\nolimits_{v\in V_\phi} w(v)\right),
  \qquad (I,I')\in\Escr^{n,n'}.
  \end{equation}
Here $w$ is assumed to take values in a totally ordered abelian group
$\mathfrak{L}$ (usually one deals with $\mathfrak{L}=\Rset$ or $\Zset$). The
expression for $f$ in~\refeq{trop_f} is the tropicalization of that
in~\refeq{alg_f}, and $f$ is said to be a \emph{tropical flow-generated
function}, or a \emph{TFG-function}. Some appealing properties of such
functions and related objects in the flag flow case are demonstrated
in~\cite{DKK1,DKK2} (where real-valued tropical functions are considered but
everywhere $\Rset$ can be replaced by $\mathfrak{L}$). In particular, a
TFG-function $f$ satisfies the tropical analog of~\refeq{AP3}, or the
\emph{TP3-relation}: for $i<j<k$ and $X\subseteq[n]-\{i,j,k\}$,
   \begin{equation} \label{eq:TP3}
    f(Xik)+f(Xj)=\max\{f(Xij)+f(Xk),f(Xjk)+f(Xi)\}.
    \end{equation}

In this paper we combine both cases, the algebraic and tropical ones, by
considering functions taking values in an arbitrary \emph{commutative semiring}
$\mathfrak{S}$, a set equipped with two associative and commutative binary
operations $\oplus$ (addition) and $\odot$ (multiplication) satisfying the
distributive law $a\odot(b\oplus c)=(a\odot b)\oplus(a\odot c)$. Sometimes we
will assume that $\mathfrak{S}$ contains neutral elements $\underline 0$ (for
addition) and/or $\underline 1$ (for multiplication). Two special cases are:
(i) a commutative ring (in which case $\underline 0\in\mathfrak{S}$ and each
element has an additive inverse); (ii) a commutative semiring with division (in
which case $\underline 1\in\mathfrak{S}$ and each element has a multiplicative
inverse). Examples of~(ii) are: the set $\Rset_{>0}$ of positive reals (with
$\oplus=+$ and $\odot=\cdot$), and the above-mentioned tropicalization of a
totally ordered abelian group $\mathfrak{L}$, denoted as $\mathfrak{L}^{\rm
trop}$ (with $\oplus=\max$ and $\odot=+$). The set $2\Zset_{>0}$ of positive
even integers (with usual addition and multiplication) gives an example of a
commutative semiring having neither $\underline 0$ nor $\underline 1$.

Extending~\refeq{alg_f} and~\refeq{trop_f}, the flow-generated function $f=f_w$
determined by a weighting $w:V\to\mathfrak{S}$ is defined by
  $$
  f(I| I'):=\bigoplus\nolimits_{\phi\in\Phi_{I|I'}} w(\phi),
  \qquad (I,I')\in\Escr^{n,n'},
  $$
where $w(\phi)$ denotes the weight $\odot(w(v)\,\colon v\in V_\phi)$ of a flow
$\phi$. We call $f$ an \emph{FG-function} (abbreviating ``flow-generated
function''), and denote the set of these functions by ${\bf
FG}_{n,n'}(\mathfrak{S})$. \medskip

\noindent\textbf{Remark 1.} Note that an $(I|I')$-flow in $G$ may not exist,
making $f(I|I')$ undefined if $\mathfrak{S}$ does not contain $\underline 0$
(e.g., in the tropical case). To overcome this trouble, we may formally extend
$\mathfrak{S}$, when needed, by adding an ``extra neutral'' element $\ast$,
setting $\ast\oplus a=a$ and $\ast\odot a=\ast$ for all $a\in \mathfrak{S}$. In
the extended semiring $\mathfrak{\hat S}$, one defines $f(I|I'):=\ast$ whenever
$\Phi_{I|I'}=\emptyset$.
\medskip

As before, we write $f(I)$ for $f(I|\,[|I|])$ in the flag flow case. Then a
direct analogue of the general Pl\"ucker relation~\refeq{a_pluck} for
$\mathfrak{S}$ is viewed as
  \begin{equation} \label{eq:S_pluck}
  \bigoplus_{A\in\Ascr} \left(f(X\cup A)\odot
      f(X\cup(Y-A))\right)
 =\bigoplus_{B\in\Bscr} \left(f(X\cup B)\odot
   f(X\cup(Y-B))\right).
  \end{equation}

\noindent \textbf{Definition 1.} Let $p<m\le n$ and let $\Ascr_0,\Bscr_0$ be
two families of $p$-element subsets of $[m]$. If~\refeq{S_pluck} (with $f$
determined as above) holds for any commutative semiring $\mathfrak{S}$, acyclic
directed graph $G$, weighting $w$, disjoint subsets $X$ and $Y$, and the
families $\Ascr:=\gamma_Y(\Ascr_0)$ and $\Bscr:=\gamma_Y(\Bscr_0)$, then we
call~\refeq{S_pluck} a \emph{stable quadratic relation of Pl\"ucker type}, or a
\emph{PSQ-relation} for short, and say that it is induced by the patterns
$\Ascr_0,\Bscr_0$. \medskip

Note that in general we admit that $\Ascr_0$ or $\Bscr_0$ can contain multiple
members. In other words, one may assume that for $m,p$ fixed, the pairs of
patterns inducing PSQ-relations constitute an abelian group under the
operations $(\Ascr_0,\Bscr_0)+(\Ascr'_0,\Bscr'_0):=(\Ascr_0\sqcup \Ascr'_0,
\Bscr_0\sqcup \Bscr'_0)$ and
$(\Ascr_0,\Bscr_0)-(\Ascr'_0,\Bscr'_0):=(\Ascr_0\sqcup \Bscr'_0, \Bscr_0\sqcup
\Ascr'_0)$, where $\sqcup$ denotes the disjoint set union). For this reason, we
will write $\Ascr_0\Subset \binom{[m]}{p}$ and $\Ascr\Subset \binom{Y}{p}$
(with symbol $\Subset$ rather than $\subseteq$), and similarly for $\Bscr_0$
and $\Bscr$.
\smallskip

Next, a Pl\"ucker relation on minors of a matrix deals with flag minors and is
{\em homogeneous}, in the sense that the pairs of minor sizes in all products
are the same. However, there are quadratic relations involving non-flag and
non-homogeneous minors. One relation of this sort is expressed by Dodgson's
condensation formula~\cite{Do}:
  \begin{equation} \label{eq:Dodg_orig}
  f(iX|\,i'X')\,f(Xk|X'k')=
  f(iXk|\,i'X'k')\,f(X|X')+f(iX|X'k')\,f(Xk|\,i'X'),
  \end{equation}
where $f(I|I')$ stands for the minor of a matrix with the column set $I$ and
the row set $I'$, ~$k-i=k'-i'>0$, ~$X$ is the interval $[i+1..k-1]$ (from $i+1$
to $k-1$) and $X'$ is the interval $[i'+1..k'-1]$.

This inspires a study of a larger class of quadratic relations on
flow-generated functions over commutative semirings. Now for $\mathfrak{S},G,w$
as before, we deal with the function $f=f_w$ on $\Escr^{n,n'}$, and consider
disjoint $X,Y\subseteq[n]$ and disjoint $X',Y'\subseteq[n']$. An identity of
our interest is of the form
  \begin{multline} \label{eq:gen_sq}
  \bigoplus\nolimits_{(A,A')\in\Ascr} \left(f(XA|X'A')\odot
    f(X\bar A | X'\bar A\,')\right)     \\
 =\bigoplus\nolimits_{(B,B')\in\Bscr} \left(f(XB |
  X'B')\odot
   f(X\bar B| X'\bar B\,')\right).
  \end{multline}
Here, to simplify notation, we write $KL$ for the union $K\cup L$ of disjoint
sets $K,L$, denote the complement $Y-C$ of $C\subseteq Y$ by $\bar C$, and the
complement $Y'-C'$ of $C'\subseteq Y'$ by $\bar C\,'$. The families
$\Ascr,\Bscr$ consist of certain pairs $(C\subseteq Y,C'\subseteq Y')$
(admitting multiplicities). As before, we are interested in ``universal''
relations, and for this reason, consider the patterns $\Ascr_0,\Bscr_0$ formed
by the pairs $(A_0\subseteq[m],B_0\subseteq[m'])$ such that
$\Ascr=\gamma_{Y,Y'}(\Ascr_0)$ and $\Bscr=\gamma_{Y,Y'}(\Bscr_0)$, where
$m:=|Y|$, ~$m':=|Y'|$, and $\gamma_{Y,Y'}$ is the bi-component order preserving
bijective map of $[m]\sqcup[m']$ to $Y\sqcup Y'$.
 %(Note that there are simple interrelations
 %between the sizes of sets occurring in~\refeq{gen_sq}; see~\refeq{proper}.)
Observe that~\refeq{S_pluck} is a special case of~\refeq{gen_sq} with
$X'=\{1,2,\ldots,|X|+r\}$ and $Y'=\{|X|+r+1,\ldots,|X|+m-r\}$, where
$r:=\min\{p,m-p\}$.
\medskip

\noindent \textbf{Definition~2.} When~\refeq{gen_sq} holds for fixed
$\Ascr_0,\Bscr_0$ as above and any corresponding $\mathfrak{S},G,w,X,Y,X',Y'$
and the families $\Ascr:=\gamma_{Y,Y'}(\Ascr_0)$ and
$\Bscr:=\gamma_{Y,Y'}(\Bscr_0)$, we call~\refeq{gen_sq} a (general)
\emph{stable quadratic relation}, or an \emph{SQ-relation}, and say that it is
induced by the patterns $\Ascr_0,\Bscr_0$.\medskip

To distinguish between the general and Pl\"ucker cases, we will refer to
$\Ascr,\Bscr$ in Definition~2 as \emph{2-families}, and to $\Ascr_0,\Bscr_0$ as
\emph{2-patterns}, whereas analogous objects in Definition~1 will be called
\emph{1-families} and \emph{1-patterns}. \smallskip

The goal of this paper is to describe a relatively simple combinatorial method
of characterizing the patterns $\Ascr_0,\Bscr_0$ inducing SQ-relations (in
particular, PSQ-relations). In fact, our method generalizes a flow rearranging
approach used in~\cite{DKK1} for proving the TP3-relation for TFG-functions.
The method consists in reducing to a certain combinatorial problem, and as a
consequence, provides an ``efficient'' procedure to recognize whether or not a
pair $\Ascr,\Bscr$ of 2-families yields an SQ-relation.

The main result obtained on this way is roughly as follows. We associate to a
pair $(C\subseteq[m],C'\subseteq[m'])$ a certain set $\Mscr(C,C')$ of perfect
matchings on $[m]\sqcup[m']$. Given a pair $\Ascr_0,\Bscr_0$ of 2-patterns for
$m,m'$, define $\Mscr(\Ascr_0)$ to be the collection of such matchings over all
members of $\Ascr_0$ (counting multiplicities), and define $\Mscr(\Bscr_0)$ in
a similar way. We say that $\Ascr_0,\Bscr_0$ are \emph{balanced} if the
families $\Mscr(\Ascr_0)$ and $\Mscr(\Bscr_0)$ are equal, and show
(Theorem~\ref{tm:main}) that \smallskip

\emph{2-patterns $\Ascr_0,\Bscr_0$ induce an SQ-relation if and only if they
are balanced.}
 \smallskip

Our approach to handling flows and reducing the problem to examining certain
collections of matchings is close in essence to a lattice paths method
elaborated in Fulmek and Kleber~\cite{FK} and Fulmek~\cite{Flm} to generate
quadratic identities on Schur functions. The latter method is based on the
Gessel--Viennot interpretation~\cite{GV} of semistandard Young tableaux by use
of ``flows'' in a special directed graph, and~\cite{Flm,FK} give sufficient
conditions on quadratic identities for Schur functions, formulated just in
terms of relations on matchings. \smallskip

The paper is organized as follows. Section~\ref{sec:flow} describes properties
of certain pairs of flows, called \emph{double flows}, which lie in the
background of the method. Section~\ref{sec:balan} states the main result
(Theorem~\ref{tm:main}) and proves sufficiency, claiming that all balanced
families $\Ascr,\Bscr$ give SQ-relations. Section~\ref{sec:relat} is devoted to
illustrations of the method; it demonstrates a number of examples of
SQ-relations, including rather wide classes (a majority concerns the flag flow
case). Section~\ref{sec:(i)-(ii)} proves the other direction of
Theorem~\ref{tm:main}, which is more intricate. Moreover, we show that if
2-patterns $\Ascr_0,\Bscr_0$ are not balanced, then for any corresponding
$X,Y,X',Y'$, one can construct a planar network $G$ with integer weights $w$
such that the FG-function $f_w$ violates relation~\refeq{gen_sq}. As a
consequence, \emph{for $\Ascr,\Bscr$ fixed, validity of~\refeq{gen_sq} for all
commutative semirings $\frakS$ is equivalent to its validity for
$\frakS=\Zset$.} (This matches the so-called \emph{transfer principle} for
semirings; see, e.g., \cite[Sec.~3]{AGG}.) Section~\ref{sec:Schur} is devoted
to applications to Schur functions. Section~\ref{sec:laurent} contains a short
discussion on nice additional properties (namely, the existence and an explicit
construction of a so-called basis for $\bf{FG}_{n,n'}(\frakS)$, the Laurent
phenomenon for FG-functions, and some others) in the case when $\frakS$ is a
commutative semiring \emph{with division}; this extends corresponding results
from~\cite{DKK1}. This section is concluded with a (rather routine) proof of
the assertion that the function of minors of any $n'\times n$ matrix $A$ over a
commutative ring obeys all SQ-relations concerning $n,n'$
(Proposition~\ref{pr:matrix-SQ}). Finally, in the Appendix we consider an
arbitrary matrix $M$ over a field and explain how to construct a pair $(G,w)$
of which flows generate the function $f$ of minors of $M$ (thus showing that
$f$ is an FG-function).
 \medskip

We have mentioned above that, instead of a weighting on the vertices of a graph
$G$ in question, one can consider a weighting on the edges. However, this does
not affect the problem and our results in essence. When an edge $e$ is endowed
with a weight, one can split $e$ into two edges in series and transfer the
weight into the intermediate vertex, yielding an equivalent flow model (up to
assigning the weight to each old vertex to be the ``neutral element for
multiplication''). Throughout the paper (except for Section~\ref{sec:Schur}) we
prefer to deal with a weighting on vertices for technical reasons.

%----------------------- Sec.2

\section{\Large Flows and double flows}  \label{sec:flow}

As before, let $G=(V,E)$ be an (acyclic) planar network with sources
$s_1,\ldots,s_n$ and sinks $t_1,\ldots,t_{n'}$. In this section we describe
ideas and tools behind the method of constructing 2-patterns $\Ascr_0,\Bscr_0$
that ensure validity of~\refeq{gen_sq} for all flow-generated functions $f=f_w$
on $\Escr^{n,n'}$ determined by weightings $w:V\to \mathfrak{S}$, where
$\mathfrak{S}$ is an arbitrary commutative semiring.

First of all we specify some terminology and notation. By a \emph{path} in a
digraph (directed graph) we mean a sequence $P=(v_0,e_1,v_1,\ldots,e_k,v_k)$
such that each $e_i$ is an edge connecting vertices $v_{i-1},v_i$. An edge
$e_i$ is called \emph{forward} if it is directed from $v_{i-1}$ to $v_i$,
denoted as $e_i=(v_{i-1},v_i)$, and \emph{ backward} otherwise (when
$e_i=(v_i,v_{i-1})$). The path $P$ is called {\em directed} if it has no
backward edge, and {\em simple} if all vertices $v_i$ are distinct. When $k>0$,
$v_0=v_k$, and all $v_1,\ldots,v_k$ are distinct, ~$P$ is called a \emph{simple
cycle}, or a \emph{circuit}. The sets of vertices and edges of $P$ are denoted
by $V_P$ and $E_P$, respectively.

Consider an $(I|I')$-flow $\phi$ in $G$, where $(I,I')\in\Escr^{n,n'}$. It
consists of pairwise disjoint directed paths going from the source set $S_I$ to
the sink set $T_{I'}$. Since $G$ is acyclic, these paths are simple, and in
view of the ordering of sources and sinks in the boundary $O$, the path in
$\phi$ beginning at $i$th source in $S_I$ enters $i$th sink in $T_{I'}$
(counting ``from left to right''). Equivalently (when $s_1\ne t_1$ and $s_n\ne
t_{n'}$) we may think of $\phi$ as an induced subgraph of $G$ satisfying:
$\deltaout_\phi(s_i)=1$ and $\deltain_\phi(s_i)=0$ if $i\in I$;
$\deltaout_\phi(t_j)=0$ and $\deltain_\phi(t_j)=1$ if $j\in I'$; and
$\deltaout_\phi(v)=\deltain_\phi(v)\in\{0,1\}$ for the other vertices $v$ of
$G$. Here $\deltaout_\phi(v)$ (resp., $\deltain_\phi(v)$) denotes the number of
edges in $\phi$ leaving (resp., entering) a vertex $v$. Also we denote
$\deltaout_\phi(v)+\deltain_\phi(v)$ by $\delta_\phi(v)$.

Our approach is based on examining certain pairs of flows in $G$ and
rearranging them to form some other pairs. To simplify technical details, it is
convenient to modify the original network $G$ as follows. Let us split each
vertex $v\in V$ into two vertices $v',v''$ (placing them in a small
neighborhood of $v$ in the plane) and connect them by edge $e_v=(v',v'')$,
called a \emph{split-edge}. Each edge $(u,v)$ of $G$ is replaced by an edge
going from $u''$ to $v'$; we call it an \emph{ordinary} edge. Also for each
$s_i\in S$, we add a new source $\hat s_i$ and the edge $(\hat s_i,s'_i)$, and
for each $t_j\in T$, add a new sink $\hat t_j$ and the edge $(t''_j,\hat t_j)$;
we refer to such edges as \emph{extra} ones. The picture illustrates the
transformation for the half-grid $\Gamma^\triangle_3$.

 \begin{center}
 \unitlength=1.0mm
 %\linethickness{0.4pt}
  \begin{picture}(120,40)
                         %              left
   \put(0,5){\begin{picture}(35,28)
 \put(10,5){\circle{2}}
 \put(20,5){\circle{2}}
 \put(30,5){\circle{2}}
 \put(20,15){\circle{2}}
 \put(30,15){\circle{2}}
 \put(30,25){\circle{2}}
 \put(19,5){\vector(-1,0){8}}
 \put(29,5){\vector(-1,0){8}}
 \put(29,15){\vector(-1,0){8}}
 \put(20,6){\vector(0,1){8}}
 \put(30,6){\vector(0,1){8}}
 \put(30,16){\vector(0,1){8}}
 \put(9,1){$s_1$}
 \put(19,1){$s_2$}
 \put(29,1){$s_3$}
 \put(6,6){$t_1$}
 \put(16,16){$t_2$}
 \put(26,26){$t_3$}
  \end{picture}}
 \put(43,17){turns into}
                         %              right
   \put(70,4){\begin{picture}(50,32)
 \put(20,0){\circle*{1.5}}
 \put(33,0){\circle*{1.5}}
 \put(46,0){\circle*{1.5}}
 \put(8.5,11){\circle*{1.5}}
 \put(20.5,23){\circle*{1.5}}
 \put(32.5,35){\circle*{1.5}}
 \put(20,0){\vector(0,1){8}}
 \put(33,0){\vector(0,1){8}}
 \put(46,0){\vector(0,1){8}}
 \put(16,11){\vector(-1,0){7}}
 \put(28,23){\vector(-1,0){7}}
 \put(40,35){\vector(-1,0){7}}
 \put(29,11){\vector(-3,-1){9}}
 \put(42,11){\vector(-3,-1){9}}
 \put(41,23){\vector(-3,-1){9}}
 \put(29,11){\vector(1,3){3}}
 \put(42,11){\vector(1,3){3}}
 \put(41,23){\vector(1,3){3}}
{\thicklines
 \put(20,8){\vector(-4,3){4}}
 \put(33,8){\vector(-4,3){4}}
 \put(46,8){\vector(-4,3){4}}
 \put(32,20){\vector(-4,3){4}}
 \put(45,20){\vector(-4,3){4}}
 \put(44,32){\vector(-4,3){4}}
 }
 \put(19,-4){$\hat s_1$}
 \put(32,-4){$\hat s_2$}
 \put(45,-4){$\hat s_3$}
 \put(4,8){$\hat t_1$}
 \put(16,20){$\hat t_2$}
 \put(28,32){$\hat t_3$}
  \end{picture}}
 \end{picture}
  \end{center}

Note that the new (modified) graph is again acyclic, but it need not be planar in
general (e.g., a local non-planarity arises when the original graph has a vertex
$v$ with four incident edges $e_1,e_2,e_3,e_4$, in this order clockwise, such
that $e_1,e_3$ enter and $e_2,e_4$ leave $v$); nevertheless, the latter fact will
cause no trouble to us. We denote this graph by $\hat G=(\hat V,\hat E)$, and
take $\hat S:=\{\hat s_1,\ldots,\hat s_n\}$ and $\hat T:=\{\hat t_1,\ldots,\hat
t_{n'}\}$ as the sets of sources and sinks in it, respectively. As before,
sources and sinks are also called \emph{terminals}. Clearly for any $i\in[n]$ and
$j\in[n']$, there is a natural 1--1 correspondence between the directed paths
from $s_i$ to $t_j$ in $G$ and the ones from $\hat s_i$ to $\hat t_j$ in $\hat
G$. This is extended to a 1--1 correspondence between flows, and for $(I,I')\in
\Escr^{n,n'}$, we keep notation $\Phi_{I|I'}$ for the set of flows in $\hat G$
going from $\hat S_I:=\{\hat s_i \,\colon i\in I\}$ to $\hat T_{I'}:=\{\hat
t_j\,\colon j\in I'\}$. (When needed, a weighting $w$ on the vertices $v$ of the
initial $G$ is transferred to the split-edges of ${\hat G}$, namely, by setting
$w(e_v):=w(v)$. Then corresponding flows in both networks have equal weights,
which are the $\odot$-products of the weights of vertices or split-edges in the
flows. This implies that the functions on $\Escr^{n,n'}$ generated by
corresponding flows coincide.)

The digraph ${\hat G}$ possesses the following useful properties:
  \begin{numitem1}
(a) Each non-terminal vertex is incident with exactly one split-edge, and if
$e=(u,v)$ is a split-edge, then  $\deltaout_{\hat G}(u)=1$ and $\deltain_{\hat
G}(v)=1$; (b) Each source (sink) has exactly one leaving edge and no entering
edge (resp., one entering edge and no leaving edge).
  \label{eq:1edge}
  \end{numitem1}

Consider disjoint subsets $X,Y\subseteq[n]$ and disjoint subsets
$X',Y'\subseteq[n']$. Let $m:=|Y|$ and $m':=|Y|$. Consider a pair $(A\subseteq
Y,A'\subseteq Y')$ satisfying
  $$
  |X|+|A|=|X'|+|A'| \quad \mbox{and} \quad |X|+|\bar A|=|X'|+|\bar A\,'|,
  $$
as before denoting by $\bar A$ and $\bar A\,'$ the sets $Y-A$ and $Y'-A'$,
respectively.
\medskip

\noindent\textbf{Remark 2.} The above equalities are necessary for the
existence of an $(XA|X'A')$-flow and an $(X\bar A|X'\bar A\,')$-flow (as
before, we write $XA$ for $X\cup A$, and so on). They imply
  \begin{equation} \label{eq:proper}
\mbox{(i)}\;\;  2|X|+|Y|=2|X'|+|Y'|\quad \mbox{and}\quad
  \mbox{(ii)}\;\; |Y|-|Y'|=2(|A|-|A'|).
  \end{equation}

We say that $X,Y,X',Y'$ satisfying~(i) are \emph{consistent} and refer to a
pair $(A\subseteq Y,A'\subseteq Y')$ satisfying~(ii) as being \emph{proper} for
$(Y,Y')$. The set of proper pairs for $(Y,Y)$ is denoted by $\Pi_{Y,Y'}$. For
brevity we write $\Pi_{m,m'}$ for $\Pi_{[m],[m']}$.

Consider an $(XA|X'A')$-flow $\phi$ and a $(X\bar A|X\bar A\,')$-flow $\phi'$
in ${\hat G}$; we call the pair $(\phi,\phi')$ a \emph{double flow} for
$(A,A')$. Our method will rely on two lemmas. Hereinafter we write
$C\triangle\, D$ for the symmetric difference $(C-D)\cup(D-C)$ of sets $C,D$.
  \begin{lemma} \label{lm:sumFFp}
~$E_\phi\triangle\, E_{\phi'}$ is partitioned into the edge sets of pairwise
disjoint circuits $C_1,\ldots,C_d$ (for some $d$) and simple paths $P_1,\ldots,
P_p$ (with $p=\frac12(m+m')$), where each $P_i$ connects either $\hat S_{A}$
and $\hat S_{\bar A}$, or $\hat S_{A}$ and $\hat T_{A'}$, or $\hat S_{\bar A}$
and $\hat T_{\bar A\,'}$, or $\hat T_{A'}$ and $\hat T_{\bar A\,'}$. In each of
these circuits and paths, the edges of $\phi$ and the edges of $\phi'$ have
opposed directions (say, the former edges are forward and the latter ones are
backward).
  \end{lemma}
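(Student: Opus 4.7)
The plan is to argue locally from the split-edge structure (2.1) and then assemble the global decomposition from those local constraints. First I would compute the degree of each vertex of $\hat G$ in the edge set $E_\phi\triangle E_{\phi'}$, treated as an undirected graph. By (2.1)(a), at each internal vertex $v'$ the unique outgoing edge is the split-edge $e_v$, so $\phi$ is incident with $v'$ either not at all or via exactly one incoming ordinary edge together with $e_v$; the same holds for $\phi'$, and symmetrically at $v''$. A four-case check (neither uses $v$, only $\phi$, only $\phi'$, both but possibly with different incoming edges) shows the degree of $v'$ and of $v''$ in $E_\phi\triangle E_{\phi'}$ is always $0$ or $2$. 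At a terminal, (2.1)(b) gives a single incident edge which lies in $\phi$ iff the terminal is a source/sink for $\phi$ (and similarly for $\phi'$), so by the disjointness of $X,A,\bar A$ (and of $X',A',\bar A\,'$) the degree-$1$ vertices are exactly those in $\hat S_A\cup\hat S_{\bar A}\cup\hat T_{A'}\cup\hat T_{\bar A\,'}$.

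Second, since every vertex has degree at most $2$ and the odd-degree vertices are precisely the terminals above, $E_\phi\triangle E_{\phi'}$ decomposes uniquely into edge-disjoint simple circuits $C_1,\ldots,C_d$ and simple paths whose endpoints exhaust $\hat S_A\cup\hat S_{\bar A}\cup\hat T_{A'}\cup\hat T_{\bar A\,'}$. The number of endpoints is $|A|+|\bar A|+|A'|+|\bar A\,'|=m+m'$, so exactly $p=(m+m')/2$ paths occur.

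Third, and this is the main obstacle, I must show that along each component the $\phi$-edges and $\phi'$-edges point in opposite directions, and then deduce that only the four listed endpoint pairings arise. Locally at a degree-$2$ internal vertex, the two symmetric-difference edges fall into one of two flavors: either both belong to the same flow, in which case one is an incoming ordinary edge and the other is the outgoing $e_v$, so a traversal through the vertex moves naturally in that flow's direction; or they belong to different flows, in which case $e_v$ (being common to both flows) cancels, forcing both edges at $v'$ to be incoming ordinary edges (symmetrically, outgoing at $v''$), so a traversal enters along one flow's direction and exits against the other's. Both patterns propagate the same rule, so each connected component admits a global orientation in which every $\phi$-edge is forward and every $\phi'$-edge is backward. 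At the endpoints the forced initial direction—outward from sources, inward to sinks, with sign depending on which flow owns the unique incident edge—then rules out the pairings $\hat S_A\leftrightarrow\hat T_{\bar A\,'}$ and $\hat S_{\bar A}\leftrightarrow\hat T_{A'}$: a path leaving $\hat S_A$ starts forward along a $\phi$-edge, and the only compatible terminations are $\hat T_{A'}$ (end of a $\phi$-path) or $\hat S_{\bar A}$ (source of $\phi'$, reached by going backward along $\phi'$); symmetrically from $\hat T_{\bar A\,'}$. This leaves exactly the four allowed endpoint types listed in the lemma. The subtle point throughout is the mixed-flow case at an internal vertex, where it is the specific split-edge structure from (2.1)(a) that prevents the symmetric difference from developing a "third" edge and keeps the alternation consistent.
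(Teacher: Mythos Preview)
Your proposal is correct and follows essentially the same route as the paper's own proof: both compute local degrees in $E_\phi\triangle E_{\phi'}$ from the split-edge property~\refeq{1edge}, obtain the circuit/path decomposition from the fact that degrees are $\le 2$ with degree~$1$ exactly at $\hat S_A\cup\hat S_{\bar A}\cup\hat T_{A'}\cup\hat T_{\bar A\,'}$, use the split-edge at a mixed-flow internal vertex to force the two symmetric-difference edges to both enter or both leave (yielding the alternation), and then read off the endpoint restrictions from that alternation. One minor wording point: since every vertex has degree at most~$2$, the components are vertex-disjoint (not merely edge-disjoint), which is what the lemma asserts.
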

  \begin{proof}
~Observe that a vertex $v$ of ${\hat G}$ satisfies: (i) $\delta_{\phi}(v)=1$
and $\delta_{\phi'}(v)=0$ if $v\in\hat S_{A}\cup \hat T_{A'}$; (ii)
$\delta_{\phi}(v)=0$ and $\delta_{\phi'}(v)=1$ if $v\in\hat S_{\bar A}\cup \hat
T_{\bar A\,'}$; (iii) $\delta_{\phi}(v)=\delta_{\phi'}(v)=1$ if $v\in \hat
S_X\cup \hat T_{X'}$; and (iv) $\delta_{\phi}(v),\delta_{\phi'}(v)\in\{0,2\}$
otherwise. This together with~\refeq{1edge} implies that any vertex $v$ is
incident with 0, 1 or 2 edges in $E_\phi\triangle\, E_{\phi'}$, and the number
of such edges is equal to 1 if and only if $v\in \hat S_{A}\cup\hat S_{\bar A}
\cup \hat T_{A'}\cup \hat T_{\bar A\,'}$. (This is where we essentially use the
transformation of $G$ into $\hat G$.) Hence the weakly connected components of
the subgraph of ${\hat G}$ induced by $E_\phi\triangle\, E_{\phi'}$ are
circuits, say, $C_1,\ldots,C_d$, and simple paths $P_1,\ldots,P_p$, each of the
latter connecting two terminals in $\hat S_{A}\cup\hat S_{\bar A} \cup \hat
T_{A'}\cup \hat T_{\bar A\,'}$.

Consider consecutive edges $e,e'$ in a circuit $C_i$ or a path $P_j$. If both
$e,e'$ belong to the same flow among $\phi,\phi'$, then, obviously, they have the
same direction in this circuit/path. Suppose $e,e'$ belong to different flows. In
view of~\refeq{1edge}, the common vertex $v$ of $e,e'$ is not a terminal and it
is incident with a split-edge $e''$. Clearly $e''$ belongs to both $\phi,\phi'$,
and therefore $e''\ne e,e'$. Then either both $e,e'$ enter $v$ or both leave $v$,
so they are directed differently along the circuit/path containing them. This
yields the second assertion in the lemma.

Finally, consider a path $P_j=(v_0,e_1,v_1,\ldots,e_r,v_r)$ as above, and
suppose that some of its ends, say, $v_0$, belongs to $\hat S_{A}$. Then the
extra edge $e_1$ is contained in $\phi$ and leaves the source $v_0$. If $v_r\in
\hat S_A$, then the extra edge $e_r$ is in $\phi$ as well and leaves the source
$v_r$; so $e_1,e_r$ are directed differently along $P_j$, contradicting the
argument above. And if $v_r\in \hat T_{\bar A\,'}$, then $e_r$ belongs to
$\phi'$ and enters the sink $v_r$; so $e_1,e_r$ have the same direction along
$P_j$, again obtaining a contradiction. Thus, $P_j$ connects $\hat S_A$ and
$\hat S_{\bar A}\cup \hat T_{A'}$. Similarly, any path $P_j$ neither has both
ends in exactly one of $\hat S_{\bar A},\hat T_{A'},\hat T_{\bar A\,'}$, nor
connects $\hat S_{\bar A}$ and $\hat T_{A'}$.
  \end{proof}

Figure~\ref{fig:double} illustrates an example of ${\hat G},\phi,\phi'$ and
indicates $E_\phi\sqcup E_{\phi'}$ and $E_\phi\triangle\, E_{\phi'}$.

\begin{figure}[htb]

 \begin{center}
 \unitlength=1.0mm
  \begin{picture}(145,53)
   \put(0,10){\begin{picture}(20,43)  %      -  (a)
 \put(0,0){\circle{2}}
 \put(8,0){\circle{2}}
 \put(16,0){\circle{2}}
 \put(0,38){\circle{2}}
 \put(12,38){\circle{2}}
 \put(18,38){\circle{2}}
 \put(0,1){\vector(0,1){28.5}}
 \put(8,0){\vector(2,3){4}}
 \put(16,0){\vector(-2,3){4}}
 \put(12,6){\vector(0,1){6}}
 \put(12,12){\vector(1,1){6}}
 \put(12,12){\vector(-1,1){6}}
 \put(6,18){\vector(1,1){6}}
 \put(18,18){\vector(-1,1){6}}
 \put(12,24){\vector(0,1){6}}
 \put(12,30){\vector(-1,0){12}}
 \put(12,30){\vector(0,1){7}}
 \put(0,30){\vector(0,1){7}}
 \put(0,-5){$\hat s_1$}
 \put(8,-5){$\hat s_2$}
 \put(16,-5){$\hat s_3$}
 \put(0,40){$\hat t_1$}
 \put(9,40){$\hat t_2$}
 \put(18,40){$\hat t_3$}
  \end{picture}}
 \put(7,-2){(a)}
   \put(32,10){\begin{picture}(20,43)  %      -  (b)
 \put(0,0){\circle{2}}
 \put(8,0){\circle{2}}
 \put(16,0){\circle{2}}
 \put(0,38){\circle{2}}
 \put(12,38){\circle{2}}
 \put(18,38){\circle{2}}
 \put(0,1){\vector(0,1){28.5}}
 \put(16,0){\vector(-2,3){4}}
 \put(12,6){\vector(0,1){6}}
 \put(12,12){\vector(1,1){6}}
 \put(18,18){\vector(-1,1){6}}
 \put(12,24){\vector(0,1){6}}
 \put(12,30){\vector(0,1){7}}
 \put(0,30){\vector(0,1){7}}
  \end{picture}}
 \put(39,-2){(b)}
   \put(64,10){\begin{picture}(20,43)  %      -  (c)
 \put(0,0){\circle{2}}
 \put(8,0){\circle{2}}
 \put(16,0){\circle{2}}
 \put(0,38){\circle{2}}
 \put(12,38){\circle{2}}
 \put(18,38){\circle{2}}
 \put(8,0){\vector(2,3){4}}
 \put(12,6){\vector(0,1){6}}
 \put(12,12){\vector(-1,1){6}}
 \put(6,18){\vector(1,1){6}}
 \put(12,24){\vector(0,1){6}}
 \put(12,30){\vector(-1,0){12}}
 \put(0,30){\vector(0,1){7}}
  \end{picture}}
 \put(69,-2){(c)}
   \put(96,10){\begin{picture}(20,43)  %      -  (d)
 \put(0,0){\circle{2}}
 \put(8,0){\circle{2}}
 \put(16,0){\circle{2}}
 \put(0,38){\circle{2}}
 \put(12,38){\circle{2}}
 \put(18,38){\circle{2}}
 \put(0,1){\vector(0,1){28.5}}
 \put(8,0){\vector(2,3){4}}
 \put(16,0){\vector(-2,3){4}}
 \put(11.5,6){\vector(0,1){6}}
 \put(12.5,6){\vector(0,1){6}}
 \put(12,12){\vector(1,1){6}}
 \put(12,12){\vector(-1,1){6}}
 \put(6,18){\vector(1,1){6}}
 \put(18,18){\vector(-1,1){6}}
 \put(11.5,24){\vector(0,1){6}}
 \put(12.5,24){\vector(0,1){6}}
 \put(12,30){\vector(-1,0){12}}
 \put(12,30){\vector(0,1){7}}
 \put(-0.5,30){\vector(0,1){7}}
 \put(0.5,30){\vector(0,1){7}}
  \end{picture}}
 \put(103,-2){(d)}
   \put(128,10){\begin{picture}(20,43)  %      -  (e)
 \put(0,0){\circle{2}}
 \put(8,0){\circle{2}}
 \put(16,0){\circle{2}}
 \put(0,38){\circle{2}}
 \put(12,38){\circle{2}}
 \put(18,38){\circle{2}}
 \put(0,1){\vector(0,1){28.5}}
 \put(8,0){\vector(2,3){4}}
 \put(16,0){\vector(-2,3){4}}
 \put(12,12){\vector(1,1){6}}
 \put(12,12){\vector(-1,1){6}}
 \put(6,18){\vector(1,1){6}}
 \put(18,18){\vector(-1,1){6}}
 \put(12,30){\vector(-1,0){12}}
 \put(12,30){\vector(0,1){7}}
  \put(1,7){$P'$}
  \put(15,3){$P$}
 \end{picture}}
 \put(135,-2){(e)}
 \end{picture}
  \end{center}
\caption{(a) ${\hat G}$; \;\;(b) $\phi$; \;\;(c) $\phi'$; \;\;(d) $E_\phi\sqcup
E_{\phi'}$; \;\;(e) $E_\phi\triangle\, E_{\phi'}$} \label{fig:double}
  \end{figure}

Next we explain how to rearrange a double flow $(\phi,\phi')$ for $(A,A')$ so
as to obtain a double flow for another pair $(B,B')\in\Pi_{Y,Y'}$. Let
$P_1,\ldots,P_p$ be the paths as in Lemma~\ref{lm:sumFFp}, where
$p=\frac12(m+m')$. We denote the set of these paths by $\Pscr(\phi,\phi')$. For
a path $P\in\Pscr(\phi,\phi')$, let $\pi(P)$ denote the pair of elements in
$Y\sqcup Y'$ corresponding to the end vertices of $P$. We observe from
Lemma~\ref{lm:sumFFp} that $\pi(P)$ belongs to one of $A\times \bar A$,
$A\times A'$, $A'\times \bar A\,'$, $\bar A\times \bar A\,'$ (considering
$\pi(P)$ up to reversing). Define
  $$
  M(\phi,\phi'):=\{\pi(P)\,\colon P\in\Pscr(\phi,\phi')\}.
 $$
This set of pairs forms a \emph{perfect matching} on $Y\sqcup Y'$ (i.e., each
element of the latter set is contained in exactly one pair).

%Also for $\pi\in M(\phi,\phi')$, let $P^\pi$ denote the path
%$P\in\Pscr(\phi,\phi')$ with $\pi(P)=\pi$.
   %
   \begin{lemma} \label{lm:path_switch}
Choose an arbitrary subset $M_0\subseteq M(\phi,\phi')$. Define $Z:=\cup(\pi\in
M_0)\cap Y$, ~$Z':=\cup(\pi\in M_0)\cap Y'$, ~$B:=A\triangle\, Z$, and
$B':=A'\triangle\, Z'$. Define $U:=\cup(E_{P}\,\colon
P\in\Pscr(\phi,\phi'),~\pi(P)\in M_0)$. Then there are a unique
$(XB|X'B')$-flow $\psi$ and a unique $(X\bar B|X'\bar B\,')$-flow $\psi'$ such
that $E_\psi=E_\phi\triangle\,U$ and $E_{\psi'}=E_{\phi'}\triangle\, U$. In
particular, $E_\psi\sqcup E_{\psi'}=E_\phi\sqcup E_{\phi'}$.
  \end{lemma}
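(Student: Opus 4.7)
The plan is to define $\psi$ and $\psi'$ via their edge sets $E_\psi := E_\phi\triangle U$ and $E_{\psi'} := E_{\phi'}\triangle U$, and check that the induced subgraphs of $\hat G$ are flows of the stated types. Since $U \subseteq E_\phi \triangle E_{\phi'}$, every edge of $U$ lies in exactly one of $E_\phi, E_{\phi'}$ and merely swaps sides under the involution, so $E_\psi \sqcup E_{\psi'} = E_\phi \sqcup E_{\phi'}$, yielding the ``in particular'' clause at once. Uniqueness will be automatic, as a flow in $\hat G$ is determined by its edge set.

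The main step is local flow conservation at each intermediate vertex $v$. If no edge of $U$ is incident with $v$, then $\psi$ agrees locally with $\phi$ at $v$ (and $\psi'$ with $\phi'$), so conservation is inherited. Otherwise $v$ lies on a unique selected path $P \in \Pscr(\phi,\phi')$ with $\pi(P) \in M_0$ and is incident to exactly two $P$-edges $e_1, e_2$. By Lemma~\ref{lm:sumFFp} the $\phi$-edges in $P$ all point one way along $P$ and the $\phi'$-edges in $P$ the opposite way, which pins down the $\hat G$-orientation of each of $e_1, e_2$ from its membership in $\phi$ or $\phi'$. A short case analysis on how $\{e_1, e_2\}$ splits between $\phi$ and $\phi'$ (both in $\phi$; both in $\phi'$; one in each) shows that in every case $\deltain_\psi(v) - \deltain_\phi(v)$ equals $\deltaout_\psi(v) - \deltaout_\phi(v)$, so $\psi$ remains conservative at $v$; the same bookkeeping applies to $\psi'$.

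For the boundary identification I turn to the terminals. A source $\hat s_a$ lies in the source set of $\psi$ iff its unique incident extra edge belongs to $E_\psi = E_\phi \triangle U$. That extra edge is in $\phi$ iff $a \in X\cup A$, and lies in $U$ iff $\hat s_a$ is an endpoint of a selected path, i.e., iff $a \in Z$. Hence the source set of $\psi$ is $\hat S_{(X\cup A)\triangle Z} = \hat S_{X \cup B}$, using $X \cap Z = \emptyset$ and $B = A \triangle Z$; the sink set is $\hat T_{X'\cup B'}$ by the parallel computation with $B' = A'\triangle Z'$, and the complementary check handles $\psi'$. A short cardinality accounting on the four end-types of paths in $M_0$ also confirms that $(B,B')$ is a proper pair for $(Y,Y')$, so the sizes $|XB|=|X'B'|$ match as required for a flow. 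I expect the only delicate step to be the case analysis at intermediate vertices: translating ``opposed directions along $P$'' into head/tail bookkeeping at each $v$ requires care but is otherwise mechanical.
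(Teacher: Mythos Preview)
Your argument is correct and follows essentially the same route as the paper: both rely on the alternating $\phi/\phi'$ structure along each path $P$ established in Lemma~\ref{lm:sumFFp}, with the paper phrasing the exchange as swapping the maximal directed subpaths $Q_1,\ldots,Q_r$ of $P$ between $\phi$ and $\phi'$, while you phrase it as a symmetric-difference with $U$ followed by a vertex-by-vertex degree check. One small point worth making explicit is that beyond conservation you also need $\deltain_\psi(v),\deltaout_\psi(v)\le 1$ to get vertex-disjoint paths; this follows immediately from property~\refeq{1edge} of $\hat G$ once $\deltain_\psi(v)=\deltaout_\psi(v)$ is known.
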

  \begin{proof}
~By Lemma~\ref{lm:sumFFp}, each path $P\in \Pscr(\phi,\phi')$ is a
concatenation of directed paths $Q_1,\ldots,Q_r$ (considered up to reversing),
where consecutive $Q_j,Q_{j+1}$ are contained in different flows among
$\phi,\phi'$ and either both leave or both enter their common vertex.
Therefore, exchanging the pieces $Q_j$ in $\phi,\phi'$, we obtain an
$(XC|X'C')$-flow $\alpha$ and an $(X\bar C|X'\bar C\,')$-flow $\alpha'$ such
that $E_{\alpha}=E_\phi\triangle\,E_{P}$ and $E_{\alpha'}=E_{\phi'}\triangle\,
E_{P}$, where $C:=A\triangle\,(\pi\cap Y)$ and $C':=A'\triangle\,(\pi\cap Y')$.

Doing so for all $P\in \Pscr(\phi,\phi')$ with $\pi(P)\in M_0$, we obtain flows
$\psi,\psi'$ satisfying the desired properties, taking into account that the
paths in $\Pscr(\phi,\phi')$ are pairwise disjoint. The uniqueness of
$\psi,\psi'$ is obvious.
  \end{proof}

Note that $M(\psi,\psi')=M(\phi,\phi')$ and $\Pscr(\psi,\psi')=
\Pscr(\phi,\phi')$, and the transformation of $\psi,\psi'$ by use of the paths
in $\Pscr(\psi,\psi')$ related to $M_0$ returns the flows $\phi,\phi'$.

Figure~\ref{fig:exchange} illustrates flows $\psi,\psi'$ created from
$\phi,\phi'$ in Fig.~\ref{fig:double}. Here the left fragment shows
$\psi,\psi'$ when the exchange is performed with respect to the (single) path
$P$ in $\Pscr(\phi,\phi')$ connecting the sources $\hat s_2$ and $\hat s_3$,
and the right fragment shows those for the path $P'$ connecting the source
$\hat s_1$ and the sink $\hat t_2$ (see~(e) in Fig.~\ref{fig:double}).

\begin{figure}[htb]

 \begin{center}
 \unitlength=1.0mm
  \begin{picture}(140,45)
   \put(0,3){\begin{picture}(20,40)  %      -  left 1
 \put(0,0){\circle{2}}
 \put(8,0){\circle{2}}
 \put(16,0){\circle{2}}
 \put(0,38){\circle{2}}
 \put(12,38){\circle{2}}
 \put(18,38){\circle{2}}
 \put(0,1){\vector(0,1){28.5}}
 \put(8,0){\vector(2,3){4}}
 \put(12,6){\vector(0,1){6}}
 \put(12,12){\vector(1,1){6}}
 \put(18,18){\vector(-1,1){6}}
 \put(12,24){\vector(0,1){6}}
 \put(12,30){\vector(0,1){7}}
 \put(0,30){\vector(0,1){7}}
  \end{picture}}
 \put(7,-3){$\psi$}
   \put(35,3){\begin{picture}(20,40)  %      -  left 2
 \put(0,0){\circle{2}}
 \put(8,0){\circle{2}}
 \put(16,0){\circle{2}}
 \put(0,38){\circle{2}}
 \put(12,38){\circle{2}}
 \put(18,38){\circle{2}}
 \put(16,0){\vector(-2,3){4}}
 \put(12,6){\vector(0,1){6}}
 \put(12,12){\vector(-1,1){6}}
 \put(6,18){\vector(1,1){6}}
 \put(12,24){\vector(0,1){6}}
 \put(12,30){\vector(-1,0){12}}
 \put(0,30){\vector(0,1){7}}
  \end{picture}}
 \put(40,-3){$\psi'$}
   \put(90,3){\begin{picture}(20,40)  %      -  right 1
 \put(0,0){\circle{2}}
 \put(8,0){\circle{2}}
 \put(16,0){\circle{2}}
 \put(0,38){\circle{2}}
 \put(12,38){\circle{2}}
 \put(18,38){\circle{2}}
 \put(16,0){\vector(-2,3){4}}
 \put(12,6){\vector(0,1){6}}
 \put(12,12){\vector(1,1){6}}
 \put(18,18){\vector(-1,1){6}}
 \put(12,24){\vector(0,1){6}}
 \put(0,30){\vector(0,1){7}}
 \put(12,30){\vector(-1,0){12}}
  \end{picture}}
 \put(97,-3){$\psi$}
   \put(120,3){\begin{picture}(20,40)  %      -  right 2
 \put(0,0){\circle{2}}
 \put(8,0){\circle{2}}
 \put(16,0){\circle{2}}
 \put(0,38){\circle{2}}
 \put(12,38){\circle{2}}
 \put(18,38){\circle{2}}
 \put(0,1){\vector(0,1){28.5}}
 \put(12,30){\vector(0,1){7}}
 \put(8,0){\vector(2,3){4}}
 \put(12,6){\vector(0,1){6}}
 \put(12,12){\vector(-1,1){6}}
 \put(6,18){\vector(1,1){6}}
 \put(12,24){\vector(0,1){6}}
 \put(0,30){\vector(0,1){7}}
  \end{picture}}
 \put(127,-3){$\psi'$}
 \end{picture}
  \end{center}
\caption{Creating $\psi,\psi'$ from $\phi,\phi'$ in Fig.~\ref{fig:double}: by
use of $P$ (left); by use of $P'$ (right).}
 \label{fig:exchange}
  \end{figure}

In the next section we will use the fact that, although the modified graph
${\hat G}$ may not be planar, its subgraph $\phi\cup\phi'$ is planar.

To see this, consider a non-terminal vertex $v$ in the initial graph $G$ that
belongs to both flows $\phi,\phi'$. Let $a,a'$ be the edges of $\phi$
(concerning $G$) entering and leaving $v$, respectively, and let $b,b'$ be
similar edges for $\phi'$. The only situation when the subgraph $\phi\cup\phi'$
(concerning $\hat G$) is not locally planar in a small neighborhood of the
split-edge $e_v$ is that all $a,a',b,b'$ are different and follow in this order
(clockwise or counterclockwise) around $v$. We assert that this is not the
case. Indeed, $a,a'$ belong to a directed path $P$ in ${\hat G}$ from a source
$\hat s_i$ to a sink $\hat t_{i'}$, and $b,b'$ belong to a directed path $Q$
from $\hat s_j$ to $\hat t_{j'}$. From the facts that the initial graph $G$ is
planar and acyclic and that the edges $a,a',b,b'$ occur in this order around
$v$ one can conclude that the paths $P,Q$ can meet only at $v$. This implies
that the terminals $s_i,t_{i'},s_j,t_{j'}$ are different and follow in this
cyclic order in the boundary $O$; a contradiction. Thus, $\phi\cup\phi'$ is
planar, as required.
\medskip

\noindent\textbf{Remark 3.} ~In the definition of FG-functions one can
equivalently consider only the acyclic digraphs $G$ having the additional
property that each edge of $G$ belongs to at least one directed path going from
a source to a sink. Arguing as above, one easily shows that for any vertex $v$
of such a $G$, the edge direction (to $v$ or from $v$) changes at most twice
when we move around $v$. Then the modified graph $\hat G$ is automatically
planar, and so is $\phi\cup\phi'$.
\medskip

  \Xcomment{
\noindent\textbf{Remark 4.} ~For a double flow $(\phi,\phi')$ for $(A,A')$, let
$C_1,\ldots,C_d$ be the circuits in $E_\phi\triangle\,E_{\phi'}$ (cf.
Lemma~\ref{lm:sumFFp}) and let $\xi(\phi,\phi')$ denote the 0,1,2-function on
$\hat E$ whose value on an edge $e$ is equal to the number of occurrences of
$e$ in $\phi,\phi'$. For an arbitrary subset $K\subseteq[d]$, replace $E_\phi$
by $E_\phi\triangle\,\Cscr$, and $E_{\phi'}$ by $E_{\phi'}\triangle\,\Cscr$,
where $\Cscr:=\cup(E_{C_i}\,\colon i\in K)$. This gives a double flow
$(\alpha,\alpha')$ for $(A,A')$ such that
$\xi(\alpha,\alpha')=\xi(\phi,\phi')$. As a result, there are exactly $2^d$
double flows for $(A,A')$ having the same ``characteristic function'' $\xi$.
  }

%----------------------- Sec.3

\section{\Large Balanced families}  \label{sec:balan}

In this section we use the above observations and results to construct families
involved in stable quadratic relations.

Consider the same objects as before: consistent sets $X,Y,X',Y'$ and a proper
pair $(A,A')$ for $(Y,Y')$ (obeying~\refeq{proper}), a double flow
$(\phi,\phi')$ for $(A,A')$, and the perfect matching $M=M(\phi,\phi')$ on
$Y\sqcup Y'$, referring to the members of $M$ as \emph{couples}. We denote the
set of double flows for $(A,A')$ by $\Dscr(A,A')$ (when $X,Y,X',Y'$ are fixed).

We associate to $(A,A')$ the set $\Mscr(A,A')$ (or $\Mscr_{Y,Y'}(A,A)$) of
feasible perfect matchings $M$ on $Y\sqcup Y'$ defined as follows. Let us think
that the elements of $Y$ and $Y'$ are placed, respectively, on the lower half
and on the upper half of a circumference $O$, in the increasing order from left
to right. Also let us call the elements (points) of $A\sqcup A'$ \emph{white},
and the elements of $\bar A\sqcup\bar A\,'$ \emph{black}. Then a perfect
matching $M$ on $Y\sqcup Y'$ is called \emph{feasible} for $(A,A')$ if:
  \begin{numitem1}
  \begin{itemize}
\item[(i)] When both elements of a couple $\pi\in M$ lie either in the lower half of $O$ or in
the upper half of $O$, these elements have different colors;
\item[(ii)] When the elements of $\pi$ lie in different halves, these elements have the same color;
\item[(iii)] $M$ is planar, in the sense that the chords of $O$ connecting the couples in $M$
are pairwise not intersecting.
 \end{itemize}
 \label{eq:match1}
 \end{numitem1}

Observe that for $(\phi,\phi')\in\Dscr(A,A')$, the matching $M=M(\phi,\phi')$
is feasible. This follows from Lemma~\ref{lm:sumFFp}, taking into account that
the subgraph $\phi\cup\phi'$ of $\hat G$ is planar and that the paths in
$\Pscr(\phi,\phi')$ are pairwise disjoint. A priori any matching in
$\Mscr(A,A')$ may be expressed as $M(\phi,\phi')$ for some
$(\phi,\phi')\in\Dscr(A,A')$.

We refer to a triple $(A,A',M)$ with $(A,A')\in\Pi_{Y,Y'}$ and
$M\in\Mscr(A,A')$ as a \emph{configuration}. For a 2-family
$\Ascr\Subset\Pi_{Y,Y'}$, we define $\Kscr(\Ascr)$ to be the family of all
configurations $(A,A',M)$ (with possible multiplicities) arising when $(A,A')$
runs over $\Ascr$.

The \emph{exchange operation} applied to a configuration $(A,A',M)$ and to a
chosen subset $ M_0\subseteq M$ makes the pair $(B,B')$ defined by
$B:=A\triangle\,(\cup(\pi\in M_0)\cap Y)$ and $B':=A\triangle\,(\cup(\pi\in
M_0)\cap Y')$; in other words, we change the colors of both elements in each
couple $\pi\in M_0$ (cf. Lemma~\ref{lm:path_switch}). Then $M$ becomes a
feasible matching for $(B,B')$, and the exchange operation applied to the
configuration $(B,B',M)$ and the same $M_0$ returns $(A,A')$.
\medskip

 \noindent\textbf{Definition.}
We say that two 2-families $\Ascr,\Bscr\Subset \Pi_{Y,Y'}$ are \emph{balanced}
if there exists a bijection between $\Kscr(\Ascr)$ and $\Kscr(\Bscr)$ such that
the corresponding configurations $(A,A',M)$ and $(B,B',M')$ have the same
matching: $M=M'$. (We rely on the simple fact that for any two configurations
$(A,A',M)$ and $(B,B',M)$, the pair $(B,B')$ can be obtained from $(A,A')$ by
the exchange operation w.r.t. some $ M_0\subseteq M$.) Equivalently,
$\Ascr,\Bscr$ are balanced if for each planar perfect matching $M$ on $Y\sqcup
Y'$, the number of times $M$ occurs in sets $\Mscr_{Y,Y'}(A,A')$ among
$(A,A')\in\Ascr$ is equal to a similar number for sets $\Mscr_{Y,Y'}(B,B')$
among $(B,B')\in\Bscr$. This can be written as
  $$
  \Mscr(\Ascr)=\Mscr(\Bscr),
  $$
where for $\Cscr\Subset\Pi_{Y,Y'}$, ~$\Mscr(\Cscr)$ denotes the family
consisting of matchings $M$ taken with the multiplicities equal to the number
of $(C,C')\in\Cscr$ such that $M\in\Mscr(C,C')$. Clearly $\Ascr,\Bscr$ are
balanced if and only if their 2-patterns $\Ascr_0,\Bscr_0$ are
balanced.\medskip

Our main result is the following
  \begin{theorem} \label{tm:main}
~Let $\Ascr_0,\Bscr_0\Subset\Pi_{m,m'}$. The following statements are
equivalent:

{\rm (i)} ~\refeq{gen_sq} is a stable quadratic relation, where
$\Ascr=\gamma_{Y,Y'}(\Ascr_0)$ and $\Bscr=\gamma_{Y,Y'}(\Bscr_0)$;

{\rm (ii)} ~$\Ascr_0,\Bscr_0$ are balanced.
  \end{theorem}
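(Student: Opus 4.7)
The plan is to prove the two implications separately, with (ii)$\Rightarrow$(i) being a direct consequence of the double-flow machinery of Section~\ref{sec:flow}, and (i)$\Rightarrow$(ii) requiring an explicit construction.

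For the sufficiency direction (ii)$\Rightarrow$(i), I would fix arbitrary $\frakS$, $G$, $w$, and consistent $X,Y,X',Y'$, and expand both sides of~\refeq{gen_sq}. By definition,
$f(XA|X'A')\odot f(X\bar A|X'\bar A\,')$ equals $\bigoplus_{(\phi,\phi')\in\Dscr(A,A')} w(\phi)\odot w(\phi')$, so each side of~\refeq{gen_sq} becomes a $\oplus$-sum, indexed by pairs $\bigl((A,A'),(\phi,\phi')\bigr)$ with $(A,A')\in\Ascr$ (resp.\ $\Bscr$) and $(\phi,\phi')\in\Dscr(A,A')$, of the terms $w(\phi)\odot w(\phi')$. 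Next I would group these indexing pairs by the characteristic matching $M=M(\phi,\phi')$ on $Y\sqcup Y'$. The key observation is that if $(\phi,\phi'),(\psi,\psi')$ are related by the path-exchange of Lemma~\ref{lm:path_switch} along some $M_0\subseteq M$, then $E_\psi\sqcup E_{\psi'}=E_\phi\sqcup E_{\phi'}$ as multisets of edges, hence (after transferring $w$ to split-edges of~$\hat G$) $w(\psi)\odot w(\psi')=w(\phi)\odot w(\phi')$. So Lemma~\ref{lm:path_switch} gives, for each feasible $M$, a weight-preserving bijection between double flows with matching~$M$ over the $2^{|M|}$ pairs $(A,A')$ for which $M$ is feasible. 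Balancedness means precisely that for every planar perfect matching $M$, the multiplicity of $M$ in $\Mscr(\Ascr)$ equals that in $\Mscr(\Bscr)$; combining this with the above weight-preserving bijection produces a termwise matching between LHS and RHS, hence equality in any commutative semiring.

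For the necessity direction (i)$\Rightarrow$(ii), I would argue contrapositively: assume $\Mscr(\Ascr_0)\ne\Mscr(\Bscr_0)$ and exhibit a planar network with integer vertex weights violating~\refeq{gen_sq}. The idea is to pick a particular planar perfect matching $M^\ast$ on $[m]\sqcup[m']$ whose multiplicities in $\Mscr(\Ascr_0)$ and $\Mscr(\Bscr_0)$ differ, and construct $G$ and $w$ so that the expansion above is ``diagonalized'' by the matching: for each planar $M$, the double flows of~$G$ whose characteristic matching is~$M$ contribute a distinguishable monomial in the vertex weights, and different matchings contribute algebraically independent monomials. Since the cyclic arrangement of terminals on $O$ already determines the planar matchings combinatorially, a natural candidate is a network built by routing $m+m'$ disjoint ``bundles'' along the chords of~$M^\ast$ (together with auxiliary gadgets that realize every planar matching) and attaching generic integer weights; this yields an $f_w$ whose image in $\Zset$ decomposes as a linear combination over planar matchings, so~\refeq{gen_sq} reduces to the identity $\Mscr(\Ascr)=\Mscr(\Bscr)$. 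Failure of balancedness then produces a violation in~$\Zset$, which, via the transfer principle, shows~\refeq{gen_sq} is not a PSQ-relation.

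The main obstacle will be step~(i)$\Rightarrow$(ii): one has to design a planar network rich enough that every feasible matching is realized by some double flow, yet rigid enough that the contribution of each matching to $f$ is given by an algebraically independent monomial so that no cancellations between different matchings can mask the imbalance. The sufficiency direction is technically routine once Lemmas~\ref{lm:sumFFp}--\ref{lm:path_switch} are in hand; the only subtle point there is to verify that the exchange operation on $M_0\subseteq M$ indeed sets up a bijection of double flows within each matching class, which follows from the uniqueness assertion in Lemma~\ref{lm:path_switch} together with the involutivity of exchange.
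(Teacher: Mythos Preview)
Your treatment of (ii)$\Rightarrow$(i) is essentially the paper's Proposition~\ref{pr:balance-sq}: expand each side into a $\oplus$-sum over double flows, group by the matching $M(\phi,\phi')$, and use Lemma~\ref{lm:path_switch} together with the balancedness hypothesis to set up a weight-preserving bijection between the two index sets.  Nothing to add there.

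For (i)$\Rightarrow$(ii) you have the right overall shape (contrapositive, explicit planar network over~$\Zset$), but your proposed construction diverges from the paper's and, as stated, is too vague to carry through.  You aim for a single ``universal'' network that realizes \emph{every} planar matching and in which the contributions of distinct matchings are separated by algebraically independent monomials in generic integer weights; you then want to read off the identity $\Mscr(\Ascr)=\Mscr(\Bscr)$ from~\refeq{gen_sq}.  The paper does something sharper and simpler: having fixed one matching~$M$ with $|\Ascr_M|\ne|\Bscr_M|$, it builds (in a five-step geometric construction) a planar network $G$ depending on~$M$ with the properties (P1)~if $M\in\Mscr(C,C')$ then $G$ has a \emph{unique} $(XC|X'C')$-flow and a unique $(X\bar C|X'\bar C\,')$-flow, and (P2)~if $M\notin\Mscr(C,C')$ then at least one of these flow sets is empty.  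With the all-unit weighting $w\equiv 1$, each product $f(XA|X'A')\,f(X\bar A|X'\bar A\,')$ is then either $1$ or $0$ according as $M$ is feasible for $(A,A')$ or not, so the two sides of~\refeq{gen_sq} become exactly $|\Ascr_M|$ and $|\Bscr_M|$.  No genericity, no algebraic-independence argument, and no need to realize any matching other than~$M$.

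Your approach is not obviously doomed, but it would require substantially more work: you would need a single $G$ in which \emph{every} planar matching arises from some double flow (otherwise the polynomial identity you extract is weaker than balancedness), and you would need to control cancellations among double flows that share an edge multiset but differ by circuits in $E_\phi\triangle E_{\phi'}$.  The paper sidesteps all of this by isolating a single matching.
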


Part (i)$\Rightarrow$(ii) of this theorem will be shown in
Section~\ref{sec:(i)-(ii)}. In its turn, part (ii)$\Rightarrow$(i) can be
immediately proved by relying on the lemmas from the previous section.
  \begin{prop} \label{pr:balance-sq}
Let $\Ascr_0,\Bscr_0\Subset\Pi_{m,m'}$ be balanced. Then
identity~\refeq{gen_sq} holds for any consistent sets $X,Y,X',Y'$ (concerning
$n,n',m,m'$ as above; cf.~\refeq{proper}(i)), the families
$\Ascr=\gamma_{Y,Y'}(\Ascr_0)$ and $\Bscr=\gamma_{Y,Y'}(\Bscr_0)$, and any
FG-function $f$ on $\Escr^{n,n'}$ (concerning arbitrary $G,w,\mathfrak{S}$ as
above).
  \end{prop}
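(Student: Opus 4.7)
The plan is to expand both sides of~\refeq{gen_sq} by the definition of the FG-function, group the resulting double sums according to the matching on $Y\sqcup Y'$ induced by the underlying double flow, and then invoke Lemma~\ref{lm:path_switch} to argue that the contribution of each fixed matching depends only on the matching, not on which pair $(A,A')$ produced it. The balanced hypothesis then matches contributions on the two sides term by term.

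Unfolding the definition of $f_w$,
$$
\text{LHS of~\refeq{gen_sq}}=\bigoplus_{(A,A')\in\Ascr}\;\bigoplus_{(\phi,\phi')\in\Dscr(A,A')} w(\phi)\odot w(\phi'),
$$
and similarly for the right-hand side. Every double flow $(\phi,\phi')$ yields a feasible matching $M(\phi,\phi')$ on $Y\sqcup Y'$ (noted just after~\refeq{match1}), so after grouping inner summands by this matching and swapping summation order I arrive at
$$
\text{LHS}=\bigoplus_{M}\;\bigoplus_{\substack{(A,A')\in\Ascr\\ M\in\Mscr(A,A')}}\,W(A,A',M),\quad W(A,A',M):=\bigoplus_{\substack{(\phi,\phi')\in\Dscr(A,A')\\ M(\phi,\phi')=M}} w(\phi)\odot w(\phi').
$$

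The key step is the claim that $W(A,A',M)$ depends only on $M$, not on $(A,A')$. Fix $M$ and two pairs $(A,A'),(B,B')\in\Pi_{Y,Y'}$ both feasible for $M$. Comparing the colorings couple by couple one reads off a unique subset $M_0\subseteq M$ satisfying $A\triangle\, B=\cup(\pi\in M_0)\cap Y$ and $A'\triangle\, B'=\cup(\pi\in M_0)\cap Y'$ (for each couple of $M$ one records whether its endpoints swap colors when passing from $(A,A')$ to $(B,B')$). Applying Lemma~\ref{lm:path_switch} with this $M_0$ to any $(\phi,\phi')\in\Dscr(A,A')$ with $M(\phi,\phi')=M$ produces a pair $(\psi,\psi')\in\Dscr(B,B')$ with $E_\psi\sqcup E_{\psi'}=E_\phi\sqcup E_{\phi'}$, so $w(\psi)\odot w(\psi')=w(\phi)\odot w(\phi')$; and since $E_\psi\triangle\, E_{\psi'}=E_\phi\triangle\, E_{\phi'}$, the matching $M(\psi,\psi')$ is again $M$. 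Applying the inverse exchange recovers $(\phi,\phi')$, so the construction is a weight-preserving bijection, which proves the claim.

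Writing $W_M$ for this common value, the displayed formula for the LHS reduces to $\bigoplus_M W_M$ with $W_M$ repeated once per occurrence of $M$ in the multiset $\Mscr(\Ascr)$, and analogously for the RHS with $\Mscr(\Bscr)$. Since $\Ascr_0,\Bscr_0$ are balanced, $\Mscr(\Ascr)=\Mscr(\Bscr)$, so the two sides agree. The only nonformal ingredient is the weight-preserving bijection supplied by Lemma~\ref{lm:path_switch}; the rest is routine use of commutativity, associativity and distributivity in $\frakS$.
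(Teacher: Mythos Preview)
Your argument is correct and follows essentially the same route as the paper: expand each side as a sum over double flows, stratify by the induced matching $M$, and use Lemma~\ref{lm:path_switch} to produce a weight-preserving bijection between the double flows for $(A,A')$ and those for $(B,B')$ whenever $M$ is feasible for both, so that the balanced hypothesis $\Mscr(\Ascr)=\Mscr(\Bscr)$ finishes the proof. Your packaging via the auxiliary quantity $W(A,A',M)$ and the observation that it depends only on $M$ is a mild reorganization of the paper's bijection-between-configurations phrasing, but the content is the same.
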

  \begin{proof}
~For corresponding $G,w,\mathfrak{S},X,Y,X',Y'$, consider the FG-function
$f=f_w$ on $\Escr^{n,n'}$. The summand concerning $(A,A')\in\Ascr$ in the
L.H.S. of~\refeq{gen_sq} can be expressed via double flows as follows:
  \begin{multline} \label{eq:ff-zeta}
  f(XA|X'A')\odot f(X\bar A|X'\bar A\,') \\
 =\left(\bigoplus\nolimits_{\phi\in\Phi_{XA|X'A'}} w(\phi)\right)
 \odot \left(\bigoplus\nolimits_{\phi'\in\Phi_{X\bar A|X'\bar A\,'}} w(\phi')\right)
    \qquad\qquad\qquad\qquad \\
  =\bigoplus\nolimits_{(\phi,\phi')\in\Dscr(A,A')} w(\phi)\odot w(\phi')\qquad\qquad\qquad\qquad\\
  =\bigoplus\nolimits_{M\in\Mscr(A,A')}
  \bigoplus\nolimits_{(\phi,\phi')\in\Dscr(A,A')\,\colon M(\phi,\phi')=M} w(\phi)\odot w(\phi').
  \end{multline}
The summand concerning $(B,B')\in\Bscr$ in the L.H.S. of~\refeq{gen_sq} is
expressed similarly.

Consider a configuration $(A,A',M)\in\Kscr(\Ascr)$ and suppose $(\phi,\phi')$
is a double flow for $(A,A')$ such that $M(\phi,\phi')=M$ (if it exists). Since
$\Ascr,\Bscr$ are balanced, $(A,A',M)$ is bijective to some configuration
$(B,B',M)$ in $\Bscr$. Since $M$ is a feasible matching for both $(A,A')$ and
$(B,B')$, one can see from conditions~\refeq{match1}(i),(ii) that $(B,B')$ is
obtained from $(A,A')$ by the exchange operation w.r.t. some $M_0\subseteq M$.
Then transforming $(\phi,\phi')$ by use of the paths $P\in\Pscr(\phi,\phi')$
with $\pi(P)\in M_0$, as described in Lemma~\ref{lm:path_switch}, we obtain a
double flow $(\psi,\psi')$ for $(B,B')$ such that $E_\psi\sqcup
E_{\psi'}=E_\phi\sqcup E_{\phi'}$, and therefore $w(\psi)\odot
w(\psi')=w(\phi)\odot w(\phi')$. Moreover, $(\phi,\phi')\mapsto (\psi,\psi')$
gives a bijection between all double flows involved in the configurations in
$\Kscr(\Ascr)$ and those in $\Kscr(\Bscr)$. Now the desired
equality~\refeq{gen_sq} follows by considering the last term
in~\refeq{ff-zeta}.
  \end{proof}

The rest of this section is devoted to additional conventions and
illustrations.

Let $M$ be a planar perfect matching on $Y\sqcup Y'$. Sometimes it will be
convenient to assume that all couples $\pi\in M$ are ordered: if $\pi$ consists
of elements $i,j$, we may write $\pi=ij$ if: either (a) $i,j\in Y$ and $i<j$,
or (b) $i,j\in Y'$ and $i<j$, or (c) $i\in Y$ and $j\in Y'$. We call a couple
$\pi$ in these cases \emph{lower horizontal}, \emph{upper horizontal}, and
\emph{vertical}, respectively. The subsets of such couples in $M$ are denoted
by $\Mlh$, $\Muh$, and $\Mver$, respectively. When $\pi=ij$ is horizontal, we
denote the interval $\{i,i+1,\ldots,j\}$ by $[\pi]$. The fact that $M$ is
planar (cf.~\refeq{match1}(iii)) implies that
  \begin{numitem1}
the set $\Mlh$ is \emph{nested}, which means that for any $\pi,\pi'\in\Mlh$,
the intervals $[\pi]$ and $[\pi']$ are either disjoint or one includes the
other; also for each $\pi\in\Mlh$, all elements of $Y$ within $[\pi]$ are
covered by couples in $\Mlh$; similar properties hold for $\Muh$ and $Y'$.
  \label{eq:nest}
  \end{numitem1}

A proper pair $(A,A')\in \Pi_{Y,Y'}$ along with a feasible matching $M$ for it
can be illustrated by use of either a \emph{circular} diagram or a
\emph{two-level} diagram; the couples in the former are connected by
straight-line segments, and those in the latter by straight-line segments or by
arcs. See the picture where $Y=\{1,2,3,4\}$, $Y'=\{1',2'\}$, $A=\{1,3\}$,
$A'=\{1'\}$, $\Mlh=\{34\}$, $\Muh=\emptyset$, and $\Mver=\{11',22'\}$.

 \begin{center}
 \unitlength=1mm
\begin{picture}(120,40)
  \put(0,0){\begin{picture}(50,35)   % left
 \qbezier(20,0)(2,2)(0,20)
 \qbezier(20,0)(38,2)(40,20)
 \qbezier(0,20)(2,38)(20,40)
 \qbezier(20,40)(38,38)(40,20)
 \put(3,10){\circle{2}}
 \put(37,10){\circle*{2}}
 \put(10,3){\circle*{2}}
 \put(25,1){\circle{2}}
 \put(12.3,38){\circle{2}}
 \put(37,30){\circle*{2}}
{\thicklines
 \put(37,10){\line(-4,-3){12}}
 \put(3,10){\line(1,3){9.2}}
 \put(10,3){\line(1,1){27}}
 }
 \put(0,7){1}
 \put(7,0){2}
 \put(27,-1.5){3}
 \put(38,7){4}
 \put(10,39){$1'$}
 \put(38,31){$2'$}
  \multiput(-10,20)(10,0){6}%
{\put(0,0){\line(1,0){6}}}
 \end{picture}}
 \put(60,20){or}
  \put(80,0){\begin{picture}(50,35)   % right
 \put(0,10){\circle{2}}
 \put(20,10){\circle{2}}
 \put(10.5,30){\circle{2}}
 \put(10,10){\circle*{2}}
 \put(30,10){\circle*{2}}
 \put(20,30){\circle*{2}}
 \put(21,10){\line(1,0){9}}
 \put(0.5,10.5){\line(1,2){9.5}}
 \put(10,10){\line(1,2){10}}
  \put(0,5){1}
 \put(10,5){2}
 \put(20,5){3}
 \put(30,5){4}
 \put(10,33){$1'$}
 \put(20,33){$2'$}
 \end{picture}}
 \end{picture}
  \end{center}

Recall that in the flag flow case we deal with 1-patterns on $[m]$ and
1-families on $Y\subseteq[n]$ (with $|Y|=m$), which are formed by $p$-element
subsets in these sets (cf. Definition~1 in the Introduction). They are
equivalent, respectively, to 2-patterns on $([m],[m'])$ and 2-families on
$(Y,Y')$, where $|Y'|=m'=|p-(m-p)|$. Theorem~\ref{tm:main} implies the
following criterion on Pl\"ucker type relations.
     \begin{corollary} \label{cor:Pluck}
Let $\Ascr_0,\Bscr_0\Subset\binom{[m]}{p}$. Then~\refeq{S_pluck} is a
PSQ-relation (where $\Ascr=\gamma_{Y}(\Ascr_0)$ and
$\Bscr=\gamma_{Y}(\Bscr_0)$) if and only if the 1-patterns $\Ascr_0,\Bscr_0$
are balanced.
  \end{corollary}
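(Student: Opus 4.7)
The plan is to deduce the corollary directly from Theorem~\ref{tm:main} by recognizing the Pl\"ucker (flag) setting as a specialization of the general SQ framework. Without loss of generality assume $p\le m-p$, and set $m':=m-2p$. I would identify a 1-pattern $\Ascr_0\Subset\binom{[m]}{p}$ with the 2-pattern $\tilde\Ascr_0:=\{(A_0,\emptyset)\,\colon A_0\in\Ascr_0\}\Subset\Pi_{m,m'}$, and similarly $\Bscr_0$ with $\tilde\Bscr_0$. Under this identification, balance of 1-patterns coincides with balance of the associated 2-patterns, since the 1-pattern notion on $[m]$ is (by the equivalence noted just before the corollary) exactly the 2-pattern balance on $[m]\sqcup[m']$ specialized to pairs with empty second component.

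Next, for given disjoint $X,Y\subseteq[n]$ with $|Y|=m$ and $x:=|X|$, specialize $X':=[x+p]$ and $Y':=\{x+p+1,\ldots,x+m-p\}$; these are consistent with $(X,Y)$ in the sense of~\refeq{proper}(i). For any $A\subseteq Y$ with $|A|=p$ and the corresponding $A'=\emptyset\subseteq Y'$, one has $X'\cup A'=[x+p]=[|X\cup A|]$ and $X'\cup\bar A\,'=[x+m-p]=[|X\cup\bar A|]$. Hence the summand $f(XA|X'A')\odot f(X\bar A|X'\bar A\,')$ appearing in~\refeq{gen_sq} coincides with the flag product $f(X\cup A)\odot f(X\cup\bar A)$ in~\refeq{S_pluck}, and similarly on the $\Bscr$ side. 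Thus the PSQ-relation induced by $\Ascr_0,\Bscr_0$ is precisely the SQ-relation induced by $\tilde\Ascr_0,\tilde\Bscr_0$ restricted to this choice of $(X',Y')$.

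For the ``if'' direction, if $\Ascr_0,\Bscr_0$ are balanced then so are $\tilde\Ascr_0,\tilde\Bscr_0$, and Proposition~\ref{pr:balance-sq} yields validity of~\refeq{gen_sq} for all consistent quadruples $X,Y,X',Y'$; specializing as above produces the desired PSQ-relation. For the ``only if'' direction I would argue by contrapositive: if $\Ascr_0,\Bscr_0$ are not balanced, then neither are $\tilde\Ascr_0,\tilde\Bscr_0$, and the (i)$\Rightarrow$(ii) half of Theorem~\ref{tm:main} (proved in Section~\ref{sec:(i)-(ii)}), in the strong form that furnishes, for any prescribed consistent $X,Y,X',Y'$, a planar network with integer weights violating~\refeq{gen_sq}, produces such a network in particular for the flag specialization above. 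That network witnesses the failure of~\refeq{S_pluck}.

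The only place where one might stumble is verifying that the counterexample construction of the (i)$\Rightarrow$(ii) direction of Theorem~\ref{tm:main} can be invoked for the specific $(X',Y')$ forced by the flag setup; this is not a genuine obstacle because Theorem~\ref{tm:main} is explicitly stated with a counterexample valid for any prescribed consistent quadruple. Beyond that, the proof is a matter of carefully tracing definitions rather than of new combinatorial work.
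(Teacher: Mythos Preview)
Your argument is correct and follows the same route the paper intends: the corollary is obtained directly from Theorem~\ref{tm:main} (together with the strong form, Theorem~\ref{tm:nonbalance}) by realizing the Pl\"ucker setup as the particular consistent quadruple $X,Y,X',Y'$ indicated in the Introduction, and by using that the notion of balanced 1-patterns is \emph{defined} as the specialization of the 2-pattern balance. The paper itself does not write out a proof, simply stating that Theorem~\ref{tm:main} implies the corollary; you have correctly supplied the missing details, including the key observation that Theorem~\ref{tm:nonbalance} produces a counterexample for \emph{any} prescribed consistent $X,Y,X',Y'$, hence in particular for the flag specialization.

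One cosmetic remark: you normalize to $p\le m-p$ and take $A'=\emptyset$, whereas the paper (in the paragraph following the corollary) normalizes to $p\ge q$ and takes $A'=Y'$; the two conventions are equivalent under the color swap $(A,A')\leftrightarrow(\bar A,\bar A\,')$, which preserves feasible matchings by the symmetry of conditions~\refeq{match1}(i)--(iii). This is not a gap, just a harmless difference in bookkeeping.
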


Here the notion of \emph{balanced} 1-families $\Ascr,\Bscr$ (in particular,
1-patterns) comes from the one given for 2-families and is specified as
follows. Let $q:=m-p$ and assume, w.l.o.g., that $p\ge q$. A \emph{feasible
matching} for a set $A\in\binom{Y}{p}$ (or for the partition $(A,\bar A)$ of
$Y$) is now defined to be a set $\tilde M$ of pairs (couples) in $Y$ such that
    \begin{numitem1}
  \begin{itemize}
\item[(i)] $|\tilde M|=q$, the couples in $\tilde M$ are mutually disjoint,
and $|\pi\cap A|=|\pi\cap\bar A|=1$ for each $\pi\in \tilde M$;
\item[(ii)] $\tilde M$ is \emph{nested}, and for each $\pi\in \tilde M$, all elements
of $[\pi]$ are covered by $\tilde M$;
  \end{itemize}
   \label{eq:match2}
   \end{numitem1}
cf.~\refeq{nest}. In other words, $\tilde M$ is just the set $\Mlh$ in the
corresponding planar perfect matching $M$ for $Y\sqcup Y'$. In view of $|A|=p$,
~$|\bar A|=q$ and $|Y'|=p-q$, we have $\Muh=\emptyset$ and $|\Mver|=p-q$. In
particular, the elements of $Y'$ are colored white, provided that the elements
of $A$ and $\bar A$ are white and black, respectively.

For illustrations in the flag case, we will use \emph{flat} (or
\emph{one-level}) \emph{diagrams}. An example of such a diagram and its
corresponding two-level diagram are drawn in the picture; here $Y=[7]$,
$A=\{1,3,5,6\}$, $\tilde M=\{14,23,67\}$, and $Y'=\{1'\}$.
 \begin{center}
 \unitlength=1mm
\begin{picture}(150,23)
  \put(0,5){\begin{picture}(60,10)   % left
 \put(0,5){\circle{2}}
 \put(20,5){\circle{2}}
 \put(40,5){\circle{2}}
 \put(50,5){\circle{2}}
 \put(10,5){\circle*{2}}
 \put(30,5){\circle*{2}}
 \put(60,5){\circle*{2}}
 \put(0,0){1}
 \put(10,0){2}
 \put(20,0){3}
 \put(30,0){4}
 \put(40,0){5}
 \put(50,0){6}
 \put(60,0){7}
 \qbezier(10,5)(15,9)(20,5)
 \qbezier(50,5)(55,9)(60,5)
 \qbezier(0,5)(15,15)(30,5)
   \end{picture}}
 \put(70,10){$\Longleftrightarrow$}
  \put(85,0){\begin{picture}(60,10)   % left
 \put(0,5){\circle{2}}
 \put(20,5){\circle{2}}
 \put(40,5){\circle{2}}
 \put(50,5){\circle{2}}
 \put(10,5){\circle*{2}}
 \put(30,5){\circle*{2}}
 \put(60,5){\circle*{2}}
 \put(0,0){1}
 \put(10,0){2}
 \put(20,0){3}
 \put(30,0){4}
 \put(40,0){5}
 \put(50,0){6}
 \put(60,0){7}
 \qbezier(10,5)(15,9)(20,5)
 \qbezier(50,5)(55,9)(60,5)
 \qbezier(0,5)(15,15)(30,5)
 \put(30,20){\circle{2}}
 \put(40,5){\line(-2,3){10}}
 \put(32,20){$1'$}
   \end{picture}}

 \end{picture}
  \end{center}

%----------------------- Sec. 4

\section{\Large Examples of stable quadratic relations}  \label{sec:relat}

In this section we illustrate the method described in the previous section by
demonstrating several classes of stable quadratic relations on FG-functions.
According to Proposition~\ref{pr:balance-sq}, once we are able to show that one
or another pair of families $\Ascr,\Bscr$ is balanced, we can declare that
relation~\refeq{gen_sq} involving these families is stable.

As before, when visualizing a proper pair $(C\subseteq[m],C'\subseteq[m'])$
(i.e., satisfying~\refeq{proper}(ii)), we will refer to the elements of $C$ and
$C'$ as white, and to the elements of their complements $\bar C=[m]-C$ and
$\bar C\,'=[m']-C'$ as black.

Most of examples below (namely, those in items~1--5) concern PSQ-relations for
flag-flow-determined functions $f:2^{[n]}\to\mathfrak{S}$. In these cases, we
deal with 1-patterns $\Ascr_0,\Bscr_0\subseteq \binom{[m]}{p}$ for some $p<m$
and set $q:=m-p$. Also, considering one or another white-black partition
$(C,\bar C)$ of $[m]$ (with $|C|=p$) and a feasible matching $M$ for it, we
illustrate the configuration $(C,M)$ by a flat diagram (introduced in the end
of the previous section). The set of feasible matchings for $(C,\bar C)$ is
denoted by $\Mscr(C)$.
\medskip

\noindent \textbf{1.} When $m=3$ and $p=2$, the collection $\binom{[m]}{p}$
consists of three 2-element sets $C$, namely, $12, 13, 23$, and their
complements $\bar C$ are the 1-element sets $3,2,1$, respectively. Since $q=1$,
a feasible matching consists of a unique couple. The sets 12 and 23 admit only
one feasible matching each, namely, $\Mscr(12)=\{\{23\}\}$ and
$\Mscr(23)=\{\{12\}\}$, whereas 13 has two feasible matchings, namely,
$\Mscr(13)=\{\{12\},\{23\}\}$. Therefore, the 1-patterns $\Ascr_0:=\{13\}$ and
$\Bscr_0:=\{12,23\}$ are balanced. The corresponding configurations and
bijection are illustrated in the picture.

 \begin{center}
  \unitlength=1mm
  \begin{picture}(140,20)
  \put(20,5){\circle{2}}
  \put(20,15){\circle{2}}
  \put(40,5){\circle{2}}
  \put(40,15){\circle{2}}
  \put(90,15){\circle{2}}
  \put(100,5){\circle{2}}
  \put(100,15){\circle{2}}
  \put(110,5){\circle{2}}
  \put(30,5){\circle*{2}}
  \put(30,15){\circle*{2}}
  \put(90,5){\circle*{2}}
  \put(110,15){\circle*{2}}
  \multiput(50,5)(0,10){2}%
{\put(0,0){\line(1,0){30}}
  \put(0,0){\line(2,1){4}}
  \put(0,0){\line(2,-1){4}}
  \put(30,0){\line(-2,1){4}}
  \put(30,0){\line(-2,-1){4}}}
  \multiput(20,5)(0,0){1}%
{\qbezier(0,0)(5,4)(10,0)}
  \multiput(30,15)(0,0){1}%
{\qbezier(0,0)(5,4)(10,0)}
  \multiput(90,5)(0,0){1}%
{\qbezier(0,0)(5,4)(10,0)}
  \multiput(100,15)(0,0){1}%
{\qbezier(0,0)(5,4)(10,0)}
  \put(20,0){1}
  \put(30,0){2}
  \put(40,0){3}
  \put(90,0){1}
  \put(100,0){2}
  \put(110,0){3}
  \put(15,3){\line(0,1){14}}
  \put(0,8){$A=13$}
  \put(117,3){\line(0,1){5}}
  \put(120,4){$B=23$}
  \put(117,13){\line(0,1){5}}
  \put(120,14){$B=12$}
  \end{picture}
   \end{center}

This gives rise to the \emph{P3-relation} (generalizing AP3- and
TP3-relations~\refeq{AP3},\refeq{TP3}): for a triple $i<j<k$ (forming $Y$) and
$X\subseteq[n]-\{i,j,k\}$,
   \begin{equation} \label{eq:SP3}
    f(Xik)\odot f(Xj)=(f(Xij)\odot f(Xk))\oplus(f(Xjk)\odot f(Xi)).
    \end{equation}

 \medskip
\noindent \textbf{2.} Let $p=q=2$. Take the 1-patterns $\Ascr_0:=\{13\}$ and
$\Bscr_0:=\{12,14\}$ in $\binom{[4]}{2}$. One can see that each of 12 and 14
admits a unique feasible matching: $\Mscr(12)=\{\{14,23\}\}$ and
$\Mscr(14)=\{\{12,34\}\}$, whereas $\Mscr(13)$ consists of two feasible
matchings: just those $\{14,23\}$ and $\{12,34\}$. Thus, $\Ascr_0,\Bscr_0$ are
balanced; see the picture where the couples (arcs) involved in the
corresponding exchange operations are marked with crosses.

 \begin{center}
  \unitlength=1mm
  \begin{picture}(150,20)
  \put(20,5){\circle{2}}
  \put(20,15){\circle{2}}
  \put(40,5){\circle{2}}
  \put(40,15){\circle{2}}
  \put(90,15){\circle{2}}
  \put(90,5){\circle{2}}
  \put(100,15){\circle{2}}
  \put(120,5){\circle{2}}
  \put(30,5){\circle*{2}}
  \put(30,15){\circle*{2}}
  \put(50,5){\circle*{2}}
  \put(50,15){\circle*{2}}
  \put(100,5){\circle*{2}}
  \put(110,5){\circle*{2}}
  \put(110,15){\circle*{2}}
  \put(120,15){\circle*{2}}
  \multiput(60,5)(0,10){2}%
{\put(0,0){\line(1,0){20}}
  \put(0,0){\line(2,1){4}}
  \put(0,0){\line(2,-1){4}}
  \put(20,0){\line(-2,1){4}}
  \put(20,0){\line(-2,-1){4}}}
  \multiput(20,5)(0,0){1}%
{\qbezier(0,0)(5,4)(10,0)}
  \multiput(40,5)(0,0){1}%
{\qbezier(0,0)(5,4)(10,0)}
  \multiput(30,15)(0,0){1}%
{\qbezier(0,0)(5,4)(10,0)}
  \multiput(90,5)(0,0){1}%
{\qbezier(0,0)(5,4)(10,0)}
  \multiput(110,5)(0,0){1}%
{\qbezier(0,0)(5,4)(10,0)}
  \multiput(100,15)(0,0){1}%
{\qbezier(0,0)(5,4)(10,0)}
  \multiput(20,15)(0,0){1}%
{\qbezier(0,0)(15,10)(30,0)}
  \multiput(90,15)(0,0){1}%
{\qbezier(0,0)(15,10)(30,0)}
  \put(44,6){x}
  \put(34,16){x}
  \put(114,6){x}
  \put(104,16){x}
  \put(20,0){1}
  \put(30,0){2}
  \put(40,0){3}
  \put(50,0){4}
  \put(90,0){1}
  \put(100,0){2}
  \put(110,0){3}
  \put(120,0){4}
  \put(15,3){\line(0,1){14}}
  \put(0,8){$A=13$}
  \put(127,3){\line(0,1){5}}
  \put(130,4){$B=14$}
  \put(127,13){\line(0,1){5}}
  \put(130,14){$B=12$}
  \end{picture}
   \end{center}

As a consequence, we obtain the \emph{P4-relation} (generalizing~\refeq{AP4}
and its tropical counterpart): for $i<j<k<\ell$ and
$X\subseteq[n]-\{i,j,k,\ell\}$,
   \begin{equation} \label{eq:SP4}
    f(Xik)\odot f(Xj\ell)=(f(Xij)\odot f(Xk\ell))\oplus(f(Xi\ell)\odot f(Xjk)).
    \end{equation}

 \medskip
\noindent \textbf{3.} As one more illustration of the method, let us consider
one particular case for $m=5$ and $p=3$. Put $\Ascr_0:=\{135\}$ and
$\Bscr_0:=\{234,125,145\}$. One can check that $\Mscr(234)=\{\{12,45\}\}$,
~$\Mscr(125):=\{\{14,23\},\{23,45\}\}$, ~$\Mscr(145)=\{\{12,34\},\{25,34\}\}$,
and that $\Mscr(135)$ consists just of the five matchings occurring in those
three collections. Therefore, $\Ascr_0,\Bscr_0$ are balanced. The corresponding
configurations and bijection are shown in the picture.

 \begin{center}
  \unitlength=1mm
  \begin{picture}(140,47)
\multiput(20,5)(0,10){5}%
  {\put(0,0){\circle{2}}
  \put(8,0){\circle*{2}}
  \put(16,0){\circle{2}}
  \put(24,0){\circle*{2}}
  \put(32,0){\circle{2}}}
\multiput(87,5)(0,10){2}%
  {\put(0,0){\circle{2}}
  \put(8,0){\circle*{2}}
  \put(16,0){\circle*{2}}
  \put(24,0){\circle{2}}
  \put(32,0){\circle{2}}}
\multiput(87,25)(0,10){2}%
  {\put(0,0){\circle{2}}
  \put(8,0){\circle{2}}
  \put(16,0){\circle*{2}}
  \put(24,0){\circle*{2}}
  \put(32,0){\circle{2}}}
\multiput(87,45)(0,10){1}%
  {\put(0,0){\circle*{2}}
  \put(8,0){\circle{2}}
  \put(16,0){\circle{2}}
  \put(24,0){\circle{2}}
  \put(32,0){\circle*{2}}}
  \multiput(60,5)(0,10){5}%
{\put(0,0){\line(1,0){20}}
  \put(0,0){\line(2,1){4}}
  \put(0,0){\line(2,-1){4}}
  \put(20,0){\line(-2,1){4}}
  \put(20,0){\line(-2,-1){4}}}
  \multiput(36,5)(67,0){2}%
{\qbezier(0,0)(4,3)(8,0)
  \put(3,0.5){x}}
  \multiput(20,15)(67,0){2}%
{\qbezier(0,0)(4,3)(8,0)}
  \multiput(36,15)(67,0){2}%
{\qbezier(0,0)(4,3)(8,0)
  \put(3,0.5){x}}
  \multiput(28,25)(67,0){2}%
{\qbezier(0,0)(4,3)(8,0)
  \put(3,0.5){x}}
  \multiput(44,25)(67,0){2}%
{\qbezier(0,0)(4,3)(8,0)}
  \multiput(28,35)(67,0){2}%
{\qbezier(0,0)(4,3)(8,0)
  \put(3,0.5){x}}
  \multiput(20,45)(67,0){2}%
{\qbezier(0,0)(4,3)(8,0)
  \put(3,0.5){x}}
  \multiput(44,45)(67,0){2}%
{\qbezier(0,0)(4,3)(8,0)
  \put(3,0.5){x}}
  \multiput(28,5)(67,0){2}%
{\qbezier(0,0)(12,8)(24,0)}
  \multiput(20,35)(67,0){2}%
{\qbezier(0,0)(12,8)(24,0)}
  \multiput(-1,0)(67,0){2}%
  {\put(20,0){1}
  \put(28,0){2}
  \put(36,0){3}
  \put(44,0){4}
  \put(52,0){5}}
  \put(15,3){\line(0,1){44}}
  \put(-2,23){$A=135$}
  \put(127,3){\line(0,1){14}}
  \put(130,9){$B''=145$}
  \put(127,23){\line(0,1){14}}
  \put(130,29){$B'=125$}
  \put(127,43){\line(0,1){5}}
  \put(130,44){$B=234$}
  \end{picture}
   \end{center}

This implies a particular PSQ-relation on quintuples: for $i<j<k<\ell<r$ and
$X\subseteq [n]-\{i,j,k,\ell,r\}$,
   \begin{multline*}
    f(Xikr)\odot f(Xj\ell)
    =(f(Xjk\ell)\odot f(Xir))\oplus(f(Xijr)\odot f(Xk\ell))\\
      \oplus(f(Xi\ell r)\odot f(Xjk)).\quad
    \end{multline*}

 \noindent \textbf{4.}
The next illustration concerns a wide class of balanced 1-patterns for $m>p\ge
m-p=:q$; it includes the 1-patterns indicated in items 1 and 2 as special
cases.

The 1-pattern $\Ascr_0$ contains a distinguished set $A_0\in\binom{[m]}{p}$.
Fix a nonempty subset $Z\subseteq\bar A_0$ and consider the collection
   \begin{equation} \label{eq:Cscript}
   \Cscr:=\{C\subset[m] \,\colon |C|=p,\;C\cap \bar A_0=Z\}.
   \end{equation}
For a subset $C\subseteq[n]$, we will denote by $\Sigma(C)$ the number
$\sum(i\in C)$. Now define
   \begin{gather}
   \Ascr_0:=\{A_0\}\cup\{ A\in\Cscr\,\colon \Sigma(A)-\Sigma(A_0)+|Z|\;\; \mbox{odd}\}
   \quad\mbox{and} \nonumber \\
   \Bscr_0:=\{ B\in\Cscr\,\colon\Sigma(B)-\Sigma(A_0)+|Z|\;\; \mbox{even}\}.
   \label{eq:AA4}
   \end{gather}
In particular, $\Ascr_0\cap\Bscr_0=\emptyset$. We assert the following:
   \begin{lemma} \label{lm:AA4}
The pair $\Ascr_0,\Bscr_0$ in~\refeq{AA4} is balanced.
  \end{lemma}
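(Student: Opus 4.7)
My plan is to verify the balance condition directly from its definition. By Corollary~\ref{cor:Pluck} it suffices to show $\Mscr(\Ascr_0)=\Mscr(\Bscr_0)$ as multisets of feasible matchings, so I fix an arbitrary feasible matching $\tilde M$ on $[m]$ satisfying~\refeq{match2} and verify that the multiplicity of $\tilde M$ is the same on both sides. Let $W\subseteq[m]$ denote the elements covered by $\tilde M$ and $U:=[m]\setminus W$, so $|U|=p-q$.

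The first ingredient I will need is a parity lemma: in any such nested matching, each pair $\{a,b\}$ with $a<b$ has $b-a$ odd, since the interval $[a,b]$ is entirely covered by pairs of $\tilde M$ and so has even cardinality $b-a+1$. This will control the parity of $\Sigma(A)-\Sigma(A_0)+|Z|$ under local flips of a single pair. Next I classify the pairs of $\tilde M$ relative to $A_0$: a pair is of type WW, WB, or BB according to whether both, exactly one, or neither of its endpoints lie in $A_0$, and I let $k$ denote the number of WW pairs. Recall that $C\in\binom{[m]}{p}$ is compatible with $\tilde M$ iff $U\subseteq C$ and $C$ contains exactly one element of each pair of $\tilde M$.

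In Case 1, when $\tilde M$ is feasible for $A_0$ itself, $U\subseteq A_0$ and every pair is of type WB; the constraint $C\cap\bar A_0=Z$ then forces a unique compatible $C\in\Cscr$, obtained from $A_0$ by flipping exactly those $|Z|$ pairs whose $\bar A_0$-element lies in $Z$. Each such flip shifts $\Sigma$ by an odd quantity by the parity lemma, so $\Sigma(C)-\Sigma(A_0)+|Z|$ is even and $C\in\Bscr_0$; both sides of the balance then have count $1$.

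In Case 2, when $\tilde M$ is not feasible for $A_0$, the delicate step — which I expect to be the main obstacle — is to show that $k\geq 1$. I plan to deduce this from the double count $|\bar A_0\cap W|=\#\mathrm{WB}+2\#\mathrm{BB}$ together with $\#\mathrm{WW}+\#\mathrm{WB}+\#\mathrm{BB}=q=|\bar A_0|$: assuming $k=0$ yields $|\bar A_0\cap U|=-\#\mathrm{BB}\geq 0$, forcing $\#\mathrm{BB}=0$ and $U\subseteq A_0$, which would make $\tilde M$ feasible for $A_0$, a contradiction. Once $k\geq 1$, the compatible sets in $\Cscr$ are parametrized by free choices on the $k$ WW pairs (the WB and BB pairs being forced by $C\cap\bar A_0=Z$): if the forced part is inconsistent there are none (trivially balanced); otherwise there are exactly $2^k$, and flipping a single WW pair changes $\Sigma(C)$ by an odd quantity, so these $2^k$ sets split evenly by parity and contribute $2^{k-1}$ to each of $\Ascr_0\cap\Cscr$ and $\Bscr_0\cap\Cscr$, completing the balance.
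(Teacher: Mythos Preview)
Your argument is correct and close in spirit to the paper's, though organized differently. The paper constructs an explicit involution on configurations $(C,M)$: given $C\in\Cscr$ with $M\in\Mscr(C)$, if some couple of $M$ lies entirely in $A_0$ (your WW pairs) it flips the \emph{leftmost} such couple to produce a partner $D\in\Cscr$ of opposite parity; if no such couple exists (your Case~1) it flips all $Z$-meeting couples to pair $(C,M)$ with $(A_0,M)$. You instead fix $\tilde M$ and count multiplicities: in Case~2 you observe there are $2^k$ compatible $C\in\Cscr$ and a flip of any one WW pair is a parity-switching involution, giving $2^{k-1}$ on each side; in Case~1 you produce the unique compatible $C\in\Cscr$ and check directly that it lands in $\Bscr_0$.

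Both proofs rest on the same two observations: the odd-difference parity lemma for nested couples, and the dichotomy ``some couple of $M$ is contained in $A_0$'' versus ``every couple meets $\bar A_0$ (so $M\in\Mscr(A_0)$)''. Your counting formulation avoids the need to single out a canonical WW pair, at the cost of not exhibiting the bijection explicitly; the paper's involution is more concrete but requires the ``minimal $i$'' rule to make the map well-defined. One small remark: in your Case~2, the ``forced part is inconsistent'' clause should also cover the requirement $U\cap\bar A_0\subseteq Z$ (coming from $U\subseteq C$), not only the BB-pair condition, but this is absorbed into your phrasing and does not affect correctness.
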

  \begin{proof}
~Consider $C\in\Cscr$ and $M\in\Mscr(C)$. We describe a rule which associates
to $(C,M)$ another configuration $(D,M)$, aiming to obtain the desired
bijection between $\Kscr(\Ascr_0)$ and $\Kscr(\Bscr_0)$.

Since $M$ is feasible and $p\ge q$, we have $|M|=q=|\bar A_0|$. This implies
that exactly one of the two cases is possible: (i) there is a couple $\pi\in M$
with both elements in $A_0$; and (ii) each $\pi\in M$ satisfies $|\pi\cap
A_0|=1$ (whence $|\pi\cap \bar A_0|=1$ and $M$ covers $\bar A_0$).
\smallskip

In case~(i), take the couple $\pi=ij\in M$ ($i<j$) such that $i,j\in A_0$ and
$i$ is minimum under this property. Let $D:=C\triangle\, \pi$. Then $D\cap \bar
A_0=C\cap \bar A_0=Z$, whence $D\in\Cscr$. Also the interval $[\pi]$ is
partitioned into couples (cf.~\refeq{match2}(ii)), implying that $j-i$ is odd.
Hence the numbers $\Sigma(C)-\Sigma(A_0)+|Z|$ and $\Sigma(D)-\Sigma(A_0)+|Z|$
have different parity, and therefore, $C,D$ belong to different collections
among $\Ascr_0,\Bscr_0$. Obviously, $M$ is a feasible matching for $D$, the
couple $\pi$ satisfies the above minimality property for $D$, and applying to
$D$ the exchange operation w.r.t. $\pi$ returns $C$. We associate the
configurations $(C,M)$ and $(D,M)$ to each other.
\smallskip

In case (ii), each couple of $M$ has one element in $A_0$ and the other in
$\bar A_0$. Let $M_Z$ be the set of $\pi\in M$ such that $\pi\cap Z\ne
\emptyset$ and let $Q:=\cup(\pi\in M_Z$). Then the set $D:=C\triangle\, Q$
satisfies $|D|=p$ and $D\cap \bar A_0=\emptyset$. This means that $D=A_0$. Also
$M\in\Mscr(A_0)$ and $C=A_0\triangle\, Q$. Since $j-i$ is odd for each $ij\in
M$, the numbers $\Sigma(C)-\Sigma(A_0)$ and $Z$ have the same parity. So $C\in
\Bscr_0$. We associate $(C,M)$ and $(A_0,M)$ to each other. Conversely, for
each $M'\in\Mscr(A_0)$, let $C':=A_0\triangle\, Q'$, where $Q':=\cup(\pi\in
M'\,\colon \pi\cap Z\ne\emptyset)$. Then $Q'\cap \bar A_0=Z$, ~$C'\in \Bscr_0$,
and the above construction associates $(C',M')$ with $(A_0,M')$.
\smallskip

This gives the desired bijection between $\Kscr(\Ascr_0)$ and $\Kscr(\Bscr_0)$.
  \end{proof}

\noindent \textbf{Remark 4.} ~(i) Consider $m=3$, $p=2$, $A_0:=12$, and
$Z:=\{3\}$. Then the collection $\Cscr$ as in~\refeq{Cscript} consists of the
sets $13,23$, and we have $\Ascr_0=\{12,23\}$ and $\Bscr_0=\{13\}$. These
1-patterns correspond to those in item~1. (ii) When $m=4$, $p=2$, $A_0:=12$,
and $Z:=\{3\}$, we obtain $\Ascr_0=\{12,23\}$ and $\Bscr_0=\{13\}$. They
generate the same PSQ-relations as the balanced 1-patterns $\{13\}$,
$\{12,14\}$ in item~2.
\medskip

The 1-patterns $\Ascr_0,\Bscr_0$ as in~\refeq{AA4} give rise to the following
PSQ-relation on FG-functions $f$; for brevity, it is exposed when
$\mathfrak{S}$ is a ring. Let $I,J\subset[n]$ and $|I|\ge |J|$. Fix $Z\subseteq
J-I$. Then
  \begin{equation} \label{eq:Fl-manif}
 f(I)f(J) =\sum_K (-1)^{a+{\rm Inv}((I-K)\cup Z,(J-Z)\cup K)}
 f((I-K)\cup Z)\, f((J-Z)\cup K),
   \end{equation}
where: the sum is over all $K\subseteq I-J$ with $|K|=|Z|$; ~${\rm Inv}(I',J')$
denotes the number of pairs $(i,j)\in I'\times J'$ with $i>j$ (inversions); and
$a:=|Z|+{\rm Inv}(I-J,J-I)$. In this case one should set $X:=I\cap J$,
$Y:=I\triangle\,J$, $m:=|Y|$, $p:=|I-J|$, and $A_0:=I-J$.

Relations similar to~\refeq{Fl-manif} (but possibly given in a different form)
appear in a characterization of flag manifolds ${\rm Fl}={\rm
Fl}^{d_1,\ldots,d_r}(\mathbb{C}^n)$, where $d_1<\ldots<d_r\le n$;
cf.~\cite[Ch.9]{Fu}. In this case one should take all subsets $I,J\subseteq[n]$
and $Z\subseteq J-I$ with $|I|=d_i$, $|J|=d_j$, $i\ge j$; then~\refeq{Fl-manif}
generate the ideal of polynomials with zero values on ${\rm Fl}$ canonically
embedded in the corresponding product of projective spaces.
  \Xcomment{
As is mentioned in~\cite{LNT}, certain special cases of~\refeq{Fl-manif} admit
analogues (extensions) in the quantized Pl\"ucker algebra (in particular, when:
(i) $Z=J-I$; (ii) $|I|-1\ge |J|+1$ and $|Z|=1$; (iii) $I,J$ are such that $J-I$
has a (unique) partition $(J_1,J_2)$ satisfying $J_1< I-J< J_2$ (such $I,J$ are
called \emph{weakly separated}), and $Z=J_1$ ; here we write $A< B$ if $i< j$
for all $i\in A$ and $j\in B$).
  }

 \medskip
 \noindent \textbf{5.}
One more representable class of balanced 1-patterns for $p<m\le n$ with $p\ge
m-p=:q$ is obtained by slightly modifying the previous construction.

Fix a set $Z\subset[m]$ with $0<|Z|\le q-1$ and a subset $Z'\subseteq Z$. Form
the collection
   $$
   \Cscr:=\{C\subset[m]\, \colon |C|=p,\; C\cap Z=Z'\}.
   $$
Partition $\Cscr$ into two 1-patterns
   \begin{equation} \label{eq:AA5}
\Ascr_0:=\{A\in\Cscr\,\colon \Sigma(A)\;\;\mbox{odd}\}\quad \mbox{and}\quad
 \Bscr_0:=\{A\in\Cscr\,\colon \Sigma(A)\;\;\mbox{even}\}.
   \end{equation}

 \begin{lemma} \label{lm:AA5}
The pair $\Ascr_0,\Bscr_0$ in~\refeq{AA5} is balanced.
  \end{lemma}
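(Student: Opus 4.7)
The plan is to mimic the involution-style argument of Lemma~\ref{lm:AA4}, but now swapping colors along a single well-chosen couple of the matching. Specifically, I will construct an involution $\sigma$ on the union $\Kscr(\Ascr_0)\sqcup\Kscr(\Bscr_0)$ that fixes the matching $M$ and exchanges the parity of $\Sigma$, thereby establishing the bijection required by the balancedness definition.

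Step 1: Given $(C,M)$ with $C\in\Cscr$ and $M\in\Mscr(C)$, observe that $|M|=q$ and that each couple $\pi\in M$ consists of two elements. At most $|Z|$ couples of $M$ can meet $Z$, hence from the hypothesis $|Z|\le q-1$ there is at least one couple $\pi\in M$ with $\pi\cap Z=\emptyset$. Define $\pi_C$ to be the lexicographically smallest such couple, and set $D:=C\triangle\pi_C$.

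Step 2: Check that $(D,M)$ is again a configuration in $\Kscr(\Ascr_0\cup\Bscr_0)$, i.e.\ $D\in\Cscr$ and $M\in\Mscr(D)$. Since $\pi_C\cap Z=\emptyset$, we have $D\cap Z=C\cap Z=Z'$, and $|D|=|C|=p$ because the exchange operation on a feasible couple preserves cardinality (one white and one black element are swapped in color). The same swap shows that $M$ still satisfies~\refeq{match2} with respect to $D$: the nestedness and the covering property depend only on $M$, while the color condition $|\pi\cap D|=|\pi\cap\bar D|=1$ for each $\pi\in M$ is preserved (for $\pi_C$ both colors flip; for any other $\pi\in M$ nothing changes).

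Step 3: Verify that $\sigma\colon(C,M)\mapsto(D,M)$ is an involution. In the configuration $(D,M)$, the set of couples $\pi\in M$ disjoint from $Z$ is exactly the same subset of $M$ as for $(C,M)$ (it depends only on $M$ and $Z$), so its lex-smallest element is again $\pi_C$. Hence $\sigma(D,M)=(D\triangle\pi_C,M)=(C,M)$.

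Step 4: Check that $\sigma$ swaps parities. Writing $\pi_C=ij$ with $i<j$, the nested-and-covering property~\refeq{match2}(ii) implies that the interval $[\pi_C]=\{i,i+1,\ldots,j\}$ has even cardinality, so $j-i$ is odd. Since $D$ is obtained from $C$ by swapping the membership of exactly $i$ and $j$, we get $\Sigma(D)-\Sigma(C)=\pm(j-i)$, an odd number. Therefore $C\in\Ascr_0$ iff $D\in\Bscr_0$, and $\sigma$ is a fixed-point-free involution that restricts to a bijection $\Kscr(\Ascr_0)\to\Kscr(\Bscr_0)$ preserving the matching $M$. By the definition in the previous section, $\Ascr_0,\Bscr_0$ are balanced.

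The only delicate point is Step~1, i.e.\ the existence of a couple in $M$ disjoint from $Z$; this is precisely where the hypothesis $|Z|\le q-1$ is used, since the $q$ couples of $M$ cannot all intersect a set of size at most $q-1$. Everything else is a routine verification using the structural properties of feasible matchings stated in~\refeq{match2}.
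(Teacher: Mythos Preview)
Your proof is correct and follows essentially the same approach as the paper: pick the canonical couple of $M$ disjoint from $Z$ (the paper chooses the one with minimum first element, which coincides with your lexicographic choice since couples in a matching are disjoint), swap colors along it, and use the oddness of $j-i$ to flip parity. Your write-up is more explicit about verifying the involution and the feasibility of $M$ for $D$, but the underlying argument is identical.
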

  \begin{proof}
~Let $C\in\Cscr$ and $M\in\Mscr(C)$. Since $|M|=q>|Z|$, there exists a couple
$\pi=ij\in M$ ($i<j$) with both elements in $[m]-Z$; take such a $\pi$ so that
$i$ be minimum. Form $D:=C\triangle\,\pi$. Then $C,D$ belong to different
1-patterns among $\Ascr_0,\Bscr_0$ (since $j-i$ is odd), and we can associate
$(C,M)$ and $(D,M)$ to each other, taking into account that $M\in\Mscr(D)$ and
that the choice of $\pi$ depends only on $M$.
  \end{proof}

This lemma gives rise to the corresponding class of PSQ-relations; we omit it
here. (In fact, such relations can be derived from those in item~4 when
$\mathfrak{S}$ is a ring.) \medskip

In the rest of this section we give simple examples of balanced families in the
non-flag case. Now we deal with disjoint sets $X,Y\subseteq [n]$ and disjoint
sets $X',Y'\subseteq[n']$, denote $m:=|Y|$ and $m':=|Y'|$, and consider
2-patterns formed by proper pairs for $([m],[m'])$ (cf.~\refeq{proper}).
Corresponding matchings will be illustrated by use of \emph{two-level diagrams}
(see the end of Section~\ref{sec:balan}) in which the white/black elements of
$[m]$ (resp., $[m']$) are disposed in the lower (resp., upper) horizontal line.
 \medskip

 \noindent \textbf{6.}
The picture below shows an example of balanced homogeneous 2-patterns
$\Ascr_0,\Bscr_0$.
 \begin{center}
  \unitlength=1mm
  \begin{picture}(140,54)
\multiput(20,0)(0,16){2}%
  {\put(0,0){\circle*{2}}
  \put(12,0){\circle{2}}
  \put(24,0){\circle{2}}
  \put(0,6){\circle{2}}
  \put(12,6){\circle*{2}}
  \put(24,6){\circle{2}}}
\multiput(20,32)(0,16){2}%
  {\put(0,0){\circle{2}}
  \put(12,0){\circle{2}}
  \put(24,0){\circle*{2}}
  \put(0,6){\circle{2}}
  \put(12,6){\circle*{2}}
  \put(24,6){\circle{2}}}
\multiput(86,0)(0,16){2}%
  {\put(0,0){\circle{2}}
  \put(12,0){\circle*{2}}
  \put(24,0){\circle{2}}
  \put(0,6){\circle*{2}}
  \put(12,6){\circle{2}}
  \put(24,6){\circle{2}}}
\multiput(86,32)(0,16){2}%
  {\put(0,0){\circle{2}}
  \put(12,0){\circle*{2}}
  \put(24,0){\circle{2}}
  \put(0,6){\circle{2}}
  \put(12,6){\circle{2}}
  \put(24,6){\circle*{2}}}
   \put(15,0){\line(0,1){22}}
   \put(15,32){\line(0,1){22}}
   \put(115,0){\line(0,1){22}}
   \put(115,32){\line(0,1){22}}
  \put(1,10){$23|13$}
  \put(1,42){$12|13$}
  \put(119,10){$13|23$}
  \put(119,42){$13|12$}
  \put(21,0){\line(1,0){10}}
  \put(25,-1){x}
  \put(21,6){\line(1,0){10}}
  \put(25,5){x}
  \put(21,16){\line(1,0){10}}
  \put(25,15){x}
  \put(21,38){\line(1,0){10}}
  \put(25,37){x}
  \put(33,22){\line(1,0){10}}
  \put(37,21){x}
  \put(33,32){\line(1,0){10}}
  \put(37,31){x}
  \put(33,48){\line(1,0){10}}
  \put(37,47){x}
  \put(33,54){\line(1,0){10}}
  \put(37,53){x}
  \put(44,1){\line(0,1){4}}
  \put(20,49){\line(0,1){4}}
  \put(21,22){\line(4,-1){22}}
  \put(21,32){\line(4,1){22}}
  \put(87,0){\line(1,0){10}}
  \put(91,-1){x}
  \put(87,6){\line(1,0){10}}
  \put(91,5){x}
  \put(99,16){\line(1,0){10}}
  \put(103,15){x}
  \put(99,38){\line(1,0){10}}
  \put(103,37){x}
  \put(87,22){\line(1,0){10}}
  \put(91,21){x}
  \put(87,32){\line(1,0){10}}
  \put(91,31){x}
  \put(99,48){\line(1,0){10}}
  \put(103,47){x}
  \put(99,54){\line(1,0){10}}
  \put(103,53){x}
  \put(110,1){\line(0,1){4}}
  \put(86,49){\line(0,1){4}}
  \put(87,38){\line(4,-1){22}}
  \put(87,16){\line(4,1){22}}
  \put(65,3){\vector(1,0){12}}
  \put(65,3){\vector(-1,0){12}}
  \put(65,51){\vector(1,0){12}}
  \put(65,51){\vector(-1,0){12}}
  \put(65,27){\vector(2,1){12}}
  \put(65,27){\vector(-2,-1){12}}
  \put(65,27){\vector(2,-1){12}}
  \put(65,27){\vector(-2,1){12}}
  \end{picture}
   \end{center}
Here $m=m'=3$, ~$\Ascr_0$ consists of the pairs $12|13$ and $23|13$, and
$\Bscr_0$ consists of the pairs $13|12$ and $13|23$ (indicated by light
circles); we write $a|b$ for $(a,b)$. The feasible matchings are indicated by
line segments, and the couples involved in the corresponding exchange
operations are marked with crosses.

These 2-patterns give rise to the SQ-relation
  \begin{multline*}
\left(f(Xij|X'i'k')\odot f(Xk|X'j')\right) \oplus \left(f(Xjk|X'i'k')\odot f(Xi|X'j')\right) \\
  =(f(Xik|X'i'j')\odot f(Xj|X'k'))\oplus (f(Xik|X'j'k')\odot f(Xj|X'i')),
  \end{multline*}
where $i<j<k$ and $i'<j'<k'$ (this is rather trivial for minors of a matrix).
\medskip

 \noindent \textbf{7.}
One of the simplest examples of balanced non-homogeneous 2-patterns is formed
by $\Ascr_0=\{1|1\}$ and $\Bscr_0=\{2|1,12|12\}$, in case $m=m'=2$. See the
picture:

 \begin{center}
  \unitlength=1mm
  \begin{picture}(130,25)
  \put(20,0){\circle{2}}
  \put(32,0){\circle*{2}}
  \put(20,6){\circle{2}}
  \put(32,6){\circle*{2}}
  \put(20,16){\circle{2}}
  \put(32,16){\circle*{2}}
  \put(20,22){\circle{2}}
  \put(32,22){\circle*{2}}
  \put(86,0){\circle{2}}
  \put(98,0){\circle{2}}
  \put(86,6){\circle{2}}
  \put(98,6){\circle{2}}
  \put(86,16){\circle*{2}}
  \put(98,16){\circle{2}}
  \put(86,22){\circle{2}}
  \put(98,22){\circle*{2}}
   \put(15,0){\line(0,1){22}}
   \put(105,0){\line(0,1){8}}
   \put(105,16){\line(0,1){8}}
  \put(5,10){$1|1$}
  \put(109,3){$12|12$}
  \put(109,19){$2|1$}
  \put(21,16){\line(1,0){10}}
  \put(25,15){x}
  \put(21,22){\line(1,0){10}}
  \put(20,1){\line(0,1){4}}
  \put(32,1){\line(0,1){4}}
  \put(31.2,2){x}
  \put(87,16){\line(1,0){10}}
  \put(91,15){x}
  \put(87,22){\line(1,0){10}}
  \put(86,1){\line(0,1){4}}
  \put(98,1){\line(0,1){4}}
  \put(97.2,2){x}
  \put(60,3){\vector(1,0){12}}
  \put(60,3){\vector(-1,0){12}}
  \put(60,19){\vector(1,0){12}}
  \put(60,19){\vector(-1,0){12}}
  \end{picture}
   \end{center}

\noindent This gives the following SQ-relation similar to Dodgson's
condensation formula for minors of a matrix~(cf.~\refeq{Dodg_orig}): for $i<k$
and $i'<k'$,
  \begin{multline} \label{eq:dodgson}
  f(Xi|X'i')\odot f(Xk|X'k') \\
=(f(Xik|X'i'k')\odot f(X|X')) \oplus (f(Xk|X'i')\odot f(Xi|X'k')).
   \end{multline}

Another simple non-homogeneous example is analogous to the row decomposition
(by row 2) of the determinant of a $3\times 3$ matrix. Here $m=m'=3$,
~$\Ascr_0=\{13|13\}$ and $\Bscr_0=\{12|13,23|13,123|123\}$; see the picture:

 \begin{center}
  \unitlength=1mm
  \begin{picture}(140,70)
\multiput(20,0)(0,16){5}%
  {\put(0,0){\circle{2}}
  \put(12,0){\circle*{2}}
  \put(24,0){\circle{2}}
  \put(0,6){\circle{2}}
  \put(12,6){\circle*{2}}
  \put(24,6){\circle{2}}}
\multiput(86,0)(0,16){2}%
  {\put(0,0){\circle*{2}}
  \put(12,0){\circle{2}}
  \put(24,0){\circle{2}}
  \put(0,6){\circle{2}}
  \put(12,6){\circle*{2}}
  \put(24,6){\circle{2}}}
\multiput(86,32)(0,16){2}%
  {\put(0,0){\circle{2}}
  \put(12,0){\circle{2}}
  \put(24,0){\circle*{2}}
  \put(0,6){\circle{2}}
  \put(12,6){\circle*{2}}
  \put(24,6){\circle{2}}}
  \put(86,64){\circle{2}}
  \put(98,64){\circle{2}}
  \put(110,64){\circle{2}}
  \put(86,70){\circle{2}}
  \put(98,70){\circle{2}}
  \put(110,70){\circle{2}}
   \put(15,0){\line(0,1){70}}
   \put(115,0){\line(0,1){22}}
   \put(115,32){\line(0,1){22}}
   \put(115,63){\line(0,1){8}}
  \put(1,33){$13|13$}
  \put(119,10){$23|13$}
  \put(119,42){$12|13$}
  \put(119,66){$123|123$}
\multiput(20,0)(66,0){2}%
  {\put(1,0){\line(1,0){10}}
  \put(5,-1){x}
  \put(1,16){\line(1,0){10}}
  \put(5,15){x}
  \put(13,32){\line(1,0){10}}
  \put(17,31){x}
  \put(13,48){\line(1,0){10}}
  \put(17,47){x}
  \put(12,65){\line(0,1){4}}
  \put(11.2,66){x}
  \put(13,6){\line(1,0){10}}
  \put(1,22){\line(1,0){10}}
  \put(1,38){\line(1,0){10}}
  \put(13,54){\line(1,0){10}}
  \put(24,17){\line(0,1){4}}
  \put(0,49){\line(0,1){4}}
  \put(0,65){\line(0,1){4}}
  \put(24,65){\line(0,1){4}}
  \put(1,6){\line(4,-1){22}}
  \put(1,32){\line(4,1){22}}}
  \put(65,3){\vector(1,0){12}}
  \put(65,3){\vector(-1,0){12}}
  \put(65,19){\vector(1,0){12}}
  \put(65,19){\vector(-1,0){12}}
  \put(65,35){\vector(1,0){12}}
  \put(65,35){\vector(-1,0){12}}
  \put(65,51){\vector(1,0){12}}
  \put(65,51){\vector(-1,0){12}}
  \put(65,67){\vector(1,0){12}}
  \put(65,67){\vector(-1,0){12}}
  \end{picture}
   \end{center}

%----------------------- Sec.5

\section{\Large Necessity of the balancedness}  \label{sec:(i)-(ii)}

This section is devoted to the other direction in Theorem~\ref{tm:main}.
Moreover, we show a sharper property. It says that if a pair $\Ascr_0,\Bscr_0$
is not balanced, then for \emph{any} choice of appropriate consistent sets
$X,Y,X',Y'$ and for $\mathfrak{S}:=\Zset$, there exist (and can be explicitly
constructed) a planar network and a weighting such that the corresponding
flow-generated function $f$ violates relation~\refeq{gen_sq}.

As before, for subsets $C\subseteq Y$ and $C'\subseteq Y'$, we write $\bar C$
for $Y-C$, and $\bar C\,'$ for $Y'-C'$, and call $(C,C')$ a \emph{proper} pair
for $(Y,Y')$ if it satisfies~\refeq{proper}(ii).

  \begin{theorem} \label{tm:nonbalance}
Fix disjoint sets $X,Y\subseteq [n]$ and disjoint sets $X',Y'\subseteq [n']$
satisfying~\refeq{proper}(i). Let $\Ascr,\Bscr\Subset\Pi_{Y,Y'}$. Suppose that
$\Ascr,\Bscr$ are not balanced. Then~\refeq{gen_sq} does not hold for some
$(G,w)$ and $\mathfrak{S}=\Zset$. More precisely, there exists a planar network
$G=(V,E)$ with $n$ sources and $n'$ sinks such that for the all-unit weighting
$w\equiv 1$ on $V$, the flow-generated function $f=f_w$ on $\Escr^{n,n'}$ gives
  \begin{equation} \label{eq:non-balan}
  \sum\nolimits_{(A,A')\in\Ascr} f(XA|X'A')\,f(X\bar A|X'\bar A\,')
  \ne  \sum\nolimits_{(B,B')\in\Bscr} f(XB|X'B')\,f(X\bar B|X\bar B\,').
  \end{equation}
  \end{theorem}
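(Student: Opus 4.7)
The plan is to recast both sides of \refeq{non-balan} as counts of double flows grouped by their associated matchings, and then to construct a planar acyclic network that realises a single ``bad'' matching.

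\textbf{Step 1 (reduction to matching counts).} Because $w\equiv 1$, the product $f(XA|X'A')\cdot f(X\bar A|X'\bar A\,')$ is just $|\Dscr(A,A')|$. By Lemma~\ref{lm:sumFFp}, each double flow $(\phi,\phi')\in\Dscr(A,A')$ determines a planar perfect matching $M(\phi,\phi')\in\Mscr(A,A')$, and Lemma~\ref{lm:path_switch} shows that two double flows related by exchanging along a subset of their common matching share the same edge multiset $E_\phi\sqcup E_{\phi'}$. Define
$$
\Lscr(M):=\{(A,A')\in\Pi_{Y,Y'}\,\colon M\in\Mscr(A,A')\},\qquad
c_G(M):=\#\{\text{edge multisets of double flows in }G\text{ with matching }M\}.
$$
Each multiset class contributes exactly one double flow per pair in $\Lscr(M)$ (apply Lemma~\ref{lm:path_switch} to every $M_0\subseteq M$), hence
$$
\sum_{(A,A')\in\Ascr} f(XA|X'A')f(X\bar A|X'\bar A\,')=\sum_M c_G(M)\cdot|\Ascr\cap\Lscr(M)|,
$$
and subtracting the analogous expression for $\Bscr$ yields
$$
\mathrm{LHS}-\mathrm{RHS}=\sum_M c_G(M)\,\delta(M),\qquad\delta(M):=|\Ascr\cap\Lscr(M)|-|\Bscr\cap\Lscr(M)|.
$$
Non-balancedness supplies some planar matching $M^*$ on $Y\sqcup Y'$ with $\delta(M^*)\ne 0$.

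\textbf{Step 2 (construct a network isolating $M^*$).} It suffices to exhibit a planar acyclic network $G^*$ on the prescribed terminals with $c_{G^*}(M^*)=1$ and $c_{G^*}(M)=0$ for every $M\ne M^*$; then $\mathrm{LHS}-\mathrm{RHS}=\delta(M^*)\ne 0$. Build $G^*$ by installing one gadget per couple of $M^*$: a single directed path $s_i\to t_{j'}$ for each vertical couple $\{i,j'\}\in\Mver^*$; a Y-gadget (disjoint input arms $s_i\to v_\pi$, $s_j\to v_\pi$ followed by a single outgoing trunk) for each lower horizontal couple $\pi=\{i,j\}\in\Mlh^*$; and a dually-oriented inverted Y-gadget for each upper horizontal couple in $\Muh^*$. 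The free ends of the gadgets together with the terminals in $X\sqcup X'$ are paired across the disc into a planar family of directed paths: $X$-to-$X'$ pass-throughs, $X$-to-$\Muh^*$-inputs, $\Mlh^*$-trunks-to-$X'$, and $\Mlh^*$-trunks-to-$\Muh^*$-inputs. The identity $|\Mlh^*|-|\Muh^*|=|X'|-|X|$, forced by \refeq{proper}(i) together with the colour constraints \refeq{match1} on $M^*$, ensures that the numbers of free ends on each side balance, so such a pairing exists; non-crossingness of $M^*$ guarantees a planar layout.

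\textbf{Step 3 (verify uniqueness).} In $G^*$ every source and sink has degree $1$, and every internal vertex is either a single-edge-in/out waypoint or a gadget branching vertex $v_\pi$ with $(\deltain,\deltaout)\in\{(2,1),(1,2)\}$. Consequently, for each $(A,A')\in\Lscr(M^*)$ the colouring of couples uniquely forces the double flow $(\phi,\phi')$: at each $v_\pi$ for $\pi\in\Mlh^*$, the ``white'' endpoint of $\pi$ contributes the path through the trunk in $\phi$ and the ``black'' endpoint does the same in $\phi'$; dually at $\Muh^*$-vertices. All these $2^{|M^*|}$ double flows share the same edge multiset, so $c_{G^*}(M^*)=1$. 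For $(A,A')\notin\Lscr(M^*)$ the colouring violates \refeq{match1} at some gadget, making a flow impossible (e.g.\ if both ends of a lower horizontal couple were white, $\phi$ would need two vertex-disjoint paths through a single trunk). So $c_{G^*}(M)=0$ for $M\ne M^*$, yielding \refeq{non-balan}.

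\textbf{Main obstacle.} The technical heart is Step~2, especially in the ``overloaded'' regime where $|\Muh^*|>|X|$ (equivalently $|\Mlh^*|>|X'|$): here some $\Mlh^*$-trunks must be routed across the disc directly into $\Muh^*$-inputs, bypassing the $X\sqcup X'$ layer. One must show, by induction on the nesting depth of $M^*$, that such chaining can always be carried out planarly and acyclically while preserving uniqueness of the associated double flow. This is precisely what makes the ``necessity'' half of Theorem~\ref{tm:main} substantially more delicate than the sufficiency proved in Section~\ref{sec:balan}.
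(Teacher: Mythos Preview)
Your overall strategy---fix a matching $M^*$ witnessing the imbalance and build a network whose only double flows realise $M^*$---is exactly the paper's, and Step~1 is essentially correct (though $c_G(M)$ should be the common number of double flows with matching $M$ over any fixed pair in $\Lscr(M)$, not the number of edge multisets; these differ by a factor $2^d$ when $E_\phi\triangle E_{\phi'}$ contains $d$ circuits, but that factor cancels between $\Ascr$ and $\Bscr$ anyway).

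The genuine gap is Step~2, and it shows up already in the simplest nested case, not only in your ``overloaded regime''. Take $M^*$ with lower horizontal couples $14$ and $23$ and nothing else. The Y-gadget for $23$ lies in the region enclosed by the two arms of the $14$-gadget and the boundary arc through $s_1,\ldots,s_4$; its single trunk cannot reach the sink side without crossing one of those arms. Routing the trunk into $v_{14}$ is no better: every $(C,C')\in\Lscr(M^*)$ has exactly one element of $\{1,4\}$ and one of $\{2,3\}$ in $C$, so the $(XC|X'C')$-flow would require two vertex-disjoint paths through $v_{14}$. The single-trunk Y primitive is therefore the wrong building block as soon as horizontal couples nest, and your closing paragraph acknowledges but does not close this gap. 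The paper's construction is considerably more elaborate: each horizontal couple $\pi=ij$ is replaced by an \emph{alternating} path on $j-i+2$ vertices; the even vertices of each immediate successor are linked by ``bridges'' to the nonterminal odd vertices of $\pi$; and the even vertices of the maximal couples are connected across the disc by ``middle bridges'' (with a local splitting surgery wherever these meet vertical segments). A non-minimal $\pi$ thereby acquires $|\{\pi\}\cup\Succ(\pi)|$ outgoing ``trunks'', precisely what is needed to thread all its descendants through, and properties (P1)--(P2) are then verified by induction along a compatible linear order on the couples. The paper also first reduces to $X=X'=\emptyset$ by replacing each element of $X\cup X'$ with a fresh minimal horizontal couple, which disposes of the cross-disc bookkeeping you were worried about.
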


  \begin{proof}
~Since $\Ascr,\Bscr$ are not balanced, there exists a planar perfect matching
$M$ on $Y\sqcup Y'$ such that
   \begin{equation} \label{eq:diff_M}
   |\Ascr_M|\ne |\Bscr_M|,
   \end{equation}
where $\Ascr_M$ denotes the set of pairs $(A,A')\in\Ascr$ having $M$ as a
feasible matching: $M\in\Mscr(A,A')$, and similarly for $\Bscr$.

We fix such an $M$, and our aim is to construct a planar network $G=(V,E)$ that
satisfies the following properties: for each proper pair $(C,C')$ for $(Y,Y')$,
  \begin{itemize}
\item[(P1)] If $M\in\Mscr(C,C')$, then ~$G$ has a
unique $(XC|X'C')$-flow and a unique $(X\bar C|X'\bar C\,')$-flow, i.e.,
$|\Phi_{XC|X'C'}|=|\Phi_{X\bar C|X'\bar C\,'}|=1$;
\item[(P2)] If $M\notin\Mscr(C,C')$, then at least one
of $\Phi_{XC|X'C'}$ and $\Phi_{X\bar C|X'\bar C\,'}$ is empty.
  \end{itemize}

Assuming that such a $G$ does exist, assign the weight $w(v):=1$ to each vertex
$v$. By~(P1) and~(P2), for the function $f=f_w$ and a proper pair
$(C,C')\subseteq (Y,Y')$, each of the values $f(XC|X'C')$ and $f(X\bar C|X'\bar
C\,')$ is equal to one if $M\in \Mscr(C,C')$, and at least one of them is zero
otherwise. This implies
 $$
  \sum_{(A,A')\in\Ascr} f(XA|X'A')\,f(X\bar A|X'\bar A\,')=|\Ascr_M|,\;\;
  \sum_{(B,B')\in\Bscr} f(XB|X'B')\,f(X\bar B|X'\bar B\,')=|\Bscr_M|,
 $$
and now the required inequality~\refeq{non-balan} follows from~\refeq{diff_M}.

It suffices to construct the desired network $G$ in case $n=|X|+|Y|$ and
$n'=|X'|+|Y'|$ (for we can add a source $s_i$ for $i\in[n]-(X\cup Y)$ (if
exists) as an isolated vertex, and can do similarly for sinks). So we may
assume, w.l.o.g., that $X,Y$ form a partition of $[n]$, and $X',Y'$ do that of
$[n']$.

We first describe the construction when $X=X'=\emptyset$, which is the crucial
special case. Subsequently we will explain that this construction can be easily
extended to arbitrary $X,X'$.

Thus, we deal with $n=|Y|=|Y'|$ sources $s_1,\ldots,s_n$ and $n$ sinks
$t_1,\ldots,t_n$. As usual, the sources (sinks) lie in the lower (resp., upper)
half of a circumference $O$ in the plane, and their indices grow from left to
right. The other vertices of $G$ lie inside $O$. All edges will be represented
by directed straight-line segments. The graph $G$ is constructed in five steps.
\smallskip

\emph{Step 1.} ~For each couple $\pi\in M$, we draw the segment between
corresponding terminals, denoted by $L_\pi$. Namely: (a) if $\pi=ij\in\Mlh$,
~$L_\pi$ connects the sources $s_i,s_j$ (a \emph{lower horizontal} segment);
~(b) if $\pi=ij\in\Muh$, ~$L_\pi$ connects the sinks $t_i,t_j$ (an \emph{upper
horizontal} segment); ~and (c) if $\pi=ij\in\Mver$, ~$L_\pi$ connects the
source $s_i$ and sink $t_j$ (a \emph{vertical} segment). In case~(c), we direct
$L_\pi$ from $s_i$ to $t_j$. These segments are pairwise disjoint (since $M$ is
planar).
\smallskip

\emph{Step 2.} ~For each $\pi=ij\in\Mlh$, the lower horizontal segment $L_\pi$
is transformed into a graph whose vertices are $s_i,s_j$ and $j-i$ distinct
points in the interior of the segment. The edges are the $j-i+1$ subsegments
connecting consecutive vertices. We distinguish between two sorts of vertices,
called \emph{odd} and \emph{even} ones, so that $s_i,s_j$ are regarded as odd,
and the odd and even vertices alternate along $L_\pi$. Each edge is directed
from the odd to even vertex. So $L_\pi$ becomes a path with alternating edge
directions, and its end vertices $s_i,s_j$ have leaving edges.

Each upper horizontal segment $L_{\pi=ij}$ is transformed into a path in a
similar fashion, but now we direct each edge from the even to odd vertex. So
the end vertices $t_i,t_j$ of $L_\pi$ are odd and have entering edges.
\smallskip

\emph{Step 3.} ~The horizontal segments (regarded as graphs) are connected by
additional edges. To define them, let us say that a couple $\pi=ij\in\Mlh$ is a
\emph{predecessor} of another couple $\pi'\in\Mlh$ if $[\pi]\supset[\pi']$. If,
in addition, there is no $\pi''\in\Mlh$ between $\pi$ and $\pi'$ (i.e.,
$[\pi]\supset[\pi'']\supset[\pi']$), $\pi$ is called the \emph{immediate
predecessor} of $\pi'$. Accordingly, $\pi'$ is called a \emph{successor} of
$\pi$ in the former case, and an \emph{immediate successor} in the latter case.
A couple is \emph{maximal} (\emph{minimal}) if it has no predecessor (resp., no
successor). The set of successors (immediate successors) of $\pi$ is denoted by
$\Succ(\pi)$ (resp., by $\ISucc(\pi)$); moreover, we order the couples in
$\ISucc(\pi)$, say, $\pi_1=i_1j_1,\ldots,\pi_r=i_rj_r$, so that $j_d<i_{d+1}$
for $d=1,\ldots,r-1$. It is easy to see that $i_1=i+1$, $j_r=j-1$ and
$i_{d+1}=j_d+1$ for each $d$ (when $r\ge 1$).

Note that each path $L_{\pi_d}$ has exactly $(j_d-i_d+1)/2$ ~even vertices.
Also $L_\pi$ has exactly $(j-i-1)/2$ nonterminal odd vertices $v$ (i.e., $v\ne
s_i,s_j$). Then
   $$
   \frac12\sum\nolimits_{d=1}^{r}(j_d-i_d+1)=\frac12(j_r-i_1+1)=\frac12(j-i-1),
   $$
yielding the equality
    $$
    |\Vodd(\pi)|=\sum(|\Veven(\pi')|\,\colon \pi'\in\ISucc(\pi)),
    $$
where $\Vodd(\pi'')$ ~($\Veven(\pi'')$) denotes the set of \emph{nonterminal}
odd vertices (resp., of even vertices) in a path $L_{\pi''}$. Observe that
within the circle $O^*$ surrounded by $O$, the region confined by the segments
for $\{\pi\}\cup\ISucc(\pi)$ is convex and does not meet any other segment for
$M$. Ordering the vertices in each of the two equally sized sets
$W(\pi):=\cup(\Veven(\pi')\,\colon \pi'\in\ISucc(\pi))$ and $\Vodd(\pi)$ from
left to right, we draw a directed edge (segment) from each vertex of the former
to the corresponding vertex of the latter. These edges are pairwise disjoint;
we call them \emph{lower bridges} for $\pi$. See the picture where $\pi=16$ and
the dark and light circles indicate even and nonterminal odd vertices in
$L_\pi$, respectively.

 \begin{center}
  \unitlength=1mm
  \begin{picture}(145,17)(0,2)
 %                          % left
 \put(0,10){\circle{2.5}}
 \put(0,10){\circle*{1}}
 \put(49,10){\circle{2.5}}
 \put(49,10){\circle*{1}}
 \put(-2,6){$s_1$}
 \put(50,7){$s_6$}
 \put(0,10){\line(1,0){49}}
 \put(65,9){$\Longrightarrow$}

%%                           right
 \put(85,10){\circle{2.5}}
 \put(85,10){\circle*{1}}
 \put(139,10){\circle{2.5}}
 \put(139,10){\circle*{1}}
 \put(91,10){\circle*{2}}
 \put(112,10){\circle*{2}}
 \put(132,10){\circle*{2}}
 \put(99.5,10){\circle{2}}
 \put(124.5,10){\circle{2}}
 \put(85,10){\vector(1,0){5}}
 \put(98.5,10){\vector(-1,0){6.5}}
 \put(100.5,10){\vector(1,0){11}}
 \put(123.5,10){\vector(-1,0){10.5}}
 \put(125.5,10){\vector(1,0){6}}
 \put(139,10){\vector(-1,0){6}}
{\thicklines
 \put(99.4,2){\vector(0,1){7}}
 \put(124.4,2){\vector(0,1){7}}
 \put(99.6,2){\vector(0,1){7}}
 \put(124.6,2){\vector(0,1){7}}
 \put(90.8,10){\vector(0,1){7}}
 \put(131.8,10){\vector(0,1){7}}
 \put(111.8,10){\vector(0,1){7}}
 \put(91.0,10){\vector(0,1){7}}
 \put(132.0,10){\vector(0,1){7}}
 \put(112.0,10){\vector(0,1){7}}
}
  \end{picture}
   \end{center}

A similar edge set is constructed for each non-minimal upper couple
$\pi\in\Muh$, connecting the set $\Vodd(\pi)$ of nonterminal odd vertices in
$L_\pi$ and the set $W(\pi)$ of even vertices in $L_{\pi'}$ among $\pi'\in
\ISucc(\pi)$. The only difference is that such edges, called \emph{upper
bridges} for $\pi$, are now directed from odd  to even vertices.

The following observation is useful:
  \begin{numitem}
for each $\pi\in\Mlh$, ~$|\Veven(\pi)|=|\{\pi\}\cup\Succ(\pi)|$, and similarly
for $\pi\in\Muh$.
  \label{eq:Vqpi}
  \end{numitem}
(This follows from the equality $|\Veven(\pi)|=(j-i+1)/2$, where $\pi=ij$, and
the fact that the successors of $\pi$ form a perfect matching on
$\{i+1,\ldots,j-1\}$, implying $|\Succ(\pi)|=(j-i-1)/2$.)
\smallskip

\emph{Step 4.} ~Let $\Mlhmax$ ($\Muhmax$) be the set of maximal couples in
$\Mlh$ (resp., $\Muh$). The segments of couples in $\Mlhmax\cup\Muhmax$ confine
a convex region $\Omega$ within the circle $O^*$. Consider the sets
$Q:=\cup(\Veven(\pi)\,\colon \pi\in \Mlhmax)$ and $Q':=\cup(\Veven(\pi)\,\colon
\pi\in \Muhmax)$; we order the vertices in each of them from left to right.
Property~\refeq{Vqpi} and the equalities $|Y|=|Y'|=2|M|$ lead to the following
relations:
   \begin{equation} \label{eq:QQp}
|Q|=|\Mlh|=\frac12(|Y|-|\Mver|)=\frac12(|Y'|-|\Mver|)=|\Muh|=|Q'|.
   \end{equation}

So $|Q|=|Q'|$. We draw a directed edge from each vertex of the sequence $Q$ to
the corresponding vertex of $Q'$. These (pairwise non-crossing) edges are
called \emph{middle bridges}.
\smallskip

\emph{Step 5.} ~When $\Mver\ne \emptyset$, the graph $G'$ constructed during
the previous steps need not be planar since some middle bridges may intersect
vertical segments. The final step transforms $G'$ within small neighborhoods of
such intersection points.

More precisely, for $\pi=ij\in\Mver$, the (directed) vertical segment $L_\pi$
goes from the source $s_i$ to the sink $t_j$ and lies in the convex region
$\Omega$ (defined above). The set $B_\pi$ of edges of $G'$ intersecting $L_\pi$
consists of some middle bridges. (One can see that $i-j$ is even and that
$B_\pi=\emptyset$ if $i=j$.) Let $z_{\pi,b}$ denote the intersection point of
$L_\pi$ and $b\in B_\pi$. Also for a middle bridge $b$, we denote by $R_b$ the
set of vertical segments intersecting $b$.

To transform $G'$ into the desired graph $G$, we first turn each vertical
segment $L_{\pi=ij}$ into the directed path (going from $s_i$ to $t_j$) whose
inner vertices are the points $z_{\pi,b}$ for $b\in B_\pi$, and similarly turn
each middle bridge $b$ (directed ``upwards'') into the directed path whose
inner vertices are the points $z_{\pi,b}$ for $L_\pi\in R_b$. Next we
iteratively modify the graph as follows. At each iteration, choose a vertex
$z=z_{\pi,b}$ in the current graph, split $z$ into two vertices $z'$ and $z''$,
and connect them by edge $e_{\pi,b}$ from $z'$ to $z''$, called the \emph{extra
edge} generated by $\pi,b$.

Geometrically, we choose $z',z''$ to be two points in the segment $b$ within a
small neighborhood of $z$ so that $z'$ lies below $z''$. Then $b$ (regarded as
path) is modified in a natural way: if $b$ is of the form $\ldots,e,z,\tilde
e,\ldots$ (where $e$ and $\tilde e$ are the edges in $b$ entering and leaving
$z$, respectively), then we make $e$ enter $z'$ and make $\tilde e$ leave
$z''$; this turns $b$ into the directed path $\ldots,e,z',e_{\pi,b}, z'',\tilde
e,\ldots$ . The local transformation of the path $L:=L_\pi$ at $z$ is
different: if $L$ is of the form $\ldots,e,z,\tilde e,\ldots$, we make $e$
entering $z''$, and $\tilde e$ leaving $z'$, obtaining the non-directed path
$\ldots,e,z'',e_{\pi,b},z',\tilde e,\ldots$ (in which $e_{\pi,b}$ has the
backward direction). Geometrically, $L$ turns into a zigzag-shaped line. The
transformation at $z=z_{\pi,b}$ is illustrated in the picture:
 \begin{center}
  \unitlength=1mm
  \begin{picture}(120,38)
 %                          % left
 \put(20,20){\circle*{2}}
 \put(20,20){\vector(-2,1){20}}
 \put(40,10){\vector(-2,1){19}}
 \put(-2,25){$L$}
 \put(22.5,37){$b$}
 \put(22,21){$z$}
{\thicklines
 \put(19.9,0){\vector(0,1){19}}
 \put(19.9,20){\vector(0,1){20}}
 \put(20.1,0){\vector(0,1){19}}
 \put(20.1,20){\vector(0,1){20}}
}
 \put(50,19){turns into}
 %                          % right
 \put(100,15){\circle*{2}}
 \put(100,25){\circle*{2}}
 \put(100,15){\vector(-4,3){20}}
 \put(120,10){\vector(-4,3){19}}
 \put(100,15){\vector(0,1){9}}
 \put(78,25){$L$}
 \put(118,14){$L$}
 \put(102.5,37){$b$}
 \put(102,26){$z''$}
 \put(102,13){$z'$}
{\thicklines
 \put(99.9,0){\vector(0,1){14}}
 \put(99.9,25){\vector(0,1){15}}
 \put(100.1,0){\vector(0,1){14}}
 \put(100.1,25){\vector(0,1){15}}
}
  \end{picture}
   \end{center}

Eventually we obtain the desired graph $G$. We refer to the edges of $G$
generated by vertical segments (resp., middle bridges) of $G'$ and different
from extra edges as \emph{v-edges} (resp., \emph{b-edges}). Thus, under the
transformation $G'\mapsto G$, each middle bridge $b$ turns into a directed path
with $|R_b|+1$ ~b-edges and $|R_b|$ extra edges which alternate. In its turn,
each vertical segment $L=L_{\pi}$ turns into a ``zigzag'' path with $|B_\pi|+1$
~v-edges and $|B_\pi|$ extra edges; these edges alternate and are,
respectively, the forward and backward edges in the path.

We will distinguish between two sorts of edges in $G$, referring to the lower
and upper bridges and b-edges as \emph{thick} edges, and to the remaining edges
as \emph{thin} ones. The picture below illustrates the construction for an
instance of $M$. Here $n=7$, $\Mlh=\{16,23,45\}$, $\Muh=\{12,47,56\}$ and
$\Mver=\{73\}$, and the left fragment shows the segment representation of $M$
after Step~1. The graph $G$ is drawn in the right fragment where the dark
circles indicate even vertices and those formed by splitting, the light circles
indicate nonterminal odd vertices, and the thin and thick edges are as defined
above.
 \begin{center}
  \unitlength=1mm
  \begin{picture}(140,65)
 %                          % left
 \put(0,17){\circle{2.5}}
 \put(0,17){\circle*{1}}
 \put(7,10){\circle{2.5}}
 \put(7,10){\circle*{1}}
 \put(22,5){\circle{2.5}}
 \put(22,5){\circle*{1}}
 \put(32,5){\circle{2.5}}
 \put(32,5){\circle*{1}}
 \put(47,10){\circle{2.5}}
 \put(47,10){\circle*{1}}
 \put(54,17){\circle{2.5}}
 \put(54,17){\circle*{1}}
 \put(57,23){\circle{2.5}}
 \put(57,23){\circle*{1}}
 \put(0,45){\circle{2.5}}
 \put(0,45){\circle*{1}}
 \put(10,55){\circle{2.5}}
 \put(10,55){\circle*{1}}
 \put(20,60){\circle{2.5}}
 \put(20,60){\circle*{1}}
 \put(32,60){\circle{2.5}}
 \put(32,60){\circle*{1}}
 \put(44,56){\circle{2.5}}
 \put(44,56){\circle*{1}}
 \put(54,46){\circle{2.5}}
 \put(54,46){\circle*{1}}
 \put(58,34){\circle{2.5}}
 \put(58,34){\circle*{1}}
 \put(-2,13){$s_1$}
 \put(4,7){$s_2$}
 \put(19,1){$s_3$}
 \put(33,1){$s_4$}
 \put(49,8){$s_5$}
 \put(55,14){$s_6$}
 \put(58,20){$s_7$}
 \put(-2,47.5){$t_1$}
 \put(6,57){$t_2$}
 \put(16,62){$t_3$}
 \put(33,62){$t_4$}
 \put(45,58){$t_5$}
 \put(55,48){$t_6$}
 \put(59,36){$t_7$}
 \put(0,17){\line(1,0){54}}
 \put(7,10){\line(3,-1){15}}
 \put(32,5){\line(3,1){15}}
 \put(0,45){\line(1,1){10}}
 \put(32,60){\line(1,-1){26}}
 \put(44,56){\line(1,-1){10}}
 \put(57,23){\vector(-1,1){36}}
 \multiput(-5,28)(8,0){9}%
 {\line(1,0){4}
 }
  %                           % right
 \put(80,17){\circle{2.5}}
 \put(80,17){\circle*{1}}
 \put(134,17){\circle{2.5}}
 \put(134,17){\circle*{1}}
 \put(86,17){\circle*{2}}
 \put(107,17){\circle*{2}}
 \put(127,17){\circle*{2}}
 \put(94.5,17){\circle{2}}
 \put(119.5,17){\circle{2}}
 \put(80,17){\vector(1,0){5}}
 \put(93.5,17){\vector(-1,0){6.5}}
 \put(95.5,17){\vector(1,0){11}}
 \put(118.5,17){\vector(-1,0){10.5}}
 \put(120.5,17){\vector(1,0){6}}
 \put(134,17){\vector(-1,0){6}}
 \put(87,10){\circle{2.5}}
 \put(87,10){\circle*{1}}
 \put(102,5){\circle{2.5}}
 \put(102,5){\circle*{1}}
 \put(94.5,7.5){\circle*{2}}
 \put(87,10){\vector(3,-1){6.5}}
 \put(102,5){\vector(-3,1){6.5}}
 \put(112,5){\circle{2.5}}
 \put(112,5){\circle*{1}}
 \put(127,10){\circle{2.5}}
 \put(127,10){\circle*{1}}
 \put(119.5,7.5){\circle*{2}}
 \put(112,5){\vector(3,1){6.5}}
 \put(127,10){\vector(-3,-1){6.5}}
 \put(80,45){\circle{2.5}}
 \put(80,45){\circle*{1}}
 \put(90,55){\circle{2.5}}
 \put(90,55){\circle*{1}}
 \put(86,51){\circle*{2}}
 \put(86,51){\vector(-1,-1){5}}
 \put(85,50){\vector(1,1){4}}
 \put(124,56){\circle{2.5}}
 \put(124,56){\circle*{1}}
 \put(134,46){\circle{2.5}}
 \put(134,46){\circle*{1}}
 \put(129,51){\vector(-1,1){4}}
 \put(128,52){\vector(1,-1){5}}
 \put(128.5,51.5){\circle*{2}}
 \put(112,60){\circle{2.5}}
 \put(112,60){\circle*{1}}
 \put(138,34){\circle{2.5}}
 \put(138,34){\circle*{1}}
 \put(119.2,52.8){\circle*{2}}
 \put(125.5,46.5){\circle{2}}
 \put(132.5,39.5){\circle*{2}}
 \put(119.2,52.8){\vector(-1,1){6}}
 \put(119.2,52.8){\vector(1,-1){5.5}}
 \put(132.5,39.5){\vector(-1,1){6}}
 \put(132,40){\vector(1,-1){5}}
 \put(137,23){\circle{2.5}}
 \put(137,23){\circle*{1}}
 \put(100,60){\circle{2.5}}
 \put(100,60){\circle*{1}}
 \put(114,38){\circle*{2}}
 \put(116.2,45){\circle*{2}}
 \put(129,25){\circle*{2}}
 \put(130.7,32){\circle*{2}}
 \put(137,23){\vector(-2,3){5.8}}
 \put(129,25){\vector(-2,3){12.5}}
 \put(114,38){\vector(-2,3){14}}
 \put(129,25){\vector(1,4){1.5}}
 \put(114,38){\vector(1,3){2}}
{\thicklines
 \put(85.9,17){\vector(0,1){33.2}}
 \put(94.4,7.5){\vector(0,1){8.5}}
 \put(119.4,7.5){\vector(0,1){8.5}}
 \put(126.8,17){\vector(1,4){1.8}}
 \put(106.8,17){\vector(1,3){6.8}}
 \put(130.5,32){\vector(1,4){1.8}}
 \put(116.2,45){\vector(1,3){2.3}}
 \put(125.8,47){\vector(2,3){2.5}}
 \put(86.1,17){\vector(0,1){33.2}}
 \put(94.6,7.5){\vector(0,1){8.5}}
 \put(119.6,7.5){\vector(0,1){8.5}}
 \put(127.1,17){\vector(1,4){1.8}}
 \put(107.1,17){\vector(1,3){6.8}}
 \put(130.8,32){\vector(1,4){1.8}}
 \put(116.5,45){\vector(1,3){2.3}}
 \put(126.1,47){\vector(2,3){2.5}}
}
 \put(78,13){$s_1$}
 \put(84,7){$s_2$}
 \put(99,1){$s_3$}
 \put(113,1){$s_4$}
 \put(129,8){$s_5$}
 \put(135,14){$s_6$}
 \put(138,20){$s_7$}
 \put(78,47.5){$t_1$}
 \put(86,57){$t_2$}
 \put(96,62){$t_3$}
 \put(113,62){$t_4$}
 \put(125,58){$t_5$}
 \put(135,48){$t_6$}
 \put(139,36){$t_7$}
  \end{picture}
   \end{center}

Note that the obtained $G$ is acyclic (as all edges not contained in
``horizontal segments'' are ``directed upwards''). Also we will take advantages
from the following features of $G$ which can be seen from the above
construction:
  \begin{numitem1}
  \begin{itemize}
\item[(i)] Each source has one leaving edge and no entering edge, whereas each
sink has one entering edge and no leaving edge;
\item[(ii)] Each inner (i.e., nonterminal) vertex is of degree 3, and it has
either two thin entering edges and one thick leaving edge, or two thin leaving
edges and one thick entering edge;
\item[(iii)] The connected components of the subgraph of $G$ induced by the
thin edges correspond to the lower horizontal paths $L_\pi$ for $\pi\in\Mlh$,
the upper horizontal paths $L_\pi$ for $\pi\in\Muh$, and the (straight or
zigzag) paths $L_\pi$ for $\pi\in\Mver$, each of these paths having alternately
directed edges.
  \end{itemize}
  \label{eq:propG}
  \end{numitem1}
It will be convenient to represent each thin path $L_\pi$ as the union of two
matchings (one being formed by the forward edges, and the other by the backward
edges), denoted by $N^1_\pi, N^2_\pi$. Also we denote the set of thick edges
entering (leaving) vertices of $L_\pi$ by $\Zin_\pi$ (resp., $\Zout_\pi$). In
particular, $\Zin_\pi$ is the set of lower bridges for $\pi$ when $\pi\in
\Mlh$, and $\Zout_\pi$ is the set of upper bridges for $\pi$ when $\pi\in
\Muh$.

We assert that $G$ satisfies properties~(P1) and~(P2) for the given $M$. To
show this, we consider a proper pair $(C,C')$ for $(Y,Y')$ and argue as
follows. \smallskip

(a) Suppose $M\in\Mscr(C,C')$. Take the subgraph $F$ of $G$ induced by the edge
set $U$ consisting of all thick edges and the following thin edges. For each
$\pi\in \Mlh$, ~$U$ includes exactly one of the matchings $N^1_\pi,N^2_\pi$ in
$L_\pi$, namely, the one containing the edge leaving the source $s_i$, where
$i$ is the element of $\pi\cap C$ (which is unique since $M$ is feasible for
$(C,C')$). Similarly, for each $\pi\in\Muh$, ~$U$ includes the matching in
$L_\pi$ that contains the edge entering the sink $t_j$, where $\{j\}=\pi\cap
C'$. And for each $\pi=ij=\Mver$, if $i\in C$ (and therefore, $j\in C'$), then
$U$ includes the matching in $L_\pi$ covering both $s_i$ and $t_j$ (which is
formed by v-edges), whereas if $i\notin C$ (and $j\notin C'$), then $U$
includes the matching formed by extra edges (which may be empty).

Using~\refeq{propG} and the fact that $G$ is acyclic, one can conclude that $F$
consists of pairwise disjoint directed paths going from $S_C$ to $T_{C'}$,
i.e., $F$ is a $(C|C')$-flow in $G$. Acting similarly w.r.t. $\bar C$ and $\bar
C\,'$, we construct a $(\bar C|\bar C\,')$-flow $F'$ in $G$. \smallskip

(b) Next we show that in case $M\in\Mscr(C,C')$ the flows $F$ and $F'$ as above
are unique. Consider an arbitrary flow $\tilde F$ from some sources to some
sinks in $G$. From~\refeq{propG} it easily follows that for each $\pi\in M$,
   \begin{numitem1}
 \begin{itemize}
\item[(i)] If $\tilde F\cap L_\pi$ is a matching $N^\alpha_\pi$, $\alpha\in\{1,2\}$,
then $\tilde F$ contains both $\Zin_\pi,\Zout_\pi$;
\item[(ii)] Conversely, if $\tilde F$ contains a set $Z\in\{\Zin_\pi,\Zout_\pi\}$
and if $Z\ne\emptyset$, then $\tilde F\cap L_\pi$ is exactly one of
$N^1_\pi,N^2_\pi$ (regarding these objects as edge sets).
  \end{itemize}
  \label{eq:thin-thick}
  \end{numitem1}

We explain that~\refeq{thin-thick} determines $\tilde F$ uniquely if $\tilde F$
is a $(C|C')$-flow. Indeed, from the construction of $G$ it easily follows that
there is an ordering $\pi(1),\ldots,\pi(m)$ ($m=|Y|$) of the couples in $M$
such that for $k=1,\ldots,m$, at least one of the set $\Zin_{\pi(k)}$ and
$\Zout_{\pi(k)}$ is entirely contained in $\cup_{d=1}^{k-1}(\Zin_{\pi(d)}\cup
\Zout_{\pi(d)})$ (which is automatically holds when $\pi(k)$ is a minimal
couple in $\Mlh\cup\Muh$ since some of $\Zin_{\pi(k)},\Zout_{\pi(k)}$ is
empty).

Now we argue as follows. If $\pi(k)$ is a minimal couple in $\Mlh$ and if
$\{i\}=\pi(k)\cap C$, then each of $N^1_{\pi(k)},N^2_{\pi(k)}$ consists of a
single edge and, obviously, $\tilde F$ contains exactly one of them, namely,
the edge incident to $s_i$. Applying~\refeq{thin-thick}(i) to this $\pi(k)$, we
obtain that $\tilde F$ contains $\Zout_{\pi(k)}$ (as well as
$\Zin_{\pi(k)}=\emptyset$). Similarly, if $\pi(k)$ is a minimal couple in
$\Muh$, then $\tilde F$ is determined within $L_{\pi(k)}$ and contains
$\Zin_{\pi(k)}$ (and $\Zout_{\pi(k)}=\emptyset$). In a general case, assume by
induction that for $d=1,\ldots,k-1$, ~$\tilde F$ is determined on $L_{\pi(d)}$
and contains $\Zin_{\pi(d)}\cup \Zout_{\pi(d)}$. Then, due to the above
ordering, $\tilde F$ contains at least one of $\Zin_{\pi(k)},\Zout_{\pi(k)}$.
Hence, by~\refeq{thin-thick}(ii), ~$\tilde F\cap L_{\pi(k)}$ is
$N^\alpha_{\pi(k)}$ for some $\alpha\in\{1,2\}$. (This remains true when
$\Zin_{\pi(k)}\cup \Zout_{\pi(k)}=\emptyset$, which is possible only if
$\pi(k)=ij\in\Mver$ and $i=j$.) Moreover, $\alpha$ is determined by considering
the end vertices (terminals) of $L_{\pi(k)}$ and checking which of them (or
none, or both) belongs to $S_C\cup T_{C'}$ (since such a terminal must be
covered by $N^\alpha_{\pi(k)}$). Now~\refeq{thin-thick}(i) enables us to
conclude with $\Zin_{\pi(k)}\cup\Zout_{\pi(k)}\subseteq \tilde F$, justifying
the induction.

So $\tilde F=F$. The uniqueness of a $(\bar C|\bar C\,')$-flow is shown
similarly. This yields~(P1). \smallskip

(c) To check~(P2), consider a proper pair $(C,C')$ for $(Y,Y')$ such that there
exist both a $(C|C')$-flow $F$ and a $(\bar C|\bar C\,')$-flow $F'$ in $G$. Our
goal is to show that $M\in\Mscr(C,C')$, i.e., the following hold: (c1)
~$|\pi\cap C|=1$ for $\pi\in\Mlh$; (c2) ~$|\pi\cap C'|=1$ for $\pi\in\Muh$; and
(c3) ~$(i\in C) \Leftrightarrow (j\in C')$ for $\pi=ij\in\Mver$. We consider
the above ordering $\pi(1),\ldots,\pi(r)$ on $M$ and use induction on $k$,
assuming that the corresponding relation among (c1)--(c3) holds for each
$\pi(d)$ with $d<k$.

When $\pi(k)$ is a minimal couple in $\Mlh$, ~$\pi(k)\subseteq C$ would imply
that $F$ contains both edges of the 2-edge path $L_{\pi(k)}$, which is
impossible (since these edges enter the same vertex). For a similar reason,
$\pi(k)\cap C=\emptyset$ would imply the nonexistence of $F'$. So $\pi(k)$
satisfies~(c1). Similarly a minimal couple in $\Muh$ satisfies~(c2). Also if
$\pi(k)=ij\in\Mver$ and $i=j$, then $L_{\pi(k)}$ consists of a single edge,
and~(c3) is trivial. For a general $k$, using induction and arguing as in
part~(b), we may assume that there is a nonempty set among
$\Zin_{\pi(k)},\Zout_{\pi(k)}$ which is contained in both $F,F'$. Then there
are $\alpha,\beta\in\{1,2\}$ such that $F\cap L_{\pi(k)}=N^\alpha_{\pi(k)}$ and
$F'\cap L_{\pi(k)}=N^\beta_{\pi(k)}$. The matchings $N^\alpha_{\pi(k)},
N^\beta_{\pi(k)}$ determine the location of the end vertices (terminals) $u,v$
of $L_{\pi(k)}$ w.r.t. $C,C'$ and their complements to $Y,Y'$, and each of
$u,v$ is related to exactly one of $C\cup C'$ and $\bar C\cup\bar C\,'$. This
implies $\alpha\ne \beta$, and validity of (c1)--(c3) for $\pi(k)$ follows.
Finally,~\refeq{thin-thick}(i) provides that each of $F,F'$ contains both
$\Zin_{\pi(k)},\Zout_{\pi(k)}$, completing the proof of~(P2).
\medskip

It remains to consider the situation when some of $X,X'$ or both are nonempty.
It reduces to the previous case by replacing each element of $X$ ($X'$) by a
couple of elements in $Y$ (resp., $Y'$) and adding such couples to the matching
in question. More precisely, an element $i\in X$ is replaced by consecutive
elements $i',i''$ added to $Y$ (which are inserted into the linearly ordered
set $X\cup Y$ in place of $i$). Similarly, an element $j\in X'$ is replaced by
consecutive elements $j',j''$ added to $Y'$. Then the resulting sets $\tilde Y$
and $\tilde Y'$ have the same size, equal to $|Y|+2|X|=|Y'|+2|X'|$.
Accordingly, we extend each planar perfect matching $M$ on $Y\sqcup Y'$ to a
planar perfect matching $\tilde M$ on $\tilde Y\sqcup \tilde Y'$ by adding the
lower (upper) horizontal couple $\pi^i=i'i''$ for each $i\in X$ (resp.,
$\pi^j=j'j''$ for each $j\in X'$). Note that the added couples are minimal for
the corresponding partial orders, and the pairs $(\tilde C\subseteq \tilde
Y,\tilde C'\subseteq \tilde Y')$ having $\tilde M$ as a feasible matching are
exactly those obtained from the pairs $(C\subseteq Y,C'\subseteq Y')$
satisfying $M\in\Mscr(C,C')$ by adding to $C$ one element from $\{i',i''\}$ for
each $i\in X$, and adding to $C'$ one element from $\{j',j''\}$ for each $j\in
X'$.

Let $\tilde G$ be the graph obtained by applying the previous construction to
such an $\tilde M$. Then each couple $\pi^i$, $i\in X$, generates the 2-edge
path $L_{\pi^i}$ connecting the sources $s_{i'},s_{i''}$, and similarly for
$X'$ and sinks. Shrinking each $L_{\pi^i}$ into one point, regarded as the
source $s_i$ when $i\in X$ and as the sink $t_i$ when $i\in X'$, we obtain the
desired graph $G$ for $X,X',Y,Y'$ and $M$. It is straightforward to verify that
properties~(P1),(P2) for $\tilde G,\tilde Y,\tilde Y',\tilde M$ imply those for
$G,X,X',Y,Y',M$.

This completes the proof of Theorem~\ref{tm:nonbalance}, and
Theorem~\ref{tm:main} follows. \hfill$\qed$
 \end{proof}

Note that any FG-function $f=f_w$ on $\Escr^{n,n'}$ obtained by the
construction in the above proof takes nonnegative integer values (since the
weighting $w$ is nonnegative). This together with the fact that the function of
minors of a totally nonnegative matrix is an FG-function gives the following:
  \begin{corollary} \label{cor:nn_matr}
~2-patterns $\Ascr_0,\Bscr_0\Subset\Pi_{m,m'}$ are balanced if and only if the
corresponding quadratic relations (concerning any $n,n',X,X',Y,Y'$ as above)
hold for any function $f:\Escr^{n,n'}\to\Zset_{\ge 0}$ which is the function of
minors of a totally nonnegative $n'\times n$ matrix. Furthermore, when
$\Ascr_0,\Bscr_0\Subset\Pi_{m,m'}$ are not balanced, for any corresponding
$n,n',X,X',Y,Y'$, there exists, and can be explicitly constructed, a totally
nonnegative $n'\times n$ matrix  such that the function $f$ of its minors obeys
inequality~\refeq{non-balan} (where $\Ascr:=\gamma_{Y,Y'}(\Ascr_0)$ and
$\Bscr:=\gamma_{Y,Y'}(\Bscr_0)$).
   \Xcomment{

For $Y\subseteq [n]$ and $Y'\subseteq [n']$, 2-families
$\Ascr,\Bscr\Subset\Pi_{Y,Y'}$ are balanced if and only if the corresponding
quadratic relations associated to $\Ascr,\Bscr$ hold for any function
$f:\Escr^{n,n'}\to\Zset_{\ge 0}$ which is the function of minors of a totally
nonnegative $n'\times n$ matrix.
  }
  \end{corollary}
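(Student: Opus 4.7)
The plan is to derive the corollary by packaging three ingredients already available in the paper: Theorem~\ref{tm:main}, Theorem~\ref{tm:nonbalance}, and Lindstr\"om's lemma (as recalled in the Introduction together with Brenti's converse for totally nonnegative matrices).

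For the ``only if'' direction, suppose $\Ascr_0,\Bscr_0$ are balanced. By Proposition~\ref{pr:balance-sq} the SQ-relation~\refeq{gen_sq} holds for every FG-function over every commutative semiring, and in particular over $\Rset_{\ge 0}$. By Brenti's theorem cited before Definition~1, the function of minors of any totally nonnegative $n'\times n$ matrix is an FG-function arising from some planar network with a nonnegative real vertex weighting. Hence the relation holds for such a minor function, proving one direction.

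For the ``if'' (and constructive) direction, assume $\Ascr_0,\Bscr_0$ are not balanced and fix $n,n',X,X',Y,Y'$. Apply Theorem~\ref{tm:nonbalance}: it produces an explicit planar network $G=(V,E)$ with $n$ sources and $n'$ sinks such that the all-unit weighting $w\equiv 1$ gives an FG-function $f=f_w$ on $\Escr^{n,n'}$ satisfying~\refeq{non-balan}. Now form the $n'\times n$ matrix $M$ whose entry $m_{ji}$ equals $|\Phi_{\{i\}|\{j\}}|$ in $G$ (this is Lindstr\"om's matrix for $(G,w\equiv 1)$). By Lindstr\"om's lemma, for every $(I,I')\in\Escr^{n,n'}$ the minor of $M$ with column set $I$ and row set $I'$ equals $f(I|I')=|\Phi_{I|I'}|$, which is a nonnegative integer. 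Hence every minor of $M$ is a nonnegative integer, so $M$ is a totally nonnegative integer matrix, and its minor function is exactly $f$. Therefore~\refeq{non-balan} becomes a genuine numerical inequality for the minors of the totally nonnegative matrix $M$.

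There is essentially no serious obstacle: the only point requiring a brief justification is that the matrix produced by Lindstr\"om's lemma from $(G,w\equiv 1)$ is automatically totally nonnegative, which is immediate because each of its minors counts a set of disjoint path systems and so is a nonnegative integer. Thus the corollary is a clean consequence of Theorems~\ref{tm:main} and~\ref{tm:nonbalance} together with Lindstr\"om's construction, and the explicit matrix witnessing~\refeq{non-balan} in the non-balanced case is obtained by running the construction of Theorem~\ref{tm:nonbalance} for a matching $M$ with $|\Ascr_M|\neq|\Bscr_M|$ and then reading off the path-count matrix of the resulting unit-weighted planar network.
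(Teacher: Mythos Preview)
Your argument is correct and matches the paper's own justification, which is the short paragraph immediately preceding the corollary: the construction in Theorem~\ref{tm:nonbalance} uses the all-unit weighting, so the resulting FG-function is nonnegative-integer-valued and, via Lindstr\"om, is the minor function of a totally nonnegative integer matrix; combined with Brenti's converse for the other direction, this yields the corollary. You have simply spelled out the Lindstr\"om step for the constructive direction more explicitly than the paper does.
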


%----------------------- Sec.6

\section{\Large Applications to Schur functions}  \label{sec:Schur}

It is well known that Schur functions (polynomials) are expressed as minors of
a certain matrix, by Jacobi--Trudi's formula. Therefore, these functions
satisfy many quadratic relations, in particular, ones of Plucker type.
In~\cite{Flm,FK} and some other works (see a discussion in~\cite{Flm}) one
shows how to establish quadratic relations for ordinary and skew Schur
functions by use of a lattice paths method based on the Gessel--Viennot
interpretation of semistandard Young tableaux~\cite{GV}. This lattice path
method is, in fact, a specialization to a particular planar network of the flow
approach that we described in Sections~\ref{sec:flow},\ref{sec:balan}. Below we
give a brief discussion on this subject.

Recall that a \emph{partition} of length $r$ is an $r$-tuple $\lambda$ of
weakly decreasing nonnegative integers $\lambda_1\ge\lambda_2\ge \ldots\ge
\lambda_r$. The \emph{Ferrers diagram} of $\lambda$ is meant to be the array
$F_\lambda$ of cells with $r$ left-aligned rows containing $\lambda_i$ cells in
$i$th row (the row indices will grow from the bottom to the top). For
$N\in\Nset$, an $N$-\emph{semistandard Young tableau} of shape $\lambda$ is a
filling $T$ of $F_\lambda$ with numbers from $[N]$ so that the numbers weakly
increase in each row and strictly increase in each column. We associate to $T$
the monomial $x^T$ that is the product of variables $x_1,\ldots,x_N$, each
$x_k$ being taken in the degree equal to the number of occurrences of $k$ in
$T$. Then the \emph{Schur function} for $\lambda$ and $N$ is the polynomial
   $$
   s_\lambda=s_\lambda(x_1,\ldots,x_N):=\sum\nolimits_T x^T,
   $$
where the sum is over all $N$-semistandard Young tableaux of shape $\lambda$.
Besides, one often considers a \emph{skew} Schur function $s_{\lambda/\mu}$,
where $\mu$ is an $r$-partition with $\mu_i\le\lambda_i$; it is defined in a
similar way w.r.t. the skew Ferrers diagram $F_{\lambda/\mu}$ obtained by
removing from $F_\lambda$ the cells of $F_\mu$, along with its semistandard
fillings. When needed, an ``ordinary'' diagram $F_\lambda$ is regarded as the
skew one $F_{\lambda/\mu}$, where $\mu=(0,\ldots,0)$, and similarly for
tableaux.

There is an important one-to-one correspondence between the $r$-partitions
$\lambda$ and the $r$-element subsets $A_\lambda$ of the set $\Zset_{>0}$ of
positive integers (or a set $[n]$ for $n\ge \lambda_1+r$). This is given by
  \begin{equation} \label{eq:A_lambda}
  \lambda=(\lambda_1\ge\ldots\ge\lambda_r) \Longleftrightarrow
  A_\lambda:=\{\lambda_r+1,\lambda_{r-1}+2,\ldots,\lambda_1+r\}.
  \end{equation}

Let us form the directed square grid $\Gamma=\Gamma(N)$ whose vertices are the
points $(i,j)$ for $i\in\Zset_{>0}$ and $j\in[N]$ and whose edges $e$ are
directed upwards or to the right, i.e., $e=((i,j),(i,j+1))$ or
$((i,j),(i+1,j))$ (instead, one can take a finite truncation of this grid). The
vertices $s_i:=(i,1)$ and $t_i:=(i,N)$ are regarded as the sources and sinks in
$\Gamma$, respectively, and we assign to each horizontal edge $e$ at level $h$
the weight to be the indeterminate $x_h$:
  \begin{equation} \label{eq:w-x}
  w(e):=x_h \qquad\mbox{for ~$e=((i,h),(i+1,h))$, ~$i\in\Zset_{>0}$,
  ~$h=1,\ldots,N$.}
  \end{equation}

Now using the Gessel--Viennot model~\cite{GV} (in a slightly different form),
one can associate to an $N$-semistandard skew Young tableau $T$ with shape
$\lambda/\mu$ the system $\Pscr_T=(P_1,\ldots,P_r)$ of directed paths in
$\Gamma$, where for $k=1,\ldots,r$:
  \begin{numitem1}
~$P_k$ corresponds to $(r+1-k)$th row of $T$; it goes from the source
$s_{k+\mu_{r+1-k}}$ to the sink $t_{k+\lambda_{r+1-k}}$; and for
$h=1,\ldots,N$, the number of horizontal edges of $P_k$ at level $h$ is equal
to the number of occurrences of $h$ in the $k$th row of $T$.
  \label{eq:shur-paths}
  \end{numitem1}

So the sources used in $\Pscr_T$ are the $s_i$ for $i\in A_\mu$, and the sinks
are the $t_j$ for $j\in A_\lambda$. Observe that the semistandardness of $T$
implies that these paths are pairwise disjoint, i.e., $\Pscr_T$ is an
$(A_\mu|A_\lambda)$-flow in $\Gamma$. One can see the converse as well: if
$\Pscr$ is an $(A_\mu|A_\lambda)$-flow in $\Gamma$, then the filling $T$ of
$F_{\lambda/\mu}$ determined, in a due way, by the horizontal edges of paths in
$\Pscr$ is just a semistandard skew Young tableau, and one has $\Pscr_T=\Pscr$.
This gives a nice bijection between corresponding flows and tableaux. The next
picture illustrates an example of a semistandard Young tableau $T$ with $N=6$,
$r=5$, $\lambda=(6,5,3,3,2)$ and $\mu=(2,2,1,1,0)$, and its corresponding flow
$\Pscr_T=(P_1,\ldots,P_5)$.

 \begin{center}
  \unitlength=1mm
  \begin{picture}(140,40)
  \put(22,4){\line(1,0){24}}
  \put(22,10){\line(1,0){24}}
  \put(16,16){\line(1,0){24}}
  \put(16,22){\line(1,0){12}}
  \put(10,28){\line(1,0){18}}
  \put(10,34){\line(1,0){12}}
  \put(10,28){\line(0,1){6}}
  \put(16,16){\line(0,1){18}}
  \put(22,4){\line(0,1){30}}
  \put(28,4){\line(0,1){24}}
  \put(34,4){\line(0,1){12}}
  \put(40,4){\line(0,1){12}}
  \put(46,4){\line(0,1){6}}
  \put(12.5,-1){1}
  \put(18.5,-1){2}
  \put(24.5,-1){3}
  \put(30.5,-1){4}
  \put(36.5,-1){5}
  \put(42.5,-1){6}
  \put(4,6){1}
  \put(4,12){2}
  \put(4,18){3}
  \put(4,24){4}
  \put(4,30){5}
  \put(24,6){{\bf 1}}
  \put(30,6){{\bf 3}}
  \put(36,6){{\bf 3}}
  \put(42,6){{\bf 5}}
  \put(24,12){{\bf 2}}
  \put(30,12){{\bf 4}}
  \put(36,12){{\bf 4}}
  \put(18,18){{\bf 1}}
  \put(24,18){{\bf 3}}
  \put(18,24){{\bf 2}}
  \put(24,24){{\bf 6}}
  \put(12,30){{\bf 2}}
  \put(18,30){{\bf 5}}
  \put(36,24){$T$}
  \put(70,4){\line(1,0){66}}
  \put(70,10){\line(1,0){66}}
  \put(70,16){\line(1,0){66}}
  \put(70,22){\line(1,0){66}}
  \put(70,28){\line(1,0){66}}
  \put(70,34){\line(1,0){66}}
  \put(70,4){\line(0,1){30}}
  \put(76,4){\line(0,1){30}}
  \put(82,4){\line(0,1){30}}
  \put(88,4){\line(0,1){30}}
  \put(94,4){\line(0,1){30}}
  \put(100,4){\line(0,1){30}}
  \put(106,4){\line(0,1){30}}
  \put(112,4){\line(0,1){30}}
  \put(118,4){\line(0,1){30}}
  \put(124,4){\line(0,1){30}}
  \put(130,4){\line(0,1){30}}
  \put(136,4){\line(0,1){30}}
  \put(70,4){\circle{2}}
  \put(82,4){\circle{2}}
  \put(88,4){\circle{2}}
  \put(100,4){\circle{2}}
  \put(106,4){\circle{2}}
  \put(82,34){\circle{2}}
  \put(94,34){\circle{2}}
  \put(100,34){\circle{2}}
  \put(118,34){\circle{2}}
  \put(130,34){\circle{2}}
  \put(69.5,-1){1}
  \put(75.5,-1){2}
  \put(81.5,-1){3}
  \put(87.5,-1){4}
  \put(93.5,-1){5}
  \put(99.5,-1){6}
  \put(105.5,-1){7}
  \put(111.5,-1){8}
  \put(117.5,-1){9}
  \put(123,-1){10}
  \put(129,-1){11}
  \put(135,-1){12}
  \put(64,3){1}
  \put(64,9){2}
  \put(64,15){3}
  \put(64,21){4}
  \put(64,27){5}
  \put(64,33){6}
  \put(80,37){$P_1$}
  \put(92,37){$P_2$}
  \put(98,37){$P_3$}
  \put(116,37){$P_4$}
  \put(128,37){$P_5$}
\thicklines{
  \put(88,4){\line(1,0){6}}
  \put(107,4){\line(1,0){5}}
  \put(70,10){\line(1,0){6}}
  \put(82,10){\line(1,0){6}}
  \put(100,10){\line(1,0){6}}
  \put(94,16){\line(1,0){6}}
  \put(112,16){\line(1,0){12}}
  \put(106,22){\line(1,0){12}}
  \put(76,28){\line(1,0){6}}
  \put(124,28){\line(1,0){6}}
  \put(88,34){\line(1,0){5}}
  \put(88,4.2){\line(1,0){6}}
  \put(107,4.2){\line(1,0){5}}
  \put(70,10.2){\line(1,0){6}}
  \put(82,10.2){\line(1,0){6}}
  \put(100,10.2){\line(1,0){6}}
  \put(94,16.2){\line(1,0){6}}
  \put(112,16.2){\line(1,0){12}}
  \put(106,22.2){\line(1,0){12}}
  \put(76,28.2){\line(1,0){6}}
  \put(124,28.2){\line(1,0){6}}
  \put(88,34.2){\line(1,0){5}}
  \put(70,5){\line(0,1){5}}
  \put(76,10){\line(0,1){18}}
  \put(82,28){\line(0,1){5}}
  \put(82,5){\line(0,1){5}}
  \put(88,10){\line(0,1){24}}
  \put(94,4){\line(0,1){12}}
  \put(100,16){\line(0,1){17}}
  \put(100,5){\line(0,1){5}}
  \put(106,10){\line(0,1){12}}
  \put(118,22){\line(0,1){11}}
  \put(112,4){\line(0,1){12}}
  \put(124,16){\line(0,1){12}}
  \put(130,28){\line(0,1){5}}
  \put(70.2,5){\line(0,1){5}}
  \put(76.2,10){\line(0,1){18}}
  \put(82.2,28){\line(0,1){5}}
  \put(82.2,5){\line(0,1){5}}
  \put(88.2,10){\line(0,1){24}}
  \put(94.2,4){\line(0,1){12}}
  \put(100.2,16){\line(0,1){17}}
  \put(100.2,5){\line(0,1){5}}
  \put(106.2,10){\line(0,1){12}}
  \put(118.2,22){\line(0,1){11}}
  \put(112.2,4){\line(0,1){12}}
  \put(124.2,16){\line(0,1){12}}
  \put(130.2,28){\line(0,1){5}}
 }
  \end{picture}
   \end{center}

Note that when $T$ is ``ordinary'' (i.e., $\mu={\bf 0}$), the sources used in
$\Pscr_T$ are $s_1,s_2,\ldots,s_r$. We say that this $\Pscr_T$ is a
\emph{co-flag flow} (it becomes a flag flow if we reverse the edges of $\Gamma$
and swap the sources and sinks).

The above bijection between the $N$-semistandard skew Young tableaux with shape
$\lambda/\mu$ and the $(A_\mu|A_\lambda)$-flows in $\Gamma(N)$ implies that
(ordinary of skew) Schur functions are ``values'' of the flow-generated
function $f_w$ for $\Gamma$ and the weighting $w$ as in~\refeq{w-x}. (It leads
to no confusion that the weights are given on the horizontal edges of $\Gamma$
and belong to a polynomial ring.) This gives rise to establishing quadratic
relations on Schur functions, by properly translating SQ-relations on
FG-functions. Below we give two examples (the reader may try to extend the list
of examples by using SQ-relations from Section~\ref{sec:relat}).
\smallskip

1) A particular relation on ordinary Schur functions with $r=2$ can be derived
from PSQ-relations on quadruples. This reads as
   \begin{equation} \label{eq:schur-ijkl}
s_{(k,i)}s_{(\ell,j)}=s_{(\ell,i)}s_{(k,j)}+s_{(j-1,i)}s_{(\ell,k+1)},
  \end{equation}
where $i<j\le k<\ell$. Letting $(i',j',k',\ell'):=(i+1,j+1,k+2,\ell+2)$ and
$f:=f_w$, one can see that~\refeq{schur-ijkl} turns into
  $$
f([2]|\,i'k')\,f([2]|\,j'\ell')=f([2]|\,i'\ell')\,f([2]|\,j'k')
                             +f([2]|\,i'j')\,f([2]|\,k'\ell'),
  $$
which, in view of $i'<j'<k'<\ell'$, is nothing else than the co-flag
counterpart of the AP4-relation~\refeq{AP4} in case $X=\emptyset$. (Note that
relation~\refeq{schur-ijkl} can be generalized by adding to each 2-component
partition a fixed partition $(\lambda_1,\ldots,\lambda_{r'})$ such that either
$\lambda_{r'}\ge \ell$ or $i\ge\lambda_1$.) \smallskip

2) The next example is shown in~\cite{FK} by use of Dodgson's condensation
formula for matrix minors. It says that a partition
$(\lambda_1,\ldots,\lambda_{r})$ with $\lambda_r>0$ yields
  \begin{equation} \label{eq:schur-dodg}
s_{(\lambda_1,\ldots,\lambda_{r-1})}\,s_{(\lambda_2,\ldots,\lambda_{r})}
=s_{(\lambda_2,\ldots,\lambda_{r-1})}\,s_{(\lambda_1,\ldots,\lambda_{r})}
+s_{(\lambda_2-1,\ldots,\lambda_{r}-1)}\,s_{(\lambda_1+1,\ldots,\lambda_{r-1}+1)}.
  \end{equation}
For each of the six partitions $\lambda^{(i)}$ in this relation, $i=1,\ldots,6$
(from left to right), we take the set $A_{\lambda^{(i)}}$ as
in~\refeq{A_lambda} and form the corresponding subsets $S^{(i)},T^{(i)}$ of
sources and sinks in $\Gamma$, respectively. In addition, for $i=1,3,5$, we
shift each of the sets $S^{(i)},T^{(i)}$ by one position to the right (which
leads to equivalent sets of flows, as well as their weights, in $\Gamma$). Then
we obtain the following six source-sink index pairs (from left to right, as
before), denoting $X:=\{2,\ldots,r-1\}$ and
$X':=\{\lambda_2+r-1,\lambda_3+r-2,\ldots,\lambda_{r-1}+2\}$:
   \begin{gather*}
   (Xr|X'(\lambda_1+r)),\;\; (1X|\,(\lambda_r+1)X'),\;\; (X|X'), \;\;
(1Xr|\,(\lambda_r+1)X'(\lambda_1+r)),\\
     \;\;(Xr|\,(\lambda_r+1)X'),\;\;(1X|X'(\lambda_1+r)).
   \end{gather*}
Now define $i:=1$, ~$k:=r$, ~$i':=\lambda_r+1$, ~$k':=\lambda_1+r$. Then $i<k$,
~$i'<k'$, ~$X\cap\{i,k\}=\emptyset$, ~$X'\cap\{i',k'\}=\emptyset$,
and~\refeq{schur-dodg} turns into the following relation for $f=f_w$:
  $$
  f(Xk|X'k')\,f(iX|\,i'X')=f(X|X')\,f(iXk|\,i'X'k')+f(Xk|\,i'X')\,f(iX|X'k'),
    $$
which is just Dodgson's condensation formula (cf.~\refeq{Dodg_orig}).

%----------------------- Sec.7

\section{\Large FG-functions over a semiring with division}  \label{sec:laurent}

In this section we assume that $\frakS$ is a commutative semiring with
division, i.e., $\frakS$ contains $\underline 1$ and the operation $\odot$ is
invertible (i.e., $(\frakS,\odot)$ is an abelian group). Two important special
cases, mentioned in the Introduction, are: the set $\Rset_{>0}$ of positive
reals, and the tropicalization $\frakL^{\rm trop}$ of a totally ordered abelian
group $\frakL$, in particular, the set $\Rset_{\max}$ of reals with the
operations $\oplus=\max$ and $\odot=+$. It turns out that for such a $\frakS$,
the set $\bf{FG}=\bf{FG}_n(\frakS)$ of flag-flow-generated functions on
$2^{[n]}$ possesses the following nice properties:
   \begin{numitem1}  \label{eq:bas-Laur}
   \begin{itemize}
\item[(i)] All these functions $f$ can be generated by flows in one planar network,
namely, in the half-grid $\Gamma^\triangle_n$ (see Fig.~\ref{fig:Gamma});
\item[(ii)] $\bf{FG}$ coincides with the set of functions $f:2^{[n]}\to \frakS$
satisfying P3-relation~\refeq{SP3} for all $i,j,k,X$ (so~\refeq{SP3} provides
the other PSQ-relations);
\item[(iii)] $\bf{FG}$ has as a basis the set $\Iscr_n$ of intervals in $[n]$
(including the ``empty interval'' $\emptyset$), called the \emph{standard}
basis for $\bf{FG}$;
\item[(iv)] The values of $f$ are expressed as (algebraic or tropical) Laurent
polynomials in its values on $\Iscr_n$.
  \end{itemize}
   \end{numitem1}

\noindent Here a \emph{basis} for this $\bf{FG}$ is meant to be a collection
$\Iscr'\subset 2^{[n]}$ such that the restriction map $f\mapsto
{f}\rest{\Iscr'}$ gives a bijection between $\bf{FG}$ and
$\mathfrak{S}^{\Iscr'}$; in other words, any function in $\bf{FG}$ is
determined by its values on $\Iscr'$, and the latter values can be chosen
arbitrarily in $\mathfrak{S}$.

The facts exhibited in~\refeq{bas-Laur} are discussed in~\cite{DKK1} (mostly
for $\frakS:=\Rset_{\max}$) and in~\cite{BFZ,FZ} (concerning~(iv)). The
arguments given there can be directly extended to an arbitrary $\frakS$ as
above, and below we give a brief outline (which is sufficient to restore the
details with help of~\cite{DKK1}). As before, an interval $\{p,p+1,\ldots,q\}$
in $[n]$ is denoted by $[p..q]$. \medskip

\noindent \textbf{A.} ~An important feature of $\Gamma_n^\triangle=(V,E)$ is
that for any nonempty interval $I=[q..r]$ in $[n]$, there exists exactly one
feasible flow $\phi_I$ from $S_I$ to the sinks $t_1,\ldots,t_{|I|}$; namely,
$\phi_I$ goes through the vertices $(i,j)$ occurring in the rectangle
$[r]\times[r-q+1]$ (more precisely, satisfying $i\le r$, ~$j\le r-q+1$ and
$i\ge j$). Therefore, given a weighting $w:V\to \frakS$, the values of $f=f_w$
on the nonempty intervals $[q..r]$ are viewed as
  \begin{equation} \label{eq:fpq}
  f([q..r])=\bigodot_{j\le i\le r,\; 1\le j\le r-q+1} w(i,j).
  \end{equation}

Note that the number $\frac{n(n+1)}{2}$ of vertices of $\Gamma_n^\triangle$ is
equal to the number of nonempty intervals in $[n]$ and system~\refeq{fpq} is
non-degenerate. So, using the division in $\frakS$, denoted as $\odivide$, we
can in turn express the weights of vertices via the values of $f$ on the
intervals. This is computed as
   \begin{equation}\label{eq:wij}
   w(i,j)= \left\{
    \begin{array}{ll}
  (f(I_{i,j})\odot f(I_{i-1,j-1}))\odivide (f(I_{i-1,j})\odot f(I_{i,j-1})) & \mbox{for $i>j$},\\
   f(I_{i,j})\odivide f(I_{i,j-1}) & \mbox{for $i=j$},
  \end{array}
        \right.
   \end{equation}
denoting by $I_{i',j'}$ the interval $[(i'-j'+1).. i']$ and letting
$f(I_{i',0}):=\underline{1}$.

Thus, the correspondence $w\mapsto f_w$ gives a bijection between the set of
weightings $w:V\to\frakS$ and the set $\frakS^{\Iscr_n^+}$, where $\Iscr_n^+$
denotes the set of nonempty intervals in $[n]$. \medskip

\noindent\textbf{B.} ~We know that for a weighting $w$, the value of $f=f_w$ on
any nonempty subset $A\subseteq [n]$ is represented by a ``polynomial'' in
variables $w(v)$, $v\in V$, namely, by an $\oplus$-sum of products
$\odot(w(v)\,\colon v\in V')$ for some subsets $V'\subseteq V$. Substituting
into this polynomial the corresponding terms from~\refeq{wij}, we obtain an
expression of the form
   $$
   f(A)=\oplus\left( \Pscr_k\,\colon k=1,\ldots,N\right),
   $$
where each $\Pscr_k$ is a ``monomial'' $\odot(f(I)^{\odot\sigma_k(I)} \,\colon
I\in\Iscr_n^+)$ with integer (possibly negative) degrees $\sigma_k(I)$. This
means that $f(A)$ is a Laurent polynomial (w.r.t. the addition $\oplus$ and
multiplication $\odot$) in variables $f(I)$, $I\in\Iscr_n^+$.

(Analyzing possible flows in $\Gamma_n^\triangle$, one can show that the
degrees $\sigma_k(I)$ are bounded and, moreover, belong to $\{-1,0,1,2\}$. This
is proved in~\cite{DKK1} for the tropical case and can be directly extended to
an arbitrary commutative semiring $\frakS$ with division.) \medskip

\noindent \textbf{C.} ~A simple fact (cf.~\cite{DKK1}) is that any function
$f:2^{[n]}\to \Rset$ obeying TP3-relation~\refeq{TP3} is determined by its
values on $\Iscr_n$. The proof of this fact is directly extended to $\frakS$ in
question, as follows (a sketch). If $S\subseteq[n]$ is not an interval, define
$i:=\min(S)$, $k:=\max(S)$, $X:=S-\{i,k\}$, and choose an element  $j$ in
$[i..k]-S$. Then for a function $f$ on $2^{[n]}$ obeying
P3-relation~\refeq{SP3}, the value $f(S)$ is expressed via the values $f(S')$
on five sets $S'=Xi,Xj,Xk,Xij,Xjk$. Since $\max(S')-\min(S')< \max(S)-\min(S)$,
we can apply induction on $\max(S)-\min(S)$.

Using this fact and reasonings above, we obtain that $\Iscr_n$ is indeed a
basis for the functions in $\bf{FG}_n(\frakS)$ and that all these functions are
generated by flows in $\Gamma_n^\triangle$ (so they are bijective to weightings
$w:V\to\frakS$, up to their values on $\emptyset$, and possess the Laurentness
property as above). \medskip

Next we explain how to extend the properties exhibited in~\refeq{bas-Laur} to
the set $\bf{FG}_{n,n'}(\frakS)$ of flow-generated functions on $\Escr^{n,n'}$
taking values in a commutative semiring $\frakS$ with division. Instead of
P3-relation~\refeq{SP3} which has shown its importance in the flag flow case, a
central role will now be played by three special SQ-relations. The first one is
   \begin{multline} \label{eq:genP3}
    f(Xik|X'k')\odot f(Xj|X')\\
   =(f(Xij|X'k')\odot f(Xk|X'))\oplus(f(Xjk|X'k')\odot f(Xi|X')),
    \end{multline}
where $i<j<k$ and $X$ are as before, $k'\in[n']$ and $X'\subseteq[n']-\{k'\}$.
We refer to~\refeq{genP3} as the \emph{generalized P3-relation}. (The pair of
2-patterns for it is equivalent to the pair of 1-patterns for~\refeq{SP3}. In
fact, for our purposes it suffices to assume that $k'>\max(X')$.) The second
one is the SQ-relation symmetric to~\refeq{genP3}:
   \begin{multline} \label{eq:gen-coP3}
    f(Xk|X'i'k')\odot f(X|X'j')\\
   =(f(Xk|X'i'j')\odot f(X|X'k'))\oplus(f(Xk|X'j'k')\odot f(X|X'i')).
    \end{multline}
And the third one is Dodgson's type relation~\refeq{dodgson} for
$i,k,X,i',k',X'$ such that (cf.~\refeq{Dodg_orig}):
  \begin{equation} \label{eq:spec_dodg}
k-i=k'-i',\quad X=[i+1..k-1],\quad\mbox{and}\;\; X'=[i'+1..k'-1].
  \end{equation}

Let $\bf{K}_{n,n'}(\frakS)$ be the set of functions $f:\Escr^{n,n'}\to\frakS$
satisfying~\refeq{genP3}, \refeq{gen-coP3}, and~\refeq{dodgson}
with~\refeq{spec_dodg}. Besides, define $\Iscr_{n,n'}$ to be the set of pairs
$(I\subseteq[n], I'\subseteq[n'])$ such that both $I$ and $I'$ are intervals
and $|I|=|I'|$; we refer to $(I,I')$ as a (\emph{consistent}) \emph{double
interval}. Two subsets of double intervals are distinguished: let
$\Dscr^1_{n,n'}$ consist of those $(I,I')$ that the first interval $I$ is
initial (i.e., contains 1), and $\Dscr^2_{n,n'}$ of those $(I,I')$ that the
second interval $I'$ is initial; we say that such an $(I,I')$ is a
\emph{pressed} double interval.
 \begin{theorem} \label{tm:gen_bas-Laur}
For $\bf{F}:=\bf{FG}_{n,n'}(\frakS)$, ~$\bf{K}:=\bf{K}_{n,n'}(\frakS)$, and
$\Dscr:=\Dscr^1_{n,n'}\cup \Dscr^2_{n,n'}$, the following properties hold:
  \begin{itemize}
\item[{\rm(i)}] $\bf{K}$ coincides with $\bf{F}$;
\item[{\rm(ii)}] $\Dscr$ is a basis for the functions in $\bf{F}$;
\item[{\rm(iii)}] For each $f\in \bf{F}$, the values of $f$ are Laurent
polynomials (over $\frakS$) in the values $f(I|I')$, $(I,I')\in \Dscr$.
  \end{itemize}
  \end{theorem}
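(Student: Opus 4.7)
The plan is to prove (i), (ii), (iii) simultaneously by combining Proposition~\ref{pr:balance-sq} with an explicit analysis of flows in the grid $\Gamma_{n,n'}$. The main obstacle is Stage~3 below.

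\emph{Stage 1: $\bf{F}\subseteq\bf{K}$.} Each of the three defining relations of $\bf{K}$ is induced by a balanced pair of 2-patterns. For \refeq{dodgson} this is item~7 of Section~\ref{sec:relat}. For \refeq{genP3} the 2-patterns on $([3],[1])$ are $\Ascr_0=\{(\{1,3\},\{1\})\}$ and $\Bscr_0=\{(\{1,2\},\{1\}),(\{2,3\},\{1\})\}$; of the three perfect matchings on $\{1,2,3\}\sqcup\{1'\}$, only $\{12,3\text{-}1'\}$ and $\{1\text{-}1',23\}$ are planar, and checking \refeq{match1}(i)--(ii) shows $\Mscr(\Ascr_0)=\{\{12,3\text{-}1'\},\{1\text{-}1',23\}\}=\Mscr(\Bscr_0)$. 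A symmetric computation covers \refeq{gen-coP3}. Proposition~\ref{pr:balance-sq} then yields $\bf{F}\subseteq\bf{K}$.

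\emph{Stage 2: any $f\in\bf{K}$ is determined by $f|_\Dscr$.} The reduction proceeds in three waves. If $S$ is not an interval, pick $i=\min S$, $k=\max S$, some $j\in[i+1..k-1]\setminus S$, and $k'=\max S'$; setting $X=S\setminus\{i,k\}$ and $X'=S'\setminus\{k'\}$, relation~\refeq{genP3} combined with the division in $\frakS$ solves for $f(S|S')=f(Xik|X'k')$ via values whose first-coordinate span is strictly less than $\max S-\min S$. A symmetric use of \refeq{gen-coP3} then makes $S'$ an interval, so $f$ is reduced to values on double intervals. Finally, for a non-pressed double interval $(I|I')=([a..b]|[a'..b'])$, $a,a'\ge 2$, the substitution $i=a-1$, $k=b$, $i'=a'-1$, $k'=b'$, $X=[a..b-1]$, $X'=[a'..b'-1]$ satisfies \refeq{spec_dodg}; dividing \refeq{dodgson} by $f([a-1..b-1]|[a'-1..b'-1])$ expresses $f([a..b]|[a'..b'])$ as a Laurent combination of values at five other double intervals, each strictly smaller in the lexicographic measure $\bigl(a+a'-2,\,b-a+1\bigr)$. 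Induction on this measure finishes the reduction to $\Dscr$.

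\emph{Stage 3: existence via $\Gamma_{n,n'}$.} I claim the map $w\mapsto f_w|_\Dscr$ is a bijection from weightings $w:V(\Gamma_{n,n'})\to\frakS$ onto $\frakS^{\Dscr\setminus\{(\emptyset|\emptyset)\}}$. A short induction on the source index shows that for each pressed $(I|I')$ the grid has a unique $(I|I')$-flow $\phi_{I,I'}$---path $P_i$ is forced to rise along column $i$ to the level of its target sink and then go leftward to column~$1$---so the vertex set $V_{\phi_{I,I'}}$ is exactly the rectangle $[1..\max I]\times[1..\max I']$. Pressed double intervals correspond bijectively to such rectangles, yielding $|\Dscr|-1=nn'=|V(\Gamma_{n,n'})|$. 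The system
\[
f_w(I|I')=\bigodot\nolimits_{(i,j)\in[1..\max I]\times[1..\max I']}w(i,j),\qquad (I,I')\in\Dscr\setminus\{(\emptyset|\emptyset)\},
\]
is triangular with respect to inclusion of rectangles and can be inverted using only $\odot$ and $\odivide$, exactly as in \refeq{wij}; each $w(i,j)$ becomes a Laurent monomial in the $\Dscr$-values. Hence every $\Dscr$-datum is realized by a unique $w$, producing $f_w\in\bf{F}\subseteq\bf{K}$ with those values; by Stage~2 it agrees with any $f\in\bf{K}$ sharing them, so $\bf{K}\subseteq\bf{F}$. This gives (i) and (ii); and (iii) follows since $f_w(S|S')$ is an $\oplus$-sum of $\odot$-products of weights, each a Laurent monomial in $f_w|_\Dscr$. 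The key point is that the entire triangular inversion uses only $\odot$ and $\odivide$---hence no subtraction---so the argument remains valid over any commutative semiring $\frakS$ with division.
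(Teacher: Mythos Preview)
Your proof is correct and follows essentially the same route as the paper's: the grid $\Gamma_{n,n'}$, the bijection between weightings and $\Dscr$-values via the unique flow (with vertex set the rectangle $[k]\times[k']$) for each pressed double interval, and the three-stage reduction using~\refeq{genP3}, \refeq{gen-coP3}, and~\refeq{dodgson} with~\refeq{spec_dodg}. The only cosmetic differences are that you spell out the balancedness check for Stage~1 and use a lexicographic measure in Stage~2(c) where the paper uses $\max(I)+\max(I')+\min(I)+\min(I')$.
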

  \begin{proof}
~Instead of the half-grid $\Gamma_n^\triangle$ used in the flag flow case, we now
work with the grid $\Gamma=\Gamma_{n,n'}=(V,E)$ (see Fig.~\ref{fig:Gamma}). Let
us associate to each vertex $(k,k')$ of $\Gamma$ the integer rectangle
$R(k,k'):=[k]\times[k']$ and the pressed double interval
$D(k,k'):=(I,I')\in\Dscr$, where $I=[i..k]$ and $I'=[i'..k']$ (then
$\min\{i,i'\}=1$). ($D(k,k')$ is well-defined due to $|I|=|I'|$.) Observe that
$\Gamma$ has a unique $(I|I')$-flow $\phi$: its vertices are exactly those in
$R(k,k')$. Therefore, for a weighting $w:V\to\frakS$, the FG-function $f=f_w$
satisfies
   $$
   f(D(k,k'))=\odot(w(p,q)\,\colon (p,q)\in R(k,k')).
   $$
Then $w$ is expressed via the values of $f$ on $\Dscr$ as
  $$
w(k,k')=(f(D(k,k'))\odot f(D(k-1,k'-1))) \, / \,(f(D(k-1,k'))\odot
f(D(k,k'-1))),
  $$
letting $f(D(p,q)):=\underline{1}$ if $p=0$ or $q=0$. Thus, $w\mapsto f_w$ gives
a bijection between the set of weightings $w:V\to \frakS$ and $\frakS^{\Dscr^+}$,
where $\Dscr^+:=\Dscr-\{(\emptyset,\emptyset)\}$.

Next, let $f\in\bf{K}$ and consider a pair $(S,S')\in\Escr^{n,n'}$. Let
$i:=\min(S)$, $k:=\max(S)$, $i':=\min(S')$, $k':=\max(S')$. We show that
$f(S|S')$ is determined by the values of $f$ on $\Dscr$, by considering three
cases. \smallskip

(a) ~Suppose that $S$ is not an interval. Letting $X:=S-\{i,k\}$ and
$X':=S'-\{k'\}$, choosing an element $j\in[i..k]-S$, and using~\refeq{genP3},
we express $f(S|S')$ via the values of $f$ on the other five pairs occurring
there. Since for each of those five pairs $(\tilde S|\tilde S')$, the number
$\max(\tilde S)+\max(\tilde S')-\min(\tilde S)-\min(\tilde S')$ is strictly
less than $k+k'-i-i'$, we can apply induction and conclude that $f(S|S')$ is
determined by the values of $f$ on those pairs in $\Escr^{n,n'}$ where the
first term is an interval. \smallskip

(b) ~Suppose that $S$ is an interval but $S'$ is not. Acting symmetrically to
the previous case and using~\refeq{gen-coP3}, we conclude that $f(S|S')$ is
determined by the values of $f$ on double intervals. \smallskip

(c) ~Suppose that $(S,S')$ is a double interval but not a pressed one. Set
$\tilde i:=i-1$ and $\tilde i':=i'-1$; then $\tilde i\ge 1$ and $\tilde i'\ge
1$. Let $X:=S-\{k\}$ and $X':=S'-\{k'\}$ and apply~\refeq{dodgson} to $\tilde
i,k,X,\tilde i',k',X'$. Then $f(S|S')$ is expressed via the values of $f$ on
five double intervals $(I|I')$ such that $\max(I)+\max(I')+\min(I)+\min(I')$ is
strictly less than $k+k'+i+i'$. So we can apply induction. \smallskip

As a result, we obtain that any function $f\in\bf{K}$ is determined by its values
on $\Dscr$. On the other hand, we have seen that any choice of
$f_0:\Dscr^+\to\frakS$ determines a (unique) weighting $w$ in $\Gamma$, which in
turn determines a function $f\in \bf{F}$ with $f\rest{\Dscr^+}=f_0$. These
observations imply that $\bf{K}=\bf{F}$ and that $\Dscr$ is a basis for $\bf{F}$.
The Laurentness concerning $\bf{F}$ and $\Dscr$ is clear.
  \end{proof}

In light of this theorem, when we deal with a commutative semiring with
division, any SQ-relation is a consequence of the SQ-relations of three types:
the generalized P3-relation, its symmetric one, and Dodgson's type relation. In
particular, this is so when we deal with SQ-relations on minors of totally
positive matrices. \medskip

We conclude this paper with extending the SQ-relations to the functions $f$ of
minors for a wide class of matrices (where a priori $f$ need not be
flow-generated).
  \begin{prop} \label{pr:matrix-SQ}
Let $A$ be an $n\times n'$ matrix over a commutative ring $\Rscr$. Then the
function $f^A$ of minors of $A$ obeys all SQ-relations for $n,n'$.
  \end{prop}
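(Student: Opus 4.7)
The plan is to deduce the proposition from the two results already at our disposal: the sufficiency half of Theorem~\ref{tm:main} (namely Proposition~\ref{pr:balance-sq}), which says that balanced $2$-patterns induce SQ-relations for every FG-function over every commutative semiring, and the construction of the Appendix, which realises the minor function of an arbitrary matrix \emph{over a field} as an FG-function. Between these two, the only thing missing is the passage from a field to an arbitrary commutative ring $\Rscr$, and this will be handled by a standard universality argument.

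First I would fix balanced $2$-patterns $\Ascr_0,\Bscr_0\Subset\Pi_{m,m'}$ and consistent $X,Y,X',Y'$, and observe that the corresponding SQ-relation~\refeq{gen_sq}, when evaluated on the minor function $f^A$ of an $n\times n'$ matrix $A=(a_{ij})$, becomes an equality of two polynomials in the $nn'$ entries $a_{ij}$ with nonnegative integer coefficients (each minor is itself such a polynomial, and the relation is a sum of products of pairs of minors). Consequently it suffices to prove the identity for the generic matrix $\mathbf{x}=(x_{ij})$ with entries in the polynomial ring $\Zset[x_{ij}]$: once established there, one applies the ring homomorphism $\Zset[x_{ij}]\to\Rscr$ sending $x_{ij}\mapsto a_{ij}$, and the identity transfers to $A$ over any commutative ring $\Rscr$.

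Next I would pass from $\Zset[x_{ij}]$ to its field of fractions $K:=\Qset(x_{ij})$; since $\Zset[x_{ij}]\hookrightarrow K$, a polynomial identity that holds over $K$ holds over $\Zset[x_{ij}]$. Now by the Appendix, the generic matrix $\mathbf{x}$, viewed as a matrix over the field $K$, admits a pair $(G,w)$ consisting of a planar network $G$ and a weighting $w:V\to K$ such that $f^{\mathbf{x}}=f_w$, i.e.\ the minor function of $\mathbf{x}$ is literally an FG-function with values in the commutative semiring $(K,+,\cdot)$. Applying Proposition~\ref{pr:balance-sq} to the balanced pair $\Ascr_0,\Bscr_0$ with this $(G,w)$ and with semiring $\mathfrak{S}=K$ yields the desired SQ-relation for $f^{\mathbf{x}}$ over $K$, hence over $\Zset[x_{ij}]$, and hence, by the first step, for $f^A$ over the arbitrary commutative ring $\Rscr$.

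The main conceptual obstacle that this plan sidesteps is that $f^A$ need not be an FG-function when $\Rscr$ is an arbitrary commutative ring: Lindstr\"om's formula~\refeq{alg_f} produces a sum over flows with nonnegative multiplicities, so realising, say, a matrix with negative or zero-divisor entries directly by a flow model is problematic. The universality reduction removes this difficulty because it requires a flow representation only for the single ``most general'' matrix $\mathbf{x}$ over the field $K$, where the Appendix applies. The only technical point to double-check when writing the proof is that the relation, being an equality of two $\oplus$-sums (with no subtractions), really does translate, when $\mathfrak{S}$ is taken to be the ring $K$ with its usual addition and multiplication, into the polynomial identity in the $a_{ij}$ whose universal version we need; this is immediate from~\refeq{gen_sq}.
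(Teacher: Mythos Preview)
Your argument is correct but follows a genuinely different route from the paper's.  The paper first treats $\Rscr=\Rset$ and perturbs: it forms $P_{(t)}=A+tB$ with $B$ totally positive, observes that for large $t$ the matrix $P_{(t)}$ is totally positive so that $f^{P_{(t)}}$ is an FG-function and the SQ-relation holds; since the relation is then a polynomial in $t$ vanishing for all large $t$, it vanishes identically, in particular at $t=0$.  Finally the paper notes, as you do, that the resulting identity is an integer-coefficient polynomial identity in the entries $a_{ij}$, and therefore transfers to any commutative ring.

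Your route sidesteps the perturbation altogether: you go directly to the generic matrix over the field $K=\Qset(x_{ij})$, invoke the Appendix to write its minor function as an FG-function over $K$, and apply Proposition~\ref{pr:balance-sq}.  This is more direct and makes the ``universal identity'' reduction explicit from the outset.  The paper's approach, on the other hand, does not rely on the Appendix at all---only on the (softer) fact that totally positive matrices yield FG-functions---so each argument has its own economy of assumptions.

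One small slip that does not affect the logic: you say each side of the relation is a polynomial in the $a_{ij}$ ``with nonnegative integer coefficients'', but minors themselves carry signs, so the coefficients are merely integers.  The reduction to $\Zset[x_{ij}]$ needs only the latter.
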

  \begin{proof}
~Assuming $\Rscr=\Rset$, consider the parameterized matrix $P_{(t)}=A+tB$,
where $B$ is an arbitrary totally positive $n'\times n$ matrix and $t\in
\Rset$. (By the way, such a $B$ can be generated by use of the grid
$\Gamma_{n,n'}$ with a weighting in $\Rset_{>0}$.)  When $t$ is large enough,
$P_{(t)}$ becomes totally positive, and therefore the function
$f_{(t)}:=f^{P_{(t)}}$ of its minors becomes an FG-function, implying that such
an $f$ obeys all SQ-relations $S$. Substituting $f$ into $S$ gives a polynomial
$Q$ in $t$. Since $Q$ turns into zero when $t$ is large, $Q$ is the zero
polynomial. Hence $f_{(0)}=f^{A}$ obeys $S$ as well (when $\Rscr=\Rset$).
Moreover, $Q$ at $0$ is a polynomial, with integer coefficients, in the entries
$a_{ij}$ of the matrix $A$ (each being regarded as indeterminate). So it is the
zero polynomial in $a_{ij}$, and we can take an arbitrary commutative ring for
$\Rscr$, obtaining the result.
  \end{proof}

%---------------------------  Appendix
 \section*{Appendix. Matrix minors and FG-functions}

\refstepcounter{section}

In this section we prove the following

  \begin{theorem} \label{tm:minor-FG}
Let $M=(m_{ji})$ be an $n'\times n$ matrix with the entries in a field
$\frakF$. For $(I,I')\in\Escr^{n,n'}$, let $g_M(I|I')$ denote the minor of $M$
with the column set $I$ and the row set $I'$. Then $g_M$ is a flow generated
function, i.e., there exist a planar acyclic graph $G=(V,E)$ with $n$ sources
and $n'$ sinks (under the assumptions in Section~\ref{sec:intr}) and a
weighting $w:E\to\frakF$ such that $f_{G,w}=g_M$ (where $f_{G,w}$ is the
function on $\Escr^{n,n'}$ determined by $G,w$). Such $G,w$ can be explicitly
constructed.
  \end{theorem}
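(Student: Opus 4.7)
The plan is to realize $M$ as a product of very simple ``elementary'' factors, each represented by an obvious planar chip, and to obtain the desired $(G,w)$ by stacking the chips; composition of chips implements matrix multiplication of path matrices, and the minor function of a path matrix coincides with its flow function by the planar form of Lindstr\"om's lemma.

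First, using that $\frakF$ is a field, I invoke Smith normal form to write $M=PJQ$ with $P\in GL_{n'}(\frakF)$, $Q\in GL_n(\frakF)$, and $J$ the $n'\times n$ ``partial identity'' having $1$'s in positions $(1,1),\ldots,(r,r)$ (where $r=\mathrm{rank}\,M$) and zeros elsewhere. Next, I factor $P$ and $Q$ further as products of adjacent elementary matrices $I+cE_{i,i+1}$ and $I+cE_{i+1,i}$ together with diagonal matrices. Over any field these generate the full general linear group; in particular, whenever a transposition $\sigma_i=(i,i+1)$ is needed (since leading minors of $M$ may vanish), it is expressed, up to a diagonal sign-flip, as the three-factor product $(I-E_{i,i+1})(I+E_{i+1,i})(I-E_{i,i+1})$ of adjacent transvections.

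Next, I attach a planar chip to each factor $F_\ell$ in the resulting product $M=F_1F_2\cdots F_k$. The chip for $I+cE_{r,s}$ with $|r-s|=1$ consists of $n$ vertical edges $s_k\to t_k$ of weight $1$ plus one short oblique edge $s_s\to t_r$ of weight $c$; since $r$ and $s$ are neighbors, this extra edge shares endpoints with, but never crosses, any vertical edge. The chip for a diagonal matrix has only the vertical edges, weighted by the diagonal entries; the chip for $J$ has vertical edges $s_k\to t_k$ of weight $1$ for $k\le r$, with the remaining sources or sinks left isolated at the correct boundary positions. Each chip is planar and acyclic. I then stack the chips vertically, with $F_k$ at the bottom and $F_1$ at the top, identifying the sinks of each chip with the sources of the next. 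The resulting network $G$ is planar, acyclic, and has its $n$ bottom sources and $n'$ top sinks on the boundary of a convex region in the prescribed cyclic order.

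By summing weights of paths across successive chips, one sees that the $(j,i)$-entry of the path matrix of $G$ equals $(F_1F_2\cdots F_k)_{ji}=m_{ji}$, so the path matrix of $G$ is $M$ itself. Lindstr\"om's lemma in its planar form then gives, for every $(I,I')\in\Escr^{n,n'}$,
$$g_M(I|I')=\det(m_{ji})_{j\in I',\,i\in I}=\sum_{\phi\in\Phi_{I|I'}}w(\phi)=f_{G,w}(I|I'),$$
the key point being that the cyclic order of the terminals forces every vertex-disjoint flow to realize the identity matching (hence entering with sign $+1$), while ``intersecting'' contributions to the determinant cancel. The main obstacle is the factorization step: permutations cannot be implemented planarly as isolated crossings without destroying the planar structure, so each transposition $\sigma_i$ must be rewritten via the three-transvection identity above. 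Once that rewriting is in place, the rest of the argument is formal: stacking of planar chips preserves planarity and acyclicity, the stacked path matrix is the product of the chip matrices, and the minor-to-flow identity is the planar Lindstr\"om lemma.
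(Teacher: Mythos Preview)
Your argument is correct and follows the same overall strategy as the paper: factor $M$ into elementary pieces, realize each piece by a small planar ``chip'', concatenate the chips so that their path matrices multiply, and then invoke the planar Lindstr\"om lemma. The one substantive difference is in how adjacent transpositions are implemented. The paper builds a dedicated gadget for the matrix $\Pi_{r,i}$: besides the vertical edges (the two at positions $i,i+1$ carrying weight $-1$), it introduces a single extra vertex $v$ with edges $s_i\to v$, $s_{i+1}\to v$, $v\to t_i$, $v\to t_{i+1}$ of weight $1$, so that each diagonal entry is $-1+1\cdot 1=0$ while each off-diagonal entry is $1$. You instead avoid any special permutation chip by rewriting $\sigma_i$ as the product $(I-E_{i,i+1})(I+E_{i+1,i})(I-E_{i,i+1})$ times a diagonal sign, using only transvection chips and a diagonal chip. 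Both devices exploit the availability of $-1$ in $\frakF$, and both yield a planar acyclic network; your version is a bit more uniform (only two chip types) at the cost of using four chips per transposition instead of one.
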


\noindent (For technical reasons, we assign a weighting on the edges rather
than vertices.) \smallskip

 \begin{proof}
~Due to Lindstr\"om's theorem~\cite{Li}, it suffices to construct $G,w$ so as
to provide the equality $f_{G,w}(i\,|j)=m_{ji}$ for all $i\in[n]$ and
$j\in[n']$ (where $f_{G,w}(i\,|j)$ is the sum of weights $\prod_{e\in E_{\phi}}
w(e)$ over all flows (paths) $\phi$ from the source $s_i$ to the sink $t_j$).

The desired $G$ will be constructed by concatenating a sequence of (planar
acyclic) graphs. For a better visualization, we assume that each graph $G'$ we
deal with is located within a (virtual) rectangle $R'$, with horizontal and
vertical sides, so that the sources lie (in the increasing order from left to
right) on the lower horizontal side, and the sinks lie on the upper horizontal
side of $R'$.

Suppose we are given a graph $G'=(V',E')$ with sources $s'_1,\ldots,s'_{n'}$
and sinks $t'_1,\ldots,t'_r$, and a graph $G''=(V'',E'')$ with sources
$s''_1,\ldots,s''_r$ and sinks $t''_1,\ldots,t''_{n''}$. The
\emph{concatenation} $G'\circ G''$ is obtained by mounting (the rectangle of)
$G''$ on the top of $G'$ and identifying each $t'_i$ with $s''_i$ for
$i=1,\ldots,r$. This is again a planar acyclic graph in which
$s'_1,\ldots,s'_{n'}$ and $t''_1,\ldots,t''_{n''}$ are regarded as the sources
and sinks, respectively (these lie on the lower and upper sides of an
appropriate rectangle).

Consider weightings $w':E'\to\frakF$ and $w'':E''\to\frakF$. They combine into
a weighting $w$ on the edges of the graph $G:=G'\circ G''$; namely, $w$
coincides with $w'$ on $E'$, and with $w''$ on $E''$. Let $F'$ denote the
$r\times n'$ matrix whose entries are $f_{G',w'}(i\,|j)$ for $i\in [n']$ and
$j\in [r]$, called the \emph{flow matrix} for $G',w'$. Similarly, form the
$n''\times r$ flow matrix $F''$ for $G'',w''$, and the $n''\times n'$ flow
matrix $F$ for $G,w$. By the construction, any path in $G$ beginning at a
source $s'_i$ and ending at a sink $t''_j$ is the concatenation of a path from
$s'_i$ to $t'_k$ in $G'$ and a path from $s''_k$ to $t''_j$ in $G'$, for some
$k\in[r]$. Also the corresponding converse property takes place. This implies
that
   $$
   F=F''F'.
   $$

Now we use the fact that any matrix over a field can be reduced to a
``quasi-diagonal'' matrix by applying a sequence of elementary operations, such
as: a permutation of two neighboring rows or two neighboring columns; or adding
to some row (column) the previous row (resp., column) multiplied by a factor
from the field. This implies that our $n'\times n$ matrix $M$ can be
(explicitly) represented as a product $\Pscr$ of matrices over $\frakF$ such
that: one member of $\Pscr$ is an $n'\times n$ matrix $D=(d_{ji})$ with
$d_{ji}=0$ for $i\ne j$ (a ``quasi-diagonal'' matrix), and each of the other
members of $\Pscr$ is either

(i) the matrix $\Pi_{r,i}$ obtained from the identity (over $\frakF$) matrix of
order $r$ by exchanging the columns $i$ and $i+1$, where $r\in\{n,n'\}$ and $i
\in[r-1]$; or

(ii) the matrix $A^x_{r,i}$ obtained from the identity matrix of order $r$ by
inserting the element $x$ in the intersection of column $i$ and row $i+1$,
where $r\in\{n,n'\}$, ~$i\in[r-1]$ and $x\in\frakF$.
\smallskip

In light of the above discussion, it remains to devise corresponding weighted
graphs (gadgets) for the above particular matrices.
\smallskip

1. A gadget $(G,w)$ for $D$ as above is trivial: $G$ consists of sources
$s_1,\ldots,s_n$ and sinks $t_1,\ldots,t_{n'}$ which are connected by edges
$e_i=(s_i,t_i)$ of weight $w(e_i)=d_{ii}$, $i=1,\ldots,\min\{n,n'\}$.
\smallskip

2. A gadget $(G=(V,E),w)$ for $\Pi_{r,i}$ is illustrated in the left fragment
of the picture below. Here $V=\{s_1,\ldots,s_r,t_1,\ldots,t_r\}\cup \{v\}$ and
$E=\{e_j=(s_j,t_j)\colon j=1,\ldots,r\}\cup U$, where $U:=\{(s_i,v),
(s_{i+1},v), (v,t_i),(v,t_{i+1})\}$. The weights  are: $w(e_j)=1$ for
$j=1,\ldots,i-1,i+2,\ldots,r$; ~$w(e_i)=w(e_{i+1})=-1$; and $w(e)=1$ for each
$e\in U$ (where 1 means the unit in $\frakF$). Observe that each of the pairs
$(s_i,t_{i+1})$ and $(s_{i+1},t_i)$ is connected by exactly one path and this
path has weight $1\cdot 1=1$, whereas each of the pairs $(s_i,t_i)$ and
$(s_{i+1},t_{i+1})$ is connected by two paths, one of weight --1, and the other
of weight $1\cdot 1=1$. Then the flow matrix for $(G,w)$ coincides with
$\Pi_{r,i}$.

 \begin{center}
 \unitlength=1.0mm
  \begin{picture}(135,40)
                         %              left
   \put(0,-2){\begin{picture}(60,37)
 \put(0,5){\circle{2}}
 \put(15,5){\circle{2}}
 \put(35,5){\circle{2}}
 \put(50,5){\circle{2}}
 \put(0,35){\circle{2}}
 \put(15,35){\circle{2}}
 \put(35,35){\circle{2}}
 \put(50,35){\circle{2}}
 \put(25,20){\circle*{2}}
 \put(0,5){\vector(0,1){29}}
 \put(15,5){\vector(0,1){29}}
 \put(35,5){\vector(0,1){29}}
 \put(50,5){\vector(0,1){29}}
 \put(15,5){\vector(2,3){9.5}}
 \put(35,5){\vector(-2,3){9.5}}
 \put(25,20){\vector(2,3){9.5}}
 \put(25,20){\vector(-2,3){9.5}}
 \put(-3,2){$s_1$}
  \put(12,2){$s_i$}
  \put(32,2){$s_{i+1}$}
  \put(50,2){$s_r$}
 \put(0,37){$t_1$}
 \put(15,37){$t_i$}
 \put(34,37){$t_{i+1}$}
 \put(50,37){$t_r$}
 \put(27,19){$v$}
 \put(-2,20){1}
 \put(51,20){1}
 \put(18,25){1}
 \put(30.5,25){1}
 \put(19,14){1}
 \put(29.5,14){1}
 \put(11,20){--1}
 \put(36,20){--1}
 \put(5,17){\circle*{1}}
 \put(8,17){\circle*{1}}
 \put(11,17){\circle*{1}}
 \put(40,17){\circle*{1}}
 \put(43,17){\circle*{1}}
 \put(46,17){\circle*{1}}
  \end{picture}}
%
                        %              right
   \put(90,-2){\begin{picture}(60,37)
 \put(0,5){\circle{2}}
 \put(15,5){\circle{2}}
 \put(30,5){\circle{2}}
 \put(45,5){\circle{2}}
 \put(0,35){\circle{2}}
 \put(15,35){\circle{2}}
 \put(30,35){\circle{2}}
 \put(45,35){\circle{2}}
 \put(0,5){\vector(0,1){29}}
 \put(15,5){\vector(0,1){29}}
 \put(30,5){\vector(0,1){29}}
 \put(45,5){\vector(0,1){29}}
 \put(15,5){\vector(1,2){14.5}}
 \put(-3,2){$s_1$}
  \put(12,2){$s_i$}
  \put(27,2){$s_{i+1}$}
  \put(45,2){$s_r$}
 \put(0,37){$t_1$}
 \put(15,37){$t_i$}
 \put(29,37){$t_{i+1}$}
 \put(45,37){$t_r$}
 \put(-2,20){1}
 \put(46,20){1}
 \put(20,20){$x$}
 \put(13,20){1}
 \put(31,20){1}
 \put(5,17){\circle*{1}}
 \put(8,17){\circle*{1}}
 \put(11,17){\circle*{1}}
 \put(35,17){\circle*{1}}
 \put(38,17){\circle*{1}}
 \put(41,17){\circle*{1}}
  \end{picture}}
 \end{picture}
  \end{center}

3. A gadget $(G,w)$ for $A^x_{r,i}$ is illustrated in the right fragment of the
above picture. Here $G$ has $r+1$ edges: the edges $(s_j,t_j)$ of weight 1 for
$j=1,\ldots,r$, and the edge $(s_i,t_{i+1})$ of weight $x$. Then the flow
matrix for $(G,w)$ coincides with $A^x_{r,i}$.

This completes the proof of the theorem.
\end{proof}

 \medskip
\noindent \textbf{Acknowledgement.} We are thankful to C.~Krattenthaler for
pointing out to us papers~\cite{Flm,FK}. Also we thank the anonymous referees
who gave a meticulous analysis of the original text, revealed inaccuracies
there, and suggested many stylistic improvements.

\end{document}